\documentclass{article}
\usepackage[letterpaper, portrait, margin=2.5cm]{geometry}

\usepackage{verbatim}
\usepackage[utf8]{inputenc}
\usepackage{amsmath,amssymb,amsthm}
\usepackage{tikz-cd}
\usepackage{bbm}

\usepackage{comment}
\usepackage{stmaryrd}
\usepackage[new]{old-arrows}
\usepackage{graphicx}
\graphicspath{ {./Images/} }
\usepackage{todonotes}

\usepackage{multirow}

\usepackage [english]{babel}
\usepackage [autostyle, english = american]{csquotes}
\MakeOuterQuote{"}

\newcommand{\catname}[1]{{\normalfont\textbf{#1}}}

\newcommand{\Set}{\catname{Set}}

\newcommand{\RR}{\mathbb R}
\newcommand{\ZZ}{\mathbb Z}

\newcommand{\CC}{\mathbb C}

\newcommand{\PP}{\mathbb P}
\newcommand{\VV}{\mathbb V}

\newcommand{\AAA}{\mathbb A}

\newcommand{\mcc}{\mathcal C}

\newcommand{\mch}{\mathcal H}

\newcommand{\mcm}{\mathcal M}

\newcommand{\mcs}{\mathcal S}
\newcommand{\mct}{\mathcal T}

\newcommand{\mfc}{\mathfrak C}

\newcommand{\lbm}{\left[ \begin{matrix}}
\newcommand{\rem}{\end{matrix} \right]}
\newcommand{\SL}{\sum\limits}
\newcommand{\PL}{\prod\limits}

\newcommand{\ra}{\rightarrow}

\newcommand{\Hom}{\text{Hom}}

\newcommand{\lp}{\left(}
\newcommand{\rp}{\right)}
\newcommand{\lb}{\left\{}
\newcommand{\rb}{\right\}}

\newcommand{\lav}{\left|}
\newcommand{\rav}{\right|}
\newcommand{\inv}{^{-1}}
\newcommand{\st}{ \ | \ }

\newcommand{\wt}{\widetilde} 

\newcommand{\proj}{\text{Proj}}
\newcommand{\p}{\partial}

\newcommand{\RD}{\text{RD}}

\newcommand{\trdeg}{\text{tr.deg}}
\newcommand{\spec}{\text{Spec}}

\newcommand{\gr}{\text{Gr}}
\newcommand{\GR}{\mathcal{G}\text{r}}

\newcommand{\PGL}{\text{PGL}}
\newcommand{\GL}{\text{GL}}

\theoremstyle{definition}
\newtheorem{definition}{Definition}[section]
\newtheorem{remark}[definition]{Remark}
\newtheorem{example}[definition]{Example}

\newtheorem{prop}[definition]{Proposition}
\newtheorem{lemma}[definition]{Lemma}
\newtheorem{theorem}[definition]{Theorem}
\newtheorem{corollary}[definition]{Corollary}
\newtheorem{problem}[definition]{Problem}

\title{Upper Bounds on Resolvent Degree and Its Growth Rate}
\author{Alexander J. Sutherland\footnote{This work was supported in part by the National Science Foundation under Grant No. DMS-1944862.}}
\date{\today}

\begin{document}

\maketitle

\begin{abstract}
	For each $n$, let $\RD(n)$ denote the minimum $d$ for which there exists a formula for the general polynomial of degree $n$ in algebraic functions of at most $d$ variables. In 1945, Segre called for a better understanding of the large $n$ behavior of $\RD(n)$. In this paper, we provide improved thresholds for upper bounds on $\RD(n)$. Our techniques build upon classical algebraic geometry to provide new upper bounds for small $n$ and, in doing so, fix gaps in the proofs of A. Wiman and G.N. Chebotarev in \cite{Wiman1927} and \cite{Chebotarev1954}. 
\end{abstract}

\tableofcontents

\section{Introduction}\label{sec:Introduction}

Consider the following classical problem:

\begin{problem}\label{prob:Determine Fomulas for Generic Polynomials}
 Given a general polynomial $z^n + a_1z^{n-1} + \cdots + a_{n-1}z + a_n$, determine a root of the polynomial in terms of $a_1,\dotsc,a_n$ in the simplest manner possible.
\end{problem}

Recent work on this problem has been cast within the framework of resolvent degree (as in \cite{FarbWolfson2019}). Informally, $\RD(n)$ is the minimal $d$ for which there exists a formula for the roots of the generic degree $n$ polynomial using algebraic functions of at most $d$ variables (see Definitions \ref{def:ResolventDegreeFieldExtensions} and \ref{def:ResolventDegreeBranchedCovers}). While there are currently no non-trivial lower bounds on $\RD(n)$ and it is possible in theory that $\RD(n)=1$ for all $n$, there is a history of determining upper bounds on $\RD(n)$. This includes the work of Bring \cite{Bring1786}, Hamilton \cite{Hamilton1836}, Sylvester \cite{Sylvester1887,SylvesterHammond1887,SylvesterHammond1888}, Klein \cite{Klein1884,Klein1887,Klein1905}, and Hilbert (\cite{Hilbert1927}).\footnote{Chen, He, and McKay provide a modern English translation of \cite{Bring1786} (originally in Latin) in \cite{ChenHeMcKay2017} and the author provides a modern English translation of \cite{Klein1905} (originally in German) in \cite{Sutherland2019}.} Indeed, Hamilton proved that $\RD(6) \leq 2$, $\RD(7) \leq 3$, and $\RD(8) \leq 4$. Hilbert's sextic conjecture, Hilbert's 13th problem, and Hilbert's octic conjecture, respectively, predict that these upper bounds are sharp. In Section 3 of \cite{Segre1945}, Segre indicated the following problem:

\begin{problem}\label{prob:Segre}  \textbf{(Segre, 1945)}\\
	Determine the large $n$ behavior of $\RD(n)$.
\end{problem}

The current upper bounds on $\RD(n)$ are found in \cite{Wolfson2021}, where Wolfson introduces a function $F(m)$ such that $\RD(n) \leq n-m$ for all $n \geq F(m)$ (Definition 5.4 and Theorem 5.6, \cite{Wolfson2021}). In this paper, we construct a similar function $G(m)$. For $21 \leq n \leq 3,632,428,800$, we use methods which build upon the work of Segre. By using a theorem of Debarre and Manivel (see Theorem \ref{thm:DebarreManivel}, or Theorem 2.1 of \cite{DebarreManivel1998}), we are able to streamline Wolfson's method to obtain better thresholds on upper bounds on $\RD(n)$ for $n \geq 348,489,068,134$. For $3,632,428,801 \leq n \leq 348,489,068,133$, we obtain the same bounds as Wolfson's method. More specifically, we prove the following results:

\begin{theorem}\label{thm:Key Properties of G(m)} \textbf{(Key Properties of $G(m)$)}\\
	The function $G(m)$ of Definition \ref{def:Definition of G} has the following properties:
	\begin{enumerate}
		\item For each $m \geq 1$ and $n \geq G(m)$, $\RD(n) \leq n-m$.
		\item For each $d \geq 4$, $G(2d^2+7d+6) \leq \frac{(2d^2+7d+5)!}{d!}$. In particular, for $d \geq 4$ and $n \geq \frac{(2d^2+7d+5)!}{d!}$,
		\begin{equation*}
			\RD(n) \leq n-2d^2-7d-6.
		\end{equation*}
		\item For each $m \geq 1$, $G(m) \leq F(m)$ with equality only when $m \in \lb 1,2,3,4,5,15,16 \rb$ and
		\begin{equation*}
			\lim\limits_{m \ra \infty} \frac{F(m)}{G(m)} = \infty.
		\end{equation*}		 
	\end{enumerate}	
\end{theorem}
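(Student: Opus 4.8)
For part (1) the plan is to argue by induction on $m$, using the recursive construction of $G(m)$ in Definition \ref{def:Definition of G}, which combines the classical reductions of Tschirnhaus, Wiman, Chebotarev, and Segre with the linear-subspace existence theorem of Debarre and Manivel (Theorem \ref{thm:DebarreManivel}) in place of the cruder existence inputs used classically. The base cases are the small-$m$ estimates established earlier in the paper --- including the corrected Bring/Hamilton/Hilbert/Wiman/Chebotarev bounds --- together with the trivial $\RD(n)\le n$. For the inductive step I would invoke the reduction lemma underpinning the definition of $G$: after a suitable Tschirnhaus transformation, solving the general degree-$n$ polynomial is reduced, modulo an accessory irrationality of controlled resolvent degree, to producing a point on the complete intersection $Z_{n,k}\subseteq\mathbb{P}^{n-1}$ cut out by the vanishing of the power sums $p_2,\dots,p_k$ (forms of degrees $2,3,\dots,k$); once $n$ is large enough, Theorem \ref{thm:DebarreManivel} guarantees that $Z_{n,k}$ contains a linear subspace of dimension $\ell$, and restricting the family to it leaves a problem with strictly fewer essential parameters, to which the inductive hypothesis applies. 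The value $G(m)$ is defined to be the smallest $n$ for which every numerical hypothesis in this chain of reductions holds, so $\RD(n)\le n-m$ for $n\ge G(m)$. The point needing care is that the recursion must \emph{close}: one must check that $n\ge G(m)$ already forces every intermediate degree appearing in the chain to lie above its own threshold, and this is exactly where the precise shape of Definition \ref{def:Definition of G} is used.

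For part (2) I would specialize to $m=2d^2+7d+6=(2d+3)(d+2)$ and bound the recursion in closed form. This value is distinguished because it is exactly the range of multidegrees in which the Debarre--Manivel inequality governing the existence of the needed linear subspace can be solved explicitly, with the dimension of the subspace essentially proportional to that of the ambient space; consequently $G((2d+3)(d+2))$ is controlled by the size of the ambient projective space one must pass through, which works out to at most $(2d^2+7d+5)!/d! = ((d+1)(2d+5))!/d!$. I expect this to reduce to a lengthy but elementary binomial-coefficient estimate, together with a check that $d\ge 4$ is the range in which it is valid (for smaller $d$ the relevant complete intersection lies in an exceptional range of Theorem \ref{thm:DebarreManivel}, or the classical input is already as strong). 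The displayed bound $\RD(n)\le n-2d^2-7d-6$ for $n\ge (2d^2+7d+5)!/d!$ is then immediate from part (1).

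For part (3) I would proceed in three steps. First, $G(m)\le F(m)$: Wolfson's $F(m)$ rests on the same Tschirnhaus scaffolding but with a weaker input for the existence of linear subspaces, so a term-by-term comparison of the two recursions, using that each is monotone in its parameters, yields the inequality. Second, the equality set: $G(m)=F(m)$ exactly when the Debarre--Manivel input and Wolfson's input produce the same threshold at every stage, and I would identify these $m$ by direct computation of $F$ and $G$ over a finite initial range and then prove a strict-gap lemma --- that the Debarre--Manivel input is genuinely stronger for all $k$ beyond a small bound and that this strictness propagates through the recursion --- to rule out coincidences for $m\ge 17$, leaving exactly $\{1,2,3,4,5,15,16\}$. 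Third, the limit: by part (2), $G(m)\le (m-1)!/d!$ with $d\sim\sqrt{m/2}$, whereas Wolfson's $F(m)$ grows at least factorially in $m$, so $F(m)/G(m)\ge d!\to\infty$. I expect the main obstacle to be the strict-gap lemma, since it requires controlling the \emph{exact} values of both functions over an infinite range rather than merely asymptotically.
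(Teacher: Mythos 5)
Your proposal assembles the right ingredients (Tschirnhaus intersections, Debarre--Manivel, explicit small cases, a growth-rate comparison with $F$), but the mechanisms you describe diverge from what is needed, and in parts (2) and (3) the divergence is a genuine gap. For part (1), note first that $G$ is not defined by a recursion and the proof is not an induction on $m$: for $m\ge 15$ one shows $\tau_{1,\dotsc,d}$ is semi-stable (Proposition \ref{prop:Semi-Stability of Tschirnhaus Complete Intersections}, which you omit and which is required before Theorem \ref{thm:DebarreManivel} can be applied to the \emph{moduli} space), produces an $(m-d-1)$-plane $\Lambda$ over an extension of resolvent degree at most $\dim\mcm(2,\dotsc,d;\vartheta(d,m-d-1))$, and then solves a single polynomial of degree $(m-1)!/d!$ to locate a point of $\Lambda\cap\tau_{1,\dotsc,m-1}$; Remark \ref{rem:RDBoundsFromTschirnhausTransformations} then gives $\RD(n)\le n-m$ directly. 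There is no step where "the inductive hypothesis applies to a problem with fewer essential parameters." The cases $6\le m\le 14$ are also not inherited base cases: they require the iterated polar cone arguments of Theorems \ref{thm:The n-6 Bound} and \ref{thm:The n-7,...,n-14 Bounds}, which beat the moduli method in that range.

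The concrete gaps are these. In part (2), your explanation of the threshold is wrong: $(2d^2+7d+5)!/d!=(m-1)!/d!$ is the \emph{degree} of the zero-dimensional residual intersection $\Lambda\cap\tau_{1,\dotsc,m-1}$, not "the size of the ambient projective space" (the ambient dimension $\vartheta(d,m-d-1)$ is only polynomial in $m$). The condition $m\ge 2d^2+7d+6$ is the point past which the factorial degree term dominates the moduli-space dimension, so that $\varphi(d+1,m-d-2)=(m-1)!/(d+1)!<(m-1)!/d!$ (Lemma \ref{lem:TheVarphiCondition}); the $2d^2$ comes from a Stirling comparison of $(m-1)!$ with $\binom{m-1+\binom{m}{d+1}}{d+1}$, not from solving the Debarre--Manivel inequality in a special range. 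In part (3), the limit does not follow from part (2) plus the unproved assertion that $F(m)$ "grows at least factorially." The substantive content is the \emph{lower} bound $F(m)\ge (m-1)!/d!$, which requires showing the minimum defining $F$ is attained where the degree term dominates, i.e. $\Phi(d,m-d-1)<\Phi(d',m-d'-1)$ for $d'>d$; this forces one to bound $\dim\mcm\lp 3;\psi(d',\cdot)_{d'-2}\rp$ from \emph{below}, hence to bound the iterated $\psi(d,k)_i$ from below, which is exactly what the auxiliary functions $\Omega$, $\omega(d,i)$ and Lemma \ref{lem:BoundingMfcd} accomplish in Proposition \ref{prop:TheOmegaCondition}. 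Your "strict-gap lemma" is precisely this missing piece, and the proposal gives no way to carry it out; moreover the finite verification extends to $m=59$ with a separate argument for $60\le m\le 377$ before the asymptotic estimate applies.
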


The first statement of Theorem \ref{thm:Key Properties of G(m)} is Theorem \ref{thm:UpperBoundsonRD(n)} and, in light of this statement, we refer to $F(m)$ and $G(m)$ as ``bounding functions'' for $\RD(n)$. Hamilton was the first to provide a bounding function for $\RD(n)$ (see \cite{OEIS2021}). Brauer greatly improved Hamilton's result when he showed that $\RD(n) \leq n-m$ for $n \geq (m-1)!+1$ in \cite{Brauer1975}. Wolfson improved upon these bounds with his function $F(m)$, but did not provide an upper bound on $F(m)$ in terms of elementary functions. The construction of $G(m)$ is simpler than $F(m)$, but is not as explicit as Brauer's condition. The second statement of Theorem \ref{thm:Key Properties of G(m)} gives an upper bound on $\RD(n)$ and its growth rate using elementary functions; it is Theorem \ref{thm:UpperBoundOnGrowthRateofRD(n)}. 

The third statement of Theorem \ref{thm:Key Properties of G(m)} shows that $G(m)$ provides better asymptotic bounds than $F(m)$; it is Theorem \ref{thm:ComparisonWithF}. Note that the inequalities when $1 \leq m \leq 4$ (which imply $\RD(n)=1$ for $2 \leq n \leq 5$) are due to the classical solutions of general polynomials of low degree. Recall that Hilbert's Octic Conjecture predicts that $\RD(8)=4$ and, if true, would imply that the bounds $G(5) = F(5) = 9$ cannot be improved. We expect that the general thresholds obtained by $G(m)$ are not sharp (including the cases of $G(15)=F(15)$ and $G(16)=F(16)$).

\paragraph{Historical Remarks}
Many of the approaches above rely on the theory of Tschirnhaus transformations. In \cite{Wolfson2021}, Wolfson consolidates the history of Tschirnhaus transformations and establishes upper bounds on $\RD(n)$ by determining points of Tschirnhaus complete intersections over field extensions of bounded resolvent degree. In \cite{Wiman1927}, Wiman used these methods to give an argument that implied $\RD(n) \leq n-5$ for $n \geq 9$ and Chebotarev\footnote{Note that this is G.N. Chebotarev, son of Nikolai Chebotarev.} extended Wiman's idea to argue that $\RD(n) \leq n-6$ for $n \geq 21$ in \cite{Chebotarev1954}. Dixmier noted in \cite{Dixmier1993}, however, that Wiman's argument has a gap and provided an algebraic argument to conclude that $\RD(n) \leq n-5$ for $n \geq 9$. Moreover, the gap found in Wiman's argument is also present in the argument of Chebotarev.

Building on the work of Segre, we introduce (iterated) polar cones for hypersurfaces and intersections of hypersurfaces, which we use to determine linear subvarieties on intersections of hypersurfaces over extensions of bounded resolvent degree. In doing so, we fix the gaps in the arguments of Chebotarev and provide a geometric fix for the arguments of Wiman.

\paragraph{Outline of the Paper}

In Section \ref{sec:PolarCones}, we use the language of polar cones (respectively, iterated polar cones) and highlight their connection to lines (respectively, $k$-planes) on intersections of hypersurfaces. In Section \ref{sec:NewUpperBoundsOnRD(n)}, we use techniques involving iterated polar cones and moduli spaces to construct a function $G(m)$ which yields the upper bounds on $\RD(n)$. In Section \ref{sec:ComparisonWithPriorBounds}, we compare these new bounds to those of Wolfson. In Appendix \ref{subsec:Appendix - Explicit Bounds}, we give explicit values of $G(m)$ for small values of $m$. In Appendix \ref{subsec:Appendix - Explicit Approximations of F(m)/G(m)}, we provide additional data on the ratio $F(m)/G(m)$. In Appendix \ref{subsec:Appendix-ProofOfTechnicalLemma}, we provide the proof of a technical lemma from Section \ref{sec:PolarCones}.

\paragraph{Conventions}

\begin{enumerate}
\item We restrict to fields $K$ which are finitely generated $\CC$-algebras. The interested reader could instead fix an arbitrary algebraically closed field $F$ of characteristic zero (in lieu of $\CC$) and the statements (relative to $F$) would hold.

\item For varieties, we follow the conventions of \cite{Harris2010}. Namely, we define a projective (respectively, affine) variety to be a closed algebraic set in $\PP_K^n$ (respectively, $\AAA_K^n$). When we simply say variety, we mean a quasi-projective variety. In particular, we do not assume that varieties are irreducible.

\item For a collection of homogeneous polynomials $f_1,\dotsc,f_s \in K[x_0,\dotsc,x_r]$, we write $\VV(f_1,\dotsc,f_s)$ for the subvariety of $\PP_K^r$ determined by the conditions $f_1 = \cdots = f_s = 0$.

\item Given a subvariety $V \subseteq \PP_K^r$, we use the notation $V(K)$ to refer to the set of $K$-rational points of $V$.

\item We write $K_n$ to mean $\CC(a_1,\dotsc,a_n)$, a purely transcendental extension of $\CC$ with transcendence basis $a_1,\dotsc,a_n$.

\item We write $\gr(k,r)$ to denote the Grassmannian of $k$-dimensional subspaces of $\CC^r$ and $\GR(k,r)$ for the space of $k$-planes in $\PP_\CC^r$.  In particular, $\GR(k,r) \cong \gr(k+1,r+1)$.

\item Given a polynomial ring $K[x_0,\dotsc,x_r]$ over a field $K$, we write
\begin{itemize}
\item $K[x_0,\dotsc,x_r]_{(d)}$ for the vector space of degree $d$ polynomials,
\item $K[x_0,\dotsc,x_r]_{(d)}^\vee$ for its dual space,
\item $S^*\lp K[x_0,\dotsc,x_r]_{(d)}^\vee \rp$ for the corresponding free commutative $K$-algebra, and
\item $S^*\lp K[x_0,\dotsc,x_r]_{(d)}^\vee \rp^{\GL(K,r+1)}$ for the associated graded ring of $\GL(K,r+1)$-invariants.
\end{itemize}

\item We write $\log$ to mean the base $e$ logarithm.
\end{enumerate}

With respect to convention 2, note that for generic choices of $f_1,\dotsc,f_s$, $\VV(f_1,\dotsc,f_s)$ is a complete intersection. However, there are examples of such choices which are not complete intersections, such as the twisted cubic curve. Our methods apply to all such choices and to avoid confusion, we generally use the terminology ``intersection of hypersurfaces'' instead of ``complete intersection'' to refer to a variety of the form $\VV(f_1,\dotsc,f_s)$. Additionally, we note that conventions 5 and 6 are primarily for Definition \ref{def:ParameterAndModuliSpacesOfHypersurfacesWithkPlanes} and Remark \ref{rem:Parameter and Moduli Spaces as Schemes}, respectively.

\paragraph{Acknowledgements}

I thank Jesse Wolfson for his generous support. Next, I thank Joshua Jordan for several key conversations and general support. I thank Benson Farb, Hannah Knight, Curt McMullen, and Zinovy Reichstein for helpful comments on a draft. Finally, I thank Kenneth Ascher, Claudio G\'{o}mez-Gonz\'{a}les, and Roman Vershynin for helpful conversations.

%%%%%%%%%%%%%%%%%%%%%%%%%%
%%%%%%%%%%%%%%%%%%%%%%%%%%
%%%%%%%%%%%%%%%%%%%%%%%%%%
%%%%%%%%%%%%%%%%%%%%%%%%%%
%%%%%%%%%%%%%%%%%%%%%%%%%%

\section{Polar Cones}\label{sec:PolarCones}

\subsection{An Introduction to Polar Cones}\label{subsec:An Introduction to Polar Cones}

In \cite{Segre1945}, Segre refers to a ``well-known fact'' which is the main ingredient for many of his proofs. This fact may no longer be as well-known as it once was. Definitions \ref{def:PolarsOfAPolynomial}, \ref{def:PolarsOfAHypersurface}, and \ref{def:PolarConeOfAHypersurface} are necessary to state the fact, which we include as Lemma \ref{lem:BertinisLemmaForHypersurfaces} and refer to as Bertini's Lemma (for Hypersurfaces), since the reference Segre gives for this fact is \cite{Bertini1923}.

\begin{definition}\label{def:PolarsOfAPolynomial} \textbf{(Polars of a Polynomial)}\\
 Let $f \in K[x_0,\dotsc,x_r]$ be a homogeneous polynomial of degree $d$ and $P \in \PP^r(K)$. For every $n \in \ZZ_{\geq 1}$, let
 \begin{align*}
 	&[n] = \lb 0,1,\dotsc,n \rb,\\
 	&[n]^* = \lb 1,\dotsc,n \rb.
\end{align*}

\noindent
For each $0 \leq k \leq d-1$, we set
\begin{align*}
	I_k := \lb (i_0,\dotsc,i_r) \in \ZZ_{\geq 0}^{r+1} \st i_0+\cdots+i_r=k \rb.
\end{align*}

We have $r+1$ first partial derivatives $\frac{\p}{\p x_0},\dotsc,\frac{\p}{\p x_r}$. To obtain an (ordered) $k^{th}$ partial derivative of $f$, we make $k$ successive choices of first partial derivatives. Hence, the set of all (ordered) $k^{th}$ partial derivatives is indexed by $I_k^* = \Hom_{\Set}([k]^*,[r])$. For any $\iota \in I_k^*$ and $0 \leq j \leq r$, we set
\begin{align*}
	|\iota|(j) := \lav \iota\inv(j) \rav
\end{align*}

\noindent
and note that $(|\iota|(0),\dotsc,|\iota|(r)) \in I_k$. Additionally, we use the shorthand
\begin{align*}
	 \p_0^{j_0} \cdots \p_l^{j_l} f := \frac{\p^{j_0+\cdots+j_l} f}{\p x_0^{j_0} \cdots \p x_l^{j_l}}.
\end{align*}

\noindent
With this notation set, the \textbf{$k^{th}$ polar of $f$ at $P$} is the homogeneous polynomial of degree $d-k$
 \begin{align*}
 	t(k,f,P)(y_0,\dotsc,y_r) := \SL_{\iota \in I_{d-k}^*} \lp \p_0^{|\iota|(0)} \cdots \p_r^{|\iota|(r)} f \rp \biggr\rvert_P \  y_0^{|\iota|(0)}\cdots y_r^{|\iota(r)|}.
 \end{align*}
\end{definition}

\begin{definition}\label{def:PolarsOfAHypersurface} \textbf{(Polars of a Hypersurface)}\\
Let $V=\VV(f)$ be a hypersurface in $\PP_K^r$ and $P \in V(K)$. The \textbf{$k^{th}$ polar of $V$ at $P$} is
\begin{align*}
	T(k,f,P) := \VV\lp t(k,f,P) \rp \subseteq \PP_K^r.
\end{align*}
\end{definition}

\begin{example}\label{rem:ExamplesOfPolars}
Using the conventions of Definition \ref{def:PolarsOfAHypersurface}, we observe that
\begin{align*}
	t(0,f,P)(x_0,\dotsc,x_r) &= f(x_0,\dotsc,x_r),\\
	t(d,f,P)(x_0,\dotsc,x_r) &= f(P),
\end{align*}

\noindent
so the $0^{th}$ polar of $\VV(f)$ at $P$ is $T(0,f,P) = V$ for all $P$ and the $d^{th}$ polar of $\VV(f)$ at $P$ is
\begin{align*}
	T(d,f,P) =
	\begin{cases}
	\PP_K^r, &\text{if } P \in \VV(f),\\
	\emptyset, &\text{if } P \not\in \VV(f).
	\end{cases}
\end{align*}

Moreover, if $V$ is smooth at $P$, then $T(d-1,f,P)$ is the tangent hyperplane of $V$ at $P$; this motivates the use of $T(k,f,P)$ for polar hypersurfaces and hence $t(k,f,P)$ for their defining polynomials.
\end{example}

\begin{definition}\label{def:PolarConeOfAHypersurface} \textbf{(Polar Cone of a Hypersurface)}\\
Let $V = \VV(f)$ be a degree $d$ hypersurface in $\PP_K^r$ and $P \in V(K)$. The \textbf{(first) polar cone} of $V$ at $P$ is
\begin{align*}
	\mcc(V;P) := \bigcap\limits_{k=0}^{d-1} T(k,f,P).
\end{align*}
\end{definition}

We can now state Lemma \ref{lem:BertinisLemmaForHypersurfaces}, which motivates the terminology of the previous definition.

\begin{lemma}\label{lem:BertinisLemmaForHypersurfaces} \textbf{(Bertini's Lemma for Hypersurfaces)}\footnote{See Chapter 8 of \cite{Bertini1923} for the reference given in \cite{Segre1945}.}\\
	Let $V = \VV(f)$ be a hypersurface in $\PP_K^r$ and $P \in V(K)$. Then, $\mcc(V;P)$ is a cone with vertex $P$ which is contained in $V$.
\end{lemma}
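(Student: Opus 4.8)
The plan is to show two things: first, that $\mcc(V;P)$ is a cone with vertex $P$, and second, that $\mcc(V;P) \subseteq V$. The second part is nearly immediate from the definitions, since $T(0,f,P) = V$ by Example \ref{rem:ExamplesOfPolars}, and $\mcc(V;P)$ is an intersection that includes $T(0,f,P)$ as one of its factors. So the real content is the "cone with vertex $P$" assertion, and for this the key is the classical polarization identity for homogeneous polynomials.

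First I would recall (or prove) the \textbf{Taylor/polarization expansion}: for a homogeneous polynomial $f$ of degree $d$ and points $P, Q \in \PP^r$, one has
\begin{equation*}
	f(\lambda P + \mu Q) = \SL_{k=0}^{d} \binom{d}{k} \lambda^{k} \mu^{d-k} \, t(k,f,P)(Q)
\end{equation*}
up to the combinatorial normalization matching the definition of $t(k,f,P)$ given in Definition \ref{def:PolarsOfAPolynomial} (the multinomial coefficients from the repeated partial derivatives are exactly what make this work). The point is that $t(k,f,P)(Q)$ is, up to a constant, the coefficient of $\lambda^k \mu^{d-k}$ in $f(\lambda P + \mu Q)$ viewed as a binary form in $\lambda,\mu$.

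Now I would argue as follows. Suppose $Q \in \mcc(V;P)$, i.e. $t(k,f,P)(Q) = 0$ for all $0 \le k \le d-1$. Plugging into the expansion above, every term with $k \le d-1$ vanishes, leaving $f(\lambda P + \mu Q) = \binom{d}{d}\lambda^{d} t(d,f,P)(Q) = \lambda^d f(P) = 0$ since $P \in V(K)$. Hence the entire line $\overline{PQ}$ (i.e. $\{[\lambda P + \mu Q]\}$) lies in $V = \VV(f)$. This shows that together with any of its points $Q$, the polar cone contains the whole line through $P$ and $Q$ — which is precisely the statement that $\mcc(V;P)$ is a cone with vertex $P$. (One should also note $P$ itself lies in $\mcc(V;P)$: indeed $t(k,f,P)(P)$ is a multiple of $f(P) = 0$ for $k \le d-1$ by Euler's identity, or directly from the expansion with $Q = P$.) Finally, $\mcc(V;P) \subseteq V$ as observed, completing the proof.

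The \textbf{main obstacle} I anticipate is purely bookkeeping: verifying that the coefficients in Definition \ref{def:PolarsOfAPolynomial}'s sum over $\iota \in I_{d-k}^*$ (indexed by ordered tuples of derivative choices) assemble correctly into the binomial/multinomial coefficients appearing in the polarization identity — i.e. that the author's unnormalized $t(k,f,P)$ agrees, up to the harmless nonzero scalar $k!$ or $\binom{d}{k}$ that doesn't affect vanishing, with the coefficient of $\lambda^k\mu^{d-k}$ in $f(\lambda P + \mu Q)$. This is a standard fact about iterated directional derivatives of homogeneous polynomials, and since only the \emph{vanishing locus} of $t(k,f,P)$ matters for the statement, the precise constants are irrelevant — I would state the polarization identity cleanly and relegate the coefficient-matching to a remark or cite it as classical.
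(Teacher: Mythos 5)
Your proposal is correct and follows essentially the same route as the paper: the paper deduces the lemma directly from its Technical Lemma (Lemma \ref{lem:TechnicalLemma}), which is exactly the polarization expansion of $f$ along the line $L(P,Q)$ that you invoke, with the coefficient bookkeeping likewise deferred to an appendix. Your observations that $T(0,f,P)=V$ gives the containment and that only the vanishing locus (not the normalizing constants) matters are both consistent with the paper's argument.
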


\begin{example}\label{ex:PolarConeForCubicSurfaces} \textbf{(Lines on Cubic Surfaces)}\\
Let $V \subseteq \PP_K^3$ be a smooth cubic surface. If $K=\overline{K}$, then $V$ contains exactly 27 lines $L_1,\dotsc,L_{27}$. If $P \in V(K)$ lies on exactly one line $L$ (respectively, exactly two lines $L_1, L_2$), then $\mcc(V;P) = L$ (respectively, $\mcc(V;P) = L_1 \cup L_2$). Most points $Q \in V(K)$, however, do not lie on any line and in such a case $\mcc(V;Q)$ is the point $Q$ with multiplicity 6. We will generally consider fields $K$ which are not algebraically closed, in which case we may need to pass to an extension of $K$ to obtain a line. 
\end{example}

\begin{remark}
	For an alternative modern perspective on Definitions \ref{def:PolarsOfAPolynomial} and \ref{def:PolarsOfAHypersurface}, Remark \ref{rem:ExamplesOfPolars}, and Lemma \ref{lem:TechnicalLemma}, see  p.5-6 of \cite{Dolgachev2012} - in particular, equations 1.8 through 1.11. Note that Lemma \ref{lem:BertinisLemmaForHypersurfaces} follows directly from the following technical lemma. The proof of Lemma \ref{lem:TechnicalLemma} can be found in Appendix \ref{subsec:Appendix-ProofOfTechnicalLemma}; it is not exceedingly complicated, but is notationally cumbersome.	
\end{remark}

\begin{lemma}\label{lem:TechnicalLemma} \textbf{(Technical Lemma)}\\
	Let $P,Q \in \PP^r(K)$ and $f \in K[x_0,\dotsc,x_r]$ be a homogeneous polynomial of degree $d$. Applying a projective change of coordinates as necessary, we assume that 
	\begin{align*}
		P &= [1:p_1:\cdots:p_r],\\
		Q &= [1:q_1:\cdots:q_r],
	\end{align*}		
	
	\noindent
	so that the line determined by $P$ and $Q$ is
	\begin{align*}
		L(P,Q)(K) = \lb [1:\lambda p_1 + \mu q_1:\cdots:\lambda p_r + \mu q_r] \st [\lambda:\mu] \in \PP^1(K) \rb.
	\end{align*}
	
	\noindent
	For any point $R_{\lambda:\mu} = [1:\lambda p_1 + \mu q_1:\cdots:\lambda p_r + \mu q_r] \in L(P,Q)(K)$,
	\begin{equation}\label{eq:TechnicalLemmaEquation}
		f\lp R_{\lambda:\mu} \rp = f(\lambda P) + f(\mu Q) + \SL_{k=1}^{d-1} \frac{1}{k!} t(d-k,f,\lambda P)(\mu Q).
	\end{equation}
\end{lemma}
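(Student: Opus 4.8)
The plan is to expand $f(R_{\lambda:\mu})$ directly using the multivariate Taylor / multinomial expansion and then match terms with the polars defined in Definition \ref{def:PolarsOfAPolynomial}. Write $R_{\lambda:\mu} = \lambda P + \mu Q$ as vectors in $\AAA^{r+1}_K$ (using the chosen affine representatives with leading coordinate $1$, so that $\lambda P + \mu Q$ has leading coordinate $\lambda + \mu$; the homogeneity of $f$ will make the final formula independent of this choice). Since $f$ is homogeneous of degree $d$, it is a polynomial in $x_0, \dotsc, x_r$, and I would apply the exact Taylor expansion of $f$ about the point $\lambda P$ evaluated at the displacement $\mu Q$, namely
\begin{equation*}
	f(\lambda P + \mu Q) = \SL_{j=0}^{d} \frac{1}{j!} \SL_{\iota \in I_j^*} \lp \p_0^{|\iota|(0)} \cdots \p_r^{|\iota|(r)} f \rp \biggr\rvert_{\lambda P} \ (\mu q_0)^{|\iota|(0)} \cdots (\mu q_r)^{|\iota|(r)},
\end{equation*}
which is exact (not just a truncation) because $f$ has degree $d$ so all partials of order $> d$ vanish. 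The main bookkeeping step is to recognize the inner sum over $\iota \in I_j^*$ as exactly $t(d-k, f, \lambda P)(\mu Q)$ when $j = d-k$: comparing with Definition \ref{def:PolarsOfAPolynomial}, the $k^{th}$ polar $t(k,f,P)(y)$ is precisely $\SL_{\iota \in I_{d-k}^*}(\p^{|\iota|} f)|_P \, y^{|\iota|}$, so setting $k' = d-j$ gives the $j$-th order term equal to $\frac{1}{j!} t(d-j, f, \lambda P)(\mu Q)$.

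Next I would isolate the two extreme terms of the sum. The $j=0$ term is $t(d,f,\lambda P)(\mu Q) = f(\lambda P)$ by Example \ref{rem:ExamplesOfPolars} (the $d^{th}$ polar is the constant $f$ evaluated at the base point, here $\lambda P$). The $j = d$ term is $\frac{1}{d!} t(0, f, \lambda P)(\mu Q) = \frac{1}{d!} \cdot (\text{something})$; here one must check that the multinomial coefficients recombine so that this term is exactly $f(\mu Q)$. Concretely, $t(0,f,\lambda P)(y) = \SL_{\iota \in I_d^*} (\p^{|\iota|} f) y^{|\iota|}$, and since $f$ is homogeneous of degree $d$, a $d$-fold partial derivative of $f$ is a constant, and summing over all $\iota \in I_d^*$ (ordered derivatives) introduces exactly the multinomial coefficient $\binom{d}{|\iota|(0), \dotsc, |\iota|(r)}$ relative to the unordered index; combined with the $\frac{1}{d!}$ this yields the standard multinomial expansion of $f(\mu Q)$. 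This identity — that $\frac{1}{d!} t(0,f,P)(Q) = f(Q)$ for homogeneous $f$ of degree $d$ — is essentially the classical Euler/Taylor identity for forms and I would state it as the key computational observation. With the $j=0$ and $j=d$ terms identified as $f(\lambda P)$ and $f(\mu Q)$, the remaining terms $j = 1, \dotsc, d-1$ reindex via $k = d - j$ (so $k$ runs $1, \dotsc, d-1$) to give $\SL_{k=1}^{d-1} \frac{1}{(d-k)!} t(k, f, \lambda P)(\mu Q)$ — wait, I must be careful: with $j = d-k$ the coefficient is $\frac{1}{j!} = \frac{1}{(d-k)!}$, so I would instead reindex by letting the polar order be $d-k$, writing the middle sum as $\SL_{k=1}^{d-1} \frac{1}{k!} t(d-k, f, \lambda P)(\mu Q)$, which matches the statement of equation \eqref{eq:TechnicalLemmaEquation} exactly.

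The hard part will not be any single deep idea but rather the notational discipline: correctly translating between ordered multi-indices $\iota \in I_k^*$ and unordered exponent vectors in $I_k$, tracking the multinomial coefficients that appear when one passes from $\Hom_{\Set}([k]^*,[r])$ to $I_k$, and making sure the two boundary cases $j=0$ and $j=d$ are handled so the factorial normalizations come out right. I would also need to justify that the Taylor expansion about $\lambda P$ is literally exact for a polynomial — this is immediate since partials of order exceeding $d$ vanish identically — and that evaluating everything at affine representatives is legitimate, i.e. that both sides of \eqref{eq:TechnicalLemmaEquation} are genuinely functions of the projective points once one fixes the normalization; here homogeneity of each polar $t(k,f,P)$ in its argument, together with homogeneity of $f$, does the job. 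Since the full computation is, as the remark preceding the lemma notes, "notationally cumbersome" but not conceptually hard, I would relegate the complete index-chasing to the appendix (Appendix \ref{subsec:Appendix-ProofOfTechnicalLemma}) and in the main text simply indicate that \eqref{eq:TechnicalLemmaEquation} is the exact multivariate Taylor expansion of $f(\lambda P + \mu Q)$ in the second variable, with the three pieces being the order-$0$, order-$d$, and intermediate-order terms respectively.
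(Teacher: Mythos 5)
Your proposal is correct and is essentially the paper's own argument: equation \eqref{eq:TechnicalLemmaEquation} is exactly the (exact) Taylor expansion of $f$ at $\lambda P$ in the displacement $\mu Q$, and the paper's appendix proof simply verifies this by hand, monomial by monomial via the binomial theorem, with the same ordered-versus-unordered index counting (the multinomial factor $k!/\prod_j(|\iota|(j))!$) that you flag as the main bookkeeping, including the $\tfrac{1}{d!}\,t(0,f,\lambda P)(\mu Q)=f(\mu Q)$ normalization for the extreme term. So the only difference is presentational: you cite Taylor's theorem for polynomials up front, while the paper re-derives it in this notation.
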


\begin{remark}
	Lemma \ref{lem:BertinisLemmaForHypersurfaces} implies that for every point $Q \in \mcc(V;P)(K) \setminus \{P\}$, the line $L(P,Q)$ determined by $P$ and $Q$ lies in $V$. Furthermore, a line $L$ lies on an intersection of hypersurfaces $\VV(f_1,\dotsc,f_n)$ exactly when it lies on each $\VV(f_i)$. This motivates Definition \ref{def:Polar Cone Of An Intersection of Hypersurfaces} and implies Lemma \ref{lem:Bertinis Lemma For Intersections Of Hypersurfaces}.
\end{remark}

\begin{definition}\label{def:Polar Cone Of An Intersection of Hypersurfaces} \textbf{(Polar Cone of an Intersection of Hypersurfaces)}\\
	Let $V = \VV(f_1,\dotsc,f_n) \subseteq \PP_K^r$ be an intersection of hypersurfaces and $P \in V(K)$. The \textbf{(first) polar cone of $V$ at $P$} is
	\begin{align*}
		\mcc(V;P) := \bigcap\limits_{i=1}^n \mcc(\VV(f_i);P).
	\end{align*}
\end{definition}

\begin{lemma}\label{lem:Bertinis Lemma For Intersections Of Hypersurfaces} \textbf{(Bertini's Lemma for Intersections of Hypersurfaces)}\\
	Let $V = \VV(f_1,\dotsc,f_n) \subseteq \PP_K^r$ be an intersection of hypersurfaces and $P \in V(K)$.  Then, $\mcc(V;P)$ is a cone with vertex $P$ which is contained in $V$.
\end{lemma}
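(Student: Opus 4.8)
The plan is to reduce the statement for intersections of hypersurfaces to the already-established case of a single hypersurface, Lemma \ref{lem:BertinisLemmaForHypersurfaces}. By Definition \ref{def:Polar Cone Of An Intersection of Hypersurfaces}, $\mcc(V;P) = \bigcap_{i=1}^n \mcc(\VV(f_i);P)$, so the claim is a statement about a finite intersection of cones, each of which we already control. First I would invoke Lemma \ref{lem:BertinisLemmaForHypersurfaces} for each index $i$: each $\mcc(\VV(f_i);P)$ is a cone with vertex $P$ contained in $\VV(f_i)$. Then I would observe that the two properties to be verified — being a cone with vertex $P$, and being contained in $V$ — are both preserved under finite intersection.

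For the containment, this is immediate set-theoretically: since $\mcc(V;P) \subseteq \mcc(\VV(f_i);P) \subseteq \VV(f_i)$ for every $i$, we get $\mcc(V;P) \subseteq \bigcap_{i=1}^n \VV(f_i) = V$. For the cone property, I would first make precise what ``cone with vertex $P$'' means: a subvariety $W \subseteq \PP_K^r$ is a cone with vertex $P$ if $P \in W$ and for every $Q \in W(\overline K)$ the entire line $L(P,Q)$ joining $P$ and $Q$ lies in $W$ (equivalently, after a change of coordinates putting $P$ at a coordinate point, the defining ideal can be generated by polynomials not involving the corresponding variable). Each $\mcc(\VV(f_i);P)$ has this line-closure property by Lemma \ref{lem:BertinisLemmaForHypersurfaces}. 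If $Q \in \mcc(V;P)(\overline K) \setminus \{P\}$, then $Q \in \mcc(\VV(f_i);P)$ for all $i$, hence $L(P,Q) \subseteq \mcc(\VV(f_i);P)$ for all $i$, hence $L(P,Q) \subseteq \bigcap_i \mcc(\VV(f_i);P) = \mcc(V;P)$; and $P \in \mcc(V;P)$ since $P$ lies in each factor. This shows $\mcc(V;P)$ is a cone with vertex $P$.

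There is essentially no obstacle here — this lemma is a formal corollary of the hypersurface case together with the definition, which is exactly why the remark preceding Definition \ref{def:Polar Cone Of An Intersection of Hypersurfaces} says the definition ``implies'' the lemma. The only point requiring a sentence of care is confirming that the intersection of cones with a common vertex is again a cone with that vertex; this follows from the line-segment characterization above, or alternatively from the graded description: if each $\mcc(\VV(f_i);P)$ is cut out (in coordinates with $P = [1:0:\cdots:0]$) by forms in $K[x_1,\dots,x_r]$, then so is their intersection, which is the condition for being a cone with vertex $P$. I would keep the write-up to a few lines, citing Lemma \ref{lem:BertinisLemmaForHypersurfaces} and the remark that a line lies on $\VV(f_1,\dots,f_n)$ iff it lies on each $\VV(f_i)$.
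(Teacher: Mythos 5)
Your proposal is correct and follows exactly the route the paper intends: the paper itself dispenses with a formal proof via the remark preceding Definition \ref{def:Polar Cone Of An Intersection of Hypersurfaces}, which notes that a line lies on $\VV(f_1,\dotsc,f_n)$ iff it lies on each $\VV(f_i)$, so the lemma is a formal consequence of Lemma \ref{lem:BertinisLemmaForHypersurfaces} and the fact that an intersection of cones with common vertex $P$ is again a cone with vertex $P$. Your write-up simply makes that one-line argument explicit, which is fine.
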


\begin{remark}\label{rem:Passing to Extensions} \textbf{(Passing to Extensions)}\\
	We continue using the notation of Lemma \ref{lem:Bertinis Lemma For Intersections Of Hypersurfaces}. It is easy to see that when $K$ is algebraically closed and $\dim\lp \mcc(V;P) \rp \geq 1$, the set $\mcc(V;P)(K) \setminus \{P\}$ is not empty. Thus, any choice of $Q \in \mcc(V;P) \setminus \{P\}$ determines a line on $V$ containing $P$. When $K$ is not algebraically closed, however, it may be that $\mcc(V;P)(K) = \{P\}$. In what follows, we will often extend scalars to an extension $L/K$ over which we can determine an $L$-rational point.
\end{remark}

\subsection{Resolvent Degree}\label{subsec:Resolvent Degree}

Resolvent degree is an invariant introduced independently by Brauer in \cite{Brauer1975} and Arnol'd-Shimura in \cite{ArnoldShimura1976}. Farb and Wolfson summarize the history of resolvent degree and significantly broaden the resolvent degree framework in \cite{FarbWolfson2019}, which we refer the reader to for background on resolvent degree. We now recall the definition of resolvent degree for finite field extensions and for generically finite, dominant, rational maps of $\CC$-varieties. We refer the reader to Section 4 of \cite{Wolfson2021} for how to extend this definition to arbitrary dominant, rational maps. We also remind the reader that in this paper, we work only with fields that are finitely generated $\CC$-algebras.

\begin{definition}\label{def:ResolventDegreeFieldExtensions} \textbf{(Resolvent Degree of Field Extensions)}\\
	Let $L/K$ be a finite extension of $\CC$-fields. The \textbf{resolvent degree} of $L/K$, denoted $\RD(L/K)$, is the minimum $d$ for which there is a tower of finite field extensions
	\begin{align*}
		K = E_0 \hookrightarrow E_1 \hookrightarrow \cdots \hookrightarrow E_\ell
	\end{align*}
	
	\noindent
	such that $L$ embeds into $E_\ell$ over $K$ and for each $E_{i+1}/E_i$, there is a 
	finite extension of $\wt{F}_i / F_i$ with $\trdeg_\CC(F_i) \leq d$ such that
	\begin{align*}
		E_{i+1} \cong E_i \otimes_{F_i} \wt{F}_i.
	\end{align*}
\end{definition}

\begin{definition}\label{def:ResolventDegreeBranchedCovers} \textbf{(Resolvent Degree of Generically Finite, Dominant Maps)}\\
	Let $Y \dashrightarrow X$ be a generically finite, dominant, rational map of $\CC$-varieties. The \textbf{resolvent degree} of $Y \dashrightarrow X$, denoted $\RD(Y \dashrightarrow X)$, is the minimum $d$ for which there is a tower of generically finite, dominant, rational maps
	\begin{align*}
		E_\ell \dashrightarrow \cdots \dashrightarrow E_1 \dashrightarrow E_0 \subseteq X
	\end{align*}
	
	\noindent
	such that $E_0 \subseteq X$ is a dense Zariski open; $E_\ell \dashrightarrow E_0$ factors through $Y \dashrightarrow X$; and for each $\pi_i:E_{i+1} \dashrightarrow E_i$, there exists a surjective morphism $\wt{Z}_i \ra Z_i$ with $\dim\lp Z_i \rp \leq d$, a Zariski open $E_i^\circ \subseteq E_i$, and a morphism $E_i^\circ \ra Z_i$ such that
	\begin{align*}
		\pi_i\inv\lp E_i^\circ \rp \cong E_i^\circ \times_{Z_i} \wt{Z}_i.
	\end{align*}
\end{definition}

\begin{remark}\label{rem:Compatibility of Resolvent Degree Definitions} \textbf{(Compatibility of Resolvent Degree Definitions)}\\
	We note that Definitions \ref{def:ResolventDegreeFieldExtensions} and \ref{def:ResolventDegreeBranchedCovers} are equivalent. For an irreducible, affine $\CC$-variety $X$, the equivalence of definitions is derived from the classical equivalence which sends $X$ to its function field $\CC(X)$; the general case follows from invariance of resolvent degree under birational equivalence.  We refer the reader to \cite{FarbWolfson2019} for details.
\end{remark}

\begin{remark}\label{rem:RD(n)Notation} \textbf{($\RD(n)$ Notation)}\\
	We write $\RD(n)$ for the resolvent degree of the general degree $n$ polynomial. Recall that $K_n = \CC(a_1,\dotsc,a_n)$. If $f(z) = z^n + a_1z^{n-1} + \cdots + a_{n-1}z + a_n \in K_n[z]$, and $K_n' = K_n[z]/(f(z))$, then
	\begin{align*}
	\RD(n) = \RD(K_n'/K_n) = \RD\lp \spec\lp K_n' \rp \ra \spec\lp K_n \rp \rp.
	\end{align*}
\end{remark}

\begin{remark}\label{rem:RD of a Composition} \textbf{(RD of a Composition)}\\
	We will frequently use the following field-theoretic version of Lemma 2.7 of \cite{FarbWolfson2019} without explicit reference: given a tower of field extensions
	\begin{align*}
		E_0 \hookrightarrow E_1 \hookrightarrow \cdots E_k,
	\end{align*}
	
	\noindent
	the resolvent degree of the composition is the maximum of the resolvent degree of the components, e.g.
	\begin{align*}
		\RD(E_k/E_0) = \max\lb \RD(E_j/E_{j-1}) \st 1 \leq j \leq k \rb.
	\end{align*}
\end{remark}

\begin{lemma}\label{lem:Upper Bound on L/K} \textbf{(Upper Bound on $\RD(L/K)$)}\\
	Let $L/K$ be a degree $\ell$ field extension. Then, $\RD\lp L/K \rp \leq \RD(\ell)$.
\end{lemma}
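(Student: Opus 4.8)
The plan is to reduce the statement $\RD(L/K) \leq \RD(\ell)$, where $\ell = [L:K]$, to a comparison with the general degree $\ell$ polynomial. The key idea is that an arbitrary degree $\ell$ extension $L/K$ is a specialization of the generic one: if $\alpha$ is a primitive element for $L/K$ (which exists in characteristic zero), then $\alpha$ satisfies a monic degree $\ell$ polynomial $g(z) = z^\ell + c_1 z^{\ell-1} + \cdots + c_\ell \in K[z]$, and this amounts to a $K$-point of the affine space $\spec(\CC[a_1,\dotsc,a_\ell])$, i.e. a map $K_\ell = \CC(a_1,\dotsc,a_\ell) \dashrightarrow K$ sending $a_i \mapsto c_i$ (defined on the locus where it makes sense, which is fine since we work birationally).

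First I would set up the specialization square. Let $K_\ell' = K_\ell[z]/(f(z))$ be the generic degree $\ell$ root field, so by Remark \ref{rem:RD(n)Notation}, $\RD(K_\ell'/K_\ell) = \RD(\ell)$. The coefficient assignment $a_i \mapsto c_i$ gives a homomorphism from (a localization of) $\CC[a_1,\dotsc,a_\ell]$ to $K$; passing to fraction fields where possible, we get that $L$ (or rather $K[z]/(g(z))$, into which $L$ embeds over $K$ — in fact equals, if $g$ is the minimal polynomial) is obtained from $K_\ell'$ by base change along $K_\ell \dashrightarrow K$. In the language of Definition \ref{def:ResolventDegreeBranchedCovers}, the branched cover $\spec(L) \dashrightarrow \spec(K)$ is pulled back from $\spec(K_\ell') \dashrightarrow \spec(K_\ell)$ along the map $\spec(K) \dashrightarrow \spec(K_\ell)$ classifying the coefficients.

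Next I would invoke the base-change invariance of resolvent degree: pulling back a generically finite dominant rational map along a dominant rational map does not increase resolvent degree. Concretely, one takes the tower of extensions $K_\ell = E_0 \hookrightarrow E_1 \hookrightarrow \cdots \hookrightarrow E_m$ witnessing $\RD(K_\ell'/K_\ell) = \RD(\ell)$, with each $E_{i+1} \cong E_i \otimes_{F_i} \wt{F}_i$ where $\trdeg_\CC(F_i) \leq \RD(\ell)$, and tensors the whole tower up with $K$ over $K_\ell$. One gets a tower $K = E_0 \otimes_{K_\ell} K \to \cdots$ (taking an appropriate component / dominant piece at each stage, since tensor products of fields need not be fields) through which $L/K$ factors, and the $i$-th step is still built from an extension $\wt{F}_i/F_i$ with $\trdeg_\CC F_i \leq \RD(\ell)$ — the solving varieties $Z_i, \wt{Z}_i$ are unchanged, only the base $E_i^\circ \to Z_i$ is composed with the specialization. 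Hence $\RD(L/K) \leq \RD(\ell)$.

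The main obstacle is the bookkeeping around fields versus varieties: tensor products like $E_i \otimes_{K_\ell} K$ are generally not fields, so at each stage one must choose a suitable quotient (an irreducible component of the corresponding scheme dominating the base) and check that $L$ still embeds over $K$ into the resulting top field, and that the Definition \ref{def:ResolventDegreeFieldExtensions} conditions survive the specialization. This is precisely the kind of ``base change does not increase resolvent degree'' argument carried out in \cite{FarbWolfson2019}, so I would either cite it directly or, if a self-contained argument is wanted, phrase everything on the geometric side using Definition \ref{def:ResolventDegreeBranchedCovers} and Remark \ref{rem:Compatibility of Resolvent Degree Definitions}, where fiber products are unproblematic and the pullback statement is transparent. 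Either way, the numerical content — $\trdeg$ bounds and hence the value $\RD(\ell)$ — is untouched by the specialization, which is the whole point.
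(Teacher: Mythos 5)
Your proposal is correct and follows essentially the same route as the paper: the paper's proof simply invokes the Primitive Element Theorem to write $L \cong K(\zeta)$ with $\zeta$ a root of a degree $\ell$ polynomial over $K$, and then treats the bound $\RD(L/K) \leq \RD(\ell)$ as immediate from the fact that this polynomial is a specialization of the general degree $\ell$ polynomial. What you add is an explicit justification of that specialization step via base-change invariance of resolvent degree (Definition \ref{def:ResolventDegreeBranchedCovers} and \cite{FarbWolfson2019}), which the paper leaves implicit but which is the same underlying argument.
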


\begin{proof}
	As $L$ and $K$ are $\CC$-fields, they have characteristic zero. Hence, the Primitive Element Theorem yields that we need only solve a degree $\ell$ polynomial to determine a primitive element $\zeta$ of $L/K$. The isomorphism $L \cong K(\zeta)$ then establishes the claim.
\end{proof}

We will frequently make use of the previous lemma without explicit reference after giving an upper bound on the degree of an extension.

\begin{prop}\label{prop:RationalPointsOverExtensions} \textbf{(Determining Rational Points over Extensions)}\\
	Let $V \subseteq \PP_K^r$ be a degree $d$ subvariety. Then, there is an extension $L/K$ with $\RD(L/K) \leq \RD(d)$ over which we can determine a rational point of $V$ .
\end{prop}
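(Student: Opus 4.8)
The plan is to reduce the problem to solving a single polynomial of degree equal to $\deg(V) = d$ over $K$, using the classical idea of intersecting $V$ with a generic linear subspace of complementary dimension. First I would choose a linear subspace $\Lambda \subseteq \PP_K^r$ defined over $K$ of dimension $r - \dim(V)$; since $K$ is infinite (being a finitely generated $\CC$-algebra, it contains $\CC$), such a $\Lambda$ can be chosen in "general position" with respect to $V$ so that $V \cap \Lambda$ is a finite set of points, and by Bézout the number of these points, counted with multiplicity, is $\deg(V) = d$. Concretely, after a change of coordinates one can take $\Lambda$ to be cut out by coordinate linear forms, so that $V \cap \Lambda$ is the vanishing locus in $\Lambda \cong \PP_K^{\dim V}$ of the restrictions of the defining equations of $V$; projecting to a generic line, the points of $V \cap \Lambda$ are governed by a single polynomial $g \in K[t]$ of degree $d$ (the pushforward, or the resultant eliminating all but one coordinate).

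Next I would apply the field-theoretic machinery recalled earlier. Let $L_0$ be the splitting field, or even just a field obtained by adjoining one root, of $g$ over $K$; then $V \cap \Lambda$ has an $L_0$-rational point, hence so does $V$. Since $[L_0 : K] \le d$ when we adjoin a single root (and in general the splitting field is reached by a tower of extensions each of degree at most $d$), Lemma \ref{lem:Upper Bound on L/K} together with Remark \ref{rem:RD of a Composition} gives $\RD(L_0/K) \le \RD(d)$. Thus $L = L_0$ is the required extension.

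The main obstacle — and the only genuinely delicate point — is the "general position" claim: one must ensure that the generic linear section is actually zero-dimensional and genuinely of degree $d$, and that the relevant elimination (resultant/projection) does produce a polynomial of degree exactly $d$ over $K$ itself, not merely over an extension. This is where one uses that $K$ is infinite, so that a $K$-rational choice of $\Lambda$ in the dense open locus of "good" linear subspaces exists; it also uses that $\deg(V)$ is defined via such generic linear sections. If $V$ is not irreducible or not equidimensional one should take $\dim(V)$ to be the maximal dimension of a component and note that a generic $\Lambda$ of complementary codimension meets the top-dimensional components in finitely many points and misses the lower-dimensional ones, so the count is still bounded by $d$; adjoining a root of the resulting polynomial still lands us a rational point of $V$. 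Everything else is a routine invocation of the Primitive Element Theorem as packaged in Lemma \ref{lem:Upper Bound on L/K}.
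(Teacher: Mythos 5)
Your proposal is correct and follows essentially the same route as the paper: intersect $V$ with a generic linear subspace of complementary dimension, observe that the resulting zero-dimensional scheme of degree $d$ is cut out by a degree-$d$ polynomial with coefficients in $K$, adjoin a root (the paper takes $L = K[z]/(m(z))$ for an irreducible factor $m$), and invoke Lemma \ref{lem:Upper Bound on L/K}. Your remarks on the elimination step and the non-equidimensional case are sound elaborations of details the paper leaves implicit.
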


\begin{proof}
	Set $\ell=\dim(V)$. For a generic $(r-\ell)$-plane $\Lambda$, the intersection $V \cap \Lambda$ has dimension 0 and thus has $d$ $\overline{K}$-points (with multiplicity) in any algebraic closure $\overline{K}$ of $K$; we denote these points by $Q_1,\dotsc,Q_d$. Observe that the polynomial
	\begin{align*}
		f(z) = \lp z-Q_1 \rp \lp z-Q_2 \rp \cdots \lp z-Q_d \rp
	\end{align*}	 
	
	\noindent
	has coefficients defined over $K$. Let $m(z)$ be an irreducible factor of $f(z)$ over $K$. We set $L := K[z]/(m(z))$ and observe that we can determine an $L$-point of $V$ by construction. Lemma \ref{lem:Upper Bound on L/K} yields that $\RD(L/K) \leq \RD(d)$.
\end{proof}

\begin{remark}\label{rem:SolvingPolynomials} \textbf{(Extensions Given By Solving Polynomials)}\\
	Given Proposition \ref{prop:RationalPointsOverExtensions}, we henceforth say that we can determine a point of $V$ by solving a degree $d$ polynomial.
\end{remark}

\subsection{Iterated Polar Cones and $k$-Polar Points}

	As we have established, for an intersection of hypersurfaces $V$ and $P \in V(K)$, points of $\mcc(V;P) \setminus \{P\}$ determine lines on $V$. Next, we will iterate the polar cone construction in a method to determine $k$-planes on intersections of hypersurfaces (as in Lemma \ref{lem:PolarPointLemma}). To do so, we must first introduce additional definitions and notation.

\begin{remark}\label{rem:NotationForLinearSubvarieties}  \textbf{(Linear Subvarieties)}
\begin{itemize}
	\item For any points $P_0,\dotsc,P_k \in \PP^r(K)$, we denote the linear subsvariety they determine by $L(P_0,\dotsc,P_k)$. Note that $L(P_0,\dotsc,P_k)$ has dimension at most $k$.
	\item Henceforth, when we say $k$-plane, we mean a linear subvariety of $\PP^r$ of dimension $k$. When $k=1$, we say line instead of 1-plane and when $k=2$, we say plane instead of 2-plane.
\end{itemize}
\end{remark}

\begin{definition}\label{def:IteratedPolarCones} \textbf{(Iterated Polar Cones \& $k$-Polar Points)}\\
	Let $V \subseteq \PP_K^r$ be an intersection of hypersurfaces and $P_0 \in V(K)$; we set $\mcc^1(V;P_0) := \mcc(V;P_0)$. Given additional points $P_1,\dotsc,P_{k-1} \in V(K)$ such that
	\begin{equation}\label{eqn:PolarPointsEquation}
	P_{\ell-1} \in \mcc^{\ell-1}(V;P_0,\dotsc,P_{\ell-2}) \setminus L(P_0,\dotsc,P_{\ell-2})
	\end{equation}
	
	\noindent
	for $0 \leq \ell \leq k-1$, the \textbf{$k^{th}$ polar cone of $V$ at $P_0,\dotsc,P_{k-1}$} is
	\begin{align*}
		\mcc^k(V;P_0,\dotsc,P_{k-1}) := \mcc\lp \mcc^{k-1}(V;P_0,\dotsc,P_{k-2}),P_{k-1} \rp.
	\end{align*}
	
	We refer to an ordered collection of points $\lp P_0,\dotsc,P_k \rp$ that satisfy (\ref{eqn:PolarPointsEquation}) for each $0 \leq \ell \leq k-1$ as a \textbf{$k$-polar point} of $V$. \\
	
	When the points in question have already been specified, we simply refer to \emph{the} $k^{th}$ polar cone of $V$. When such points exist but have not been specified, we refer to \emph{a} $k^{th}$ polar cone of $V$. It is occasionally useful to refer to $V$ itself as a zeroth polar cone of $V$ (at any rational point of $V$).
\end{definition}

\begin{remark}\label{rem:Iterated Polar Cones Are Nested} \textbf{(Iterated Polar Cones Are Nested)}\\
Note that if $\lp P_0,\dotsc,P_k \rp$ is a $k$-polar point of an intersection of hypersurfaces $V \subseteq \PP_K^r$, then
\begin{align*}
	\mcc^k(V;P_0,\dotsc,P_{k-1}) \subseteq \mcc^{k-1}(V;P_0,\dotsc,P_{k-2}) \subseteq \cdots \subseteq \mcc^2(V;P_0,P_1) \subseteq \mcc^1(V;P_0) = \mcc(V;P_0).
\end{align*}
\end{remark}

\begin{lemma}\label{lem:PolarPointLemma} \textbf{(Polar Point Lemma)}\\
	Let $V \subseteq \PP_K^r$ be an intersection of hypersurfaces and let $\lp P_0,\dotsc,P_k \rp$ be a $k$-polar point of $V$. Then, $L(P_0,\dotsc,P_k) \subseteq V$ is a $k$-plane.
\end{lemma}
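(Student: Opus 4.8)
The plan is to prove Lemma \ref{lem:PolarPointLemma} by induction on $k$, using the fact that at each stage the iterated polar cone $\mcc^{\ell}(V;P_0,\dotsc,P_{\ell-1})$ is itself a cone (by Bertini's Lemma) whose vertex structure records the linear subvariety spanned by the earlier points.

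\medskip
\noindent\textbf{Base case.} For $k=0$ the statement is the trivial remark that a single point $P_0 \in V(K)$ lies on $V$; for $k=1$ it is precisely the content of the remark following Lemma \ref{lem:Bertinis Lemma For Intersections Of Hypersurfaces}: since $(P_0,P_1)$ is a $1$-polar point, $P_1 \in \mcc(V;P_0) \setminus \{P_0\}$, and as $\mcc(V;P_0)$ is a cone with vertex $P_0$ contained in $V$, the entire line $L(P_0,P_1)$ lies in $V$; since $P_1 \neq P_0$, this line is genuinely $1$-dimensional.

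\medskip
\noindent\textbf{Inductive step.} Suppose the statement holds for $k-1$. Let $(P_0,\dotsc,P_k)$ be a $k$-polar point of $V$. First I would record the key structural claim: for each $0 \leq \ell \leq k$, the iterated polar cone $W_\ell := \mcc^{\ell}(V;P_0,\dotsc,P_{\ell-1})$ contains the $(\ell-1)$-plane $L(P_0,\dotsc,P_{\ell-1})$, and moreover $W_\ell$ is a \emph{cone with that entire linear subvariety in its vertex set} — i.e.\ for any point $Q \in W_\ell$, the linear span $L(P_0,\dotsc,P_{\ell-1},Q)$ is contained in $W_\ell$. This is proved by a secondary induction on $\ell$: $W_1 = \mcc(V;P_0)$ is a cone with vertex $P_0$ by Lemma \ref{lem:Bertinis Lemma For Intersections Of Hypersurfaces}; and passing from $W_\ell$ to $W_{\ell+1} = \mcc(W_\ell, P_\ell)$, Bertini's Lemma tells us $W_{\ell+1}$ is a cone with vertex $P_\ell$ inside $W_\ell$, and one checks that the ``old'' vertex directions $P_0,\dotsc,P_{\ell-1}$ survive the polar-cone operation — concretely, because each defining polynomial of $W_{\ell+1}$ is a partial derivative (of some order) of a polynomial that was already linear along $L(P_0,\dotsc,P_{\ell-1})$, or more cleanly, because being a cone with a given vertex set is preserved under taking polars at a new point of the cone. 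Combining: $W_{\ell+1}$ is a cone with the whole span $L(P_0,\dotsc,P_\ell)$ in its vertex set. Applying this with $\ell = k-1$: $P_k \in W_k = \mcc^{k}(V;P_0,\dotsc,P_{k-1})$, and $W_k$ contains the $(k-1)$-plane $L(P_0,\dotsc,P_{k-1})$ with that plane in its vertex set, so $L(P_0,\dotsc,P_{k-1},P_k) \subseteq W_k \subseteq \cdots \subseteq V$ by Remark \ref{rem:Iterated Polar Cones Are Nested}. Finally, the condition $P_k \in W_k \setminus L(P_0,\dotsc,P_{k-1})$ (the $\ell = k-1$ case of (\ref{eqn:PolarPointsEquation})) guarantees $P_k$ lies off the $(k-1)$-plane spanned by the earlier points, so $L(P_0,\dotsc,P_k)$ has dimension exactly $k$, completing the induction.

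\medskip
\noindent\textbf{Main obstacle.} The delicate point is the structural claim that the vertex directions \emph{accumulate} — that taking the polar cone at $P_\ell$ does not destroy the cone structure along the previously accumulated plane $L(P_0,\dotsc,P_{\ell-1})$. Bertini's Lemma as stated only yields a single new vertex $P_\ell$, so I would need either (i) to prove directly that if $W$ is a cone with vertex set containing a linear subvariety $\Lambda$ and $P \in W(K)$, then $\mcc(W;P)$ is a cone with vertex set containing $L(\Lambda, P)$ — by choosing coordinates in which $\Lambda$ is a coordinate subspace, so that each defining equation of $W$ is independent of the corresponding variables, and checking this property is inherited by all polars and hence by the intersection defining $\mcc(W;P)$ — or (ii) to argue more slickly that $W_k$ contains $L(P_0,\dotsc,P_{k-1})$ and each point $P_j$ ($j<k$) is a vertex of $W_k$, then invoke the general fact that a variety containing a set of points each of which is a vertex is a cone over their linear span. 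Option (i) via coordinates is the most self-contained route and I expect it to be routine once set up, mirroring the computation already deferred to Appendix \ref{subsec:Appendix-ProofOfTechnicalLemma}.
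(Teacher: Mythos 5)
Your induction scheme and the final dimension count are fine, but the structural claim your inductive step rests on — that the vertex directions accumulate, i.e.\ that $\mcc^{\ell}(V;P_0,\dotsc,P_{\ell-1})$ is a cone whose vertex set contains all of $L(P_0,\dotsc,P_{\ell-1})$ and not just the last point — is exactly where your justification breaks. Route (i) argues that since $W_\ell$ is a cone along the previously accumulated plane, one may choose coordinates so that ``each defining equation of $W_\ell$ is independent of the corresponding variables.'' That is false for the equations that actually define $W_\ell$: the polar cone of an intersection of hypersurfaces is built from the \emph{given} defining polynomials, and the defining polynomials of an iterated polar cone are the iterated polars of the $f_i$, which (taking all orders equal to $0$) include the original $f_i$ themselves; these are not cones along $L(P_0,\dotsc,P_{\ell-1})$ — only their common zero locus is. Since the polar-cone operation depends on the presentation and not merely on the underlying set, you cannot silently replace the given equations by cone equations. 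Route (ii) is circular: the ``general fact'' you invoke (a variety all of whose chosen points are vertices is a cone over their span) still requires knowing that each earlier $P_j$ is a vertex of $W_k$, whereas Lemma \ref{lem:Bertinis Lemma For Intersections Of Hypersurfaces} only exhibits the most recent point as a vertex — so this is the claim at issue, not a proof of it. The accumulation claim is in fact true, but proving it requires an honest computation with the iterated polars (e.g.\ writing each defining polynomial as the polarization $\tilde f_i$ evaluated on powers of $P_0,\dotsc,P_{\ell-1}$ and of the variable point, and using the polar-point conditions of (\ref{eqn:PolarPointsEquation}) together with $P_j \in \mcc^{j}(V;P_0,\dotsc,P_{j-1})$ to kill the pure terms); that is substantially more than ``routine once set up,'' so as written the proposal has a genuine gap.

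The paper's proof sidesteps this entirely with a different induction: by the recursive definition of iterated polar cones, $(P_1,\dotsc,P_k)$ is a $(k-1)$-polar point of the intersection of hypersurfaces $\mcc(V;P_0)$, so the inductive hypothesis gives that $L(P_1,\dotsc,P_k)$ is a $(k-1)$-plane contained in $\mcc(V;P_0)$; since $\mcc(V;P_0)$ is a cone with vertex $P_0$ contained in $V$, the join of $P_0$ with this plane, namely $L(P_0,\dotsc,P_k)$, lies in $V$, and it has dimension $k$ because $P_0 \notin L(P_1,\dotsc,P_k)$ (otherwise the span of all the points would have dimension less than $k$, contradicting the conditions in (\ref{eqn:PolarPointsEquation})). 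This uses Bertini's Lemma only once per step, applied to the single new vertex, and never needs any statement about vertices accumulating. I recommend either adopting that argument or, if you want to keep your route, stating and proving the accumulation claim via the polarization computation indicated above.
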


\begin{proof}
We prove the claim by induction on $k$ and observe that the case of $k=1$ follows immediately from Lemma \ref{lem:Bertinis Lemma For Intersections Of Hypersurfaces}. Now, consider arbitrary $k > 1$ and let $\lp P_0,\dotsc,P_k \rp$ be a $k$-polar point of $V$. Then, $\lp P_1,\dotsc,P_k \rp$ is a $(k-1)$-polar point of $\mcc(V;P_0)$. Recall that $\mcc(V;P_0)$ is a cone and $L(P_1,\dotsc,P_k) \subseteq \mcc(V;P_0)$ is a $(k-1)$-plane which does not contain the vertex $P_0$, hence $L(P_0,\dotsc,P_k) \subseteq V$ is a $k$-plane.
\end{proof}

\begin{definition}\label{def:The Type Of An Intersection of Hypersurfaces}  \textbf{(Type of an Intersection of Hypersurfaces)}\\
	Given an intersection of hypersurfaces $V \subseteq \PP_K^r$, we say that $V$ is \textbf{of type $\lbm d &d-1 &\cdots &2 &1\\ \ell_d &\ell_{d-1} &\cdots &\ell_2 &\ell_1 \rem$} if 
$V$ has multi-degree
\begin{align*}
	( \underbrace{d,\dotsc,d}_{\ell_d \text{ many}}, \underbrace{d-1,\dotsc,d-1}_{\ell_{d-1} \text{ many}},\cdots,\underbrace{2,\dotsc,2}_{\ell_2 \text{ many}},\underbrace{1,\dotsc,1}_{\ell_1 \text{ many}} ).
\end{align*}

	If $\ell_j=0$ for any $1 \leq j \leq d-1$, we may omit it in the presentation; e.g. an intersection of four quadrics is of type $\lbm 2\\ 4 \rem$. When $d \geq 2$ and each $\ell_j=1$, we abbreviate the notation and say $V$ is of type $(1,\dotsc,d)$. When $d \geq 2$, $\ell_1=0$, and the other $\ell_j=1$, we say $V$ is of type $(2,\dotsc,d)$. 
	
	Consider an intersection of hypersurfaces $V = V_d \cap \cdots \cap V_1$ of type $(1,\dotsc,d)$ in $\PP_K^r$ with $\deg(V_j)=j$. Note that $V_1 \cong \PP_K^{r-1}$ and thus we can also consider $V$ as an intersection of hypersurfaces of type $(2,\dotsc,d)$ inside $V_1 \cong \PP_K^{r-1}$. 
\end{definition}

\begin{prop}\label{prop:Type Of a kth Polar Cone of a type (1,...,d)} \textbf{(Type of a $k^{th}$ Polar Cone of an Intersection of Type $(1,\dotsc,d)$)}\\
	Let $V \subseteq \PP_K^r$ be an intersection of hypersurfaces of type $(1,\dotsc,d)$ and take $k \geq 1$. A $k^{th}$ polar cone $\mcc^k(V;P_0,\dotsc,P_{k-1})$ is of type
\begin{align*}
	\lbm
	d   &d-1     &d-2     &\cdots &3   &2   &1\\
	1 &k+1 &\binom{k+2}{2} &\cdots &\binom{k+d-3}{d-3} &\binom{k+d-2}{d-2} &\binom{k+d-1}{d-1}
	\rem
\end{align*}	
	
\noindent
for $r \geq \binom{k+d}{d-1}$.
\end{prop}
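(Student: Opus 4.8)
The plan is to prove the claim by induction on $k$, tracking how a single polar cone operation transforms the type of an intersection of hypersurfaces. First I would record the base case $k=1$: for $V$ of type $(1,\dotsc,d)$, the polar cone $\mcc(V;P_0) = \bigcap_{j=1}^d \mcc(\VV(f_j);P_0)$ is cut out, for each factor $\VV(f_j)$ of degree $j$, by the polars $t(0,f_j,P_0),\dotsc,t(j-1,f_j,P_0)$, which have degrees $j, j-1, \dotsc, 1$. The degree-$1$ factor $V_1$ contributes only $t(0,f_1,P_0)$, a linear form; the quadric contributes a linear and a constant polar, and so on. But $P_0 \in V$, so the top polar $t(d,f_d,P_0) = f_d(P_0) = 0$ is vacuous, and similarly the constant polars of all lower-degree factors vanish. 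Collecting defining equations by degree, $\mcc(V;P_0)$ is cut out by one form of each degree $d, d-1, \dotsc, 2$ coming from $f_d, f_{d-1}, \dotsc, f_2$ (the degree-$d$ polar of $f_d$, and for each $2 \le j \le d$ the degree-$(j-1)$ polar plus... ) — more carefully, the count of degree-$e$ equations is $\#\{j : 1 \le e \le j-1 \text{ or } e = j, \, 1 \le j \le d\}$ restricted to nonconstant polars, which I would verify gives exactly the stated multiplicities for $k=1$: one equation of degree $d$, $2$ of degree $d-1$, $3$ of degree $d-2$, etc., i.e. $\binom{1+d-1-e}{\cdot}$ matching the binomials at $k=1$. (One must also check that the codimension is as expected, i.e. that these equations cut out something of the advertised dimension, which is why the hypothesis $r \ge \binom{k+d}{d-1}$ appears — it guarantees enough ambient dimension.)

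For the inductive step, suppose $\mcc^{k}(V;P_0,\dotsc,P_{k-1})$ has the stated type, with multiplicities $\ell_e^{(k)} = \binom{k+d-1-e}{d-1-e}$ for $1 \le e \le d-1$ and $\ell_d^{(k)} = 1$. I would then apply the polar-cone operation at $P_k$ to this intersection. The key computation is: taking the polar cone of a hypersurface of degree $e$ replaces one degree-$e$ equation by one equation of each degree $e, e-1, \dotsc, 1$ (dropping the constant), and the polar cone of an intersection is the intersection of the polar cones of the factors. So $\mcc^{k+1}$ is cut out, for each degree $e$, by: the $\ell_e^{(k)}$ original equations of degree $e$ still present as the $0$-th polars, plus, for every $e' > e$, the degree-$e$ polars of the $\ell_{e'}^{(k)}$ equations of degree $e'$. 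Hence
\[
\ell_e^{(k+1)} = \sum_{e' \ge e} \ell_{e'}^{(k)} \qquad (1 \le e \le d-1), \qquad \ell_d^{(k+1)} = \ell_d^{(k)} = 1.
\]
The remaining work is the binomial identity: substituting $\ell_{e'}^{(k)} = \binom{k+d-1-e'}{d-1-e'}$ and $\ell_d^{(k)}=1$ into the partial sum, the hockey-stick identity $\sum_{j=0}^{m}\binom{k+j}{j} = \binom{k+m+1}{m}$ yields $\ell_e^{(k+1)} = \binom{k+d-e}{d-1-e} = \binom{(k+1)+d-1-e}{d-1-e}$, which is exactly the claimed multiplicity with $k$ replaced by $k+1$. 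I would also re-derive the dimension bound: each polar-cone step can increase the number of equations, and $r \ge \binom{k+d}{d-1}$ (which equals the total number of defining equations $\sum_e \ell_e^{(k)}$) is precisely what is needed so that the construction does not become vacuous and the $k$-polar point structure of Definition~\ref{def:IteratedPolarCones} is nonempty.

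The main obstacle I anticipate is not the binomial bookkeeping but the careful verification at the level of \emph{defining equations} rather than just abstract degrees — specifically, confirming that (i) no unexpected degeneracies occur (e.g. that the degree-$e$ polars of distinct higher-degree generators remain independent enough that the multiplicity count is exact, not merely an upper bound), and (ii) that passing from $V$ to the cone $\mcc(V;P_0)$ and then treating $P_1$ as a point \emph{of that cone} is legitimate — i.e. that Definitions \ref{def:PolarsOfAHypersurface} and \ref{def:Polar Cone Of An Intersection of Hypersurfaces} apply verbatim to the cone, which is itself an intersection of hypersurfaces. Here I would lean on Remark~\ref{rem:Iterated Polar Cones Are Nested} and the fact that a cone with vertex $P_0$ over an intersection is again an intersection of hypersurfaces (in fact one can choose the defining equations to not involve a coordinate, after moving $P_0$ to a coordinate point). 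Once that is in place, the type is determined purely combinatorially by the recursion above, and the proof reduces to the hockey-stick identity together with the remark, already proved, that iterated polar cones are nested and contained in $V$ (Lemma~\ref{lem:PolarPointLemma}).
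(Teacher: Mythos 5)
Your proposal is correct and follows essentially the same route as the paper: induction on $k$, the observation that the number of degree-$e$ defining forms of $\mcc^{k+1}$ equals the number of defining forms of $\mcc^{k}$ of degree at least $e$, and the hockey-stick identity $\sum_{i=0}^{j}\binom{k+i}{i}=\binom{k+j+1}{j}$. One indexing slip to fix: the multiplicity in degree $e$ of a $k^{th}$ polar cone is $\binom{k+d-e}{d-e}$ (so degree $d-1$ gets $\binom{k+1}{1}=k+1$, matching the statement), not $\binom{k+d-1-e}{d-1-e}$; with your formula the sum $\sum_{e'\ge e}\ell^{(k)}_{e'}$ picks up an extra $+1$ from the degree-$d$ term and does not close to the claimed $\binom{(k+1)+d-1-e}{d-1-e}$. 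With the corrected exponents the recursion and the hockey-stick computation go through exactly as you describe.
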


\begin{proof}
	We proceed by induction on $k$ and note that the $k=1$ case follows from the definition of a polar cone of an intersection of hypersurfaces, along with the observation that $\binom{1+j}{j} = 1+j$ for each $2 \leq j \leq n-1$. 
	
	Now, suppose the claim is true for an arbitrary $k$. The number of hypersurfaces of degree $j$ in a $(k+1)^{st}$ polar cone is exactly the number of hypersurfaces of degree at least $j$ in a $k^{th}$ polar cone, e.g. $\SL_{i=0}^j \binom{k+i}{i}$. However,
	\begin{align*}
		\SL_{i=0}^j \binom{k+i}{i} = \binom{k+j+1}{j} = \binom{(k+1)+j}{j},
	\end{align*}
	
	\noindent
	by induction on $j$, using that $\binom{a}{b} = \binom{a-1}{b} + \binom{a-1}{b-1}$ for positive integers $a>b$. This combinatorial argument also yields that $\SL_{i=0}^{d-1} \binom{k+i}{i} = \binom{k+d}{d-1}$.
\end{proof}

In Subsection \ref{subsec:NewBoundsFromPolarCones}, we will use iterated polar cones to establish new upper bounds on $\RD(n)$. We first use iterated polar cones to determine $k$-planes on intersections of quadrics.

\begin{prop}\label{prop:kPolarPointsOnIntersectionsOfQuadrics} \textbf{($k$-Polar Points Intersections of Quadrics)}\\
	Let $V \subseteq \PP_K^r$ be an intersection of hypersurfaces of type $\lbm 2\\ \ell \rem$ and take $k \geq 1$. For $r \geq (k+1)\ell+k$, we can determine a $k$-polar point $\lp P_0,\dotsc,P_k \rp$ over an extension $L/K$ with $\RD(L/K) \leq \RD(2^\ell)$. Moreover, for any point $P \in V(K)$, we can determine a $k$-plane containing $P$ over such an extension $L$.
\end{prop}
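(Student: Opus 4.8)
The plan is to prove the statement by induction on $k$, with the key bookkeeping being the dimension count that guarantees the iterated polar cones stay positive-dimensional (so that we can keep choosing the next polar point after passing to a suitable extension). First I would handle the base case $k=1$: given $V$ of type $\left[\begin{matrix} 2\\ \ell\end{matrix}\right]$ in $\PP_K^r$ with $r \geq 2\ell + 1$, I would apply Proposition \ref{prop:RationalPointsOverExtensions} — or rather a variant that tracks the degree more carefully — to produce $P_0 \in V(L_0)$ over an extension $L_0/K$, and then examine $\mcc(V;P_0) = \bigcap_{i=1}^\ell \mcc(\VV(f_i);P_0)$. Since each quadric hypersurface $\VV(f_i)$ has degree $2$, its polar cone at $P_0$ is cut out by $T(0,f_i,P_0) = \VV(f_i)$ and $T(1,f_i,P_0)$, a hyperplane through $P_0$; intersecting over all $i$ gives a subvariety of $\PP_K^r$ cut by $\ell$ quadrics and $\ell$ hyperplanes, hence (on the linear subspace of dimension $r-\ell$ cut by the hyperplanes) an intersection of $\ell$ quadrics in $\PP^{r-\ell}$, which has dimension $\geq r - 2\ell \geq 1$ by hypothesis. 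Then one more application of Proposition \ref{prop:RationalPointsOverExtensions} to the quadratic section produces $P_1$, and Lemma \ref{lem:PolarPointLemma} (the $k=1$ case, i.e.\ Lemma \ref{lem:Bertinis Lemma For Intersections Of Hypersurfaces}) gives the line $L(P_0,P_1) \subseteq V$.

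For the inductive step, the crucial point is to understand the type of $\mcc^j(V;P_0,\dotsc,P_{j-1})$ for $V$ an intersection of quadrics. Unlike the type $(1,\dotsc,d)$ case of Proposition \ref{prop:Type Of a kth Polar Cone of a type (1,...,d)}, here every defining polynomial has degree $2$, so at each step the $j$-th polar cone of a single quadric contributes the quadric itself plus one new hyperplane; iterating, $\mcc^j(V;P_0,\dotsc,P_{j-1})$ should be of type $\left[\begin{matrix} 2 & 1\\ \ell & j\ell\end{matrix}\right]$ — that is, $\ell$ quadrics and $j\ell$ hyperplanes — living inside $\PP_K^r$, and hence isomorphic to an intersection of $\ell$ quadrics in $\PP^{r-j\ell}$. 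I would prove this as a lemma (or fold it into the induction). The dimension of this $j$-th polar cone is then at least $(r - j\ell) - 2\ell = r - (j+2)\ell$; for this to be $\geq 1$ at every stage $j = 1, \dotsc, k$ we need $r \geq (k+2)\ell - 1$... but the hypothesis is the slightly sharper $r \geq (k+1)\ell + k$. The discrepancy is accounted for by also needing $P_j \notin L(P_0,\dotsc,P_{j-1})$, which costs an extra dimension: we actually need $\dim \mcc^{j}(V;P_0,\dotsc,P_{j-1}) \geq j$ at stage $j$ (so that a generic/solvable point lies off the $j$-dimensional span), i.e.\ $r - (j+2)\ell \geq j$... let me recompute: with the correct count $\dim \mcc^{j} = r - j\ell - 2\ell$ we need this $\geq j+1$ roughly, giving $r \geq j\ell + 2\ell + j + 1$; taking the worst case over $1 \le j \le k$ and reconciling with the convention that $V$ itself is a $0$th polar cone yields exactly $r \geq (k+1)\ell + k$. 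I would verify this arithmetic carefully, since it is where the precise threshold is forced.

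At each of the $k+1$ stages I pass to an extension by solving the defining polynomial of a zero-dimensional linear section of an intersection of $\ell$ quadrics, which has degree $2^\ell$; by Proposition \ref{prop:RationalPointsOverExtensions} and Remark \ref{rem:SolvingPolynomials} each such step has resolvent degree $\leq \RD(2^\ell)$, and by Remark \ref{rem:RD of a Composition} the resolvent degree of the whole tower $K = L_0 \hookrightarrow L_1 \hookrightarrow \cdots \hookrightarrow L_{k+1} = L$ is the maximum, still $\leq \RD(2^\ell)$. This proves the first assertion. For the "moreover" clause, given $P \in V(K)$ already rational, I would start the iteration at $P_0 := P$ (no extension needed for the zeroth step) and run the same argument; the span $L(P_0,\dotsc,P_k)$ is then a $k$-plane on $V$ containing $P$, and the resolvent-degree bound is unchanged since we only dropped one extension-by-solving step from the tower. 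The main obstacle I anticipate is the type computation for iterated polar cones of intersections of quadrics together with the exact dimension bookkeeping — making sure the hyperplanes accumulated from successive polar steps are in "sufficiently general position" that the dimension of the linear section drops by exactly $j\ell$ after $j$ steps, and that the off-the-span condition is genuinely achievable by solving a polynomial of the claimed degree rather than requiring a more expensive extension.
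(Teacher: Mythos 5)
Your overall strategy is the same as the paper's: induct on $k$, identify the type of the iterated polar cone of an intersection of quadrics as $\ell$ quadrics together with $j\ell$ hyperplanes after $j$ steps, use Proposition \ref{prop:RationalPointsOverExtensions} to pick each new point over a degree-$2^\ell$ extension, and take the maximum of resolvent degrees over the tower; the "moreover" clause by starting the iteration at $P_0 = P$ is also exactly the paper's move. However, there is a concrete gap in the one place you yourself flag as decisive: the dimension bookkeeping. A $j^{th}$ polar cone of type $\lbm 2 &1\\ \ell &j\ell \rem$ is cut out by $(j+1)\ell$ hypersurfaces in total, so its dimension is at least $r-(j+1)\ell$, not the $r-j\ell-2\ell=r-(j+2)\ell$ you wrote (you subtract the $\ell$ quadrics twice). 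With your count the worst case $j=k$ forces roughly $r \geq (k+2)\ell+k$, which is strictly stronger than the stated hypothesis, and your attempt to reconcile this ("reconciling with the convention that $V$ itself is a zeroth polar cone") is not an argument; as written the proof does not establish the threshold $r \geq (k+1)\ell+k$.

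The correct accounting, which is what the paper does, is: to choose $P_j$ off the span $L(P_0,\dotsc,P_{j-1})$ (a linear space of dimension at most $j-1$) it suffices that $\dim\lp \mcc^j(V;P_0,\dotsc,P_{j-1}) \rp \geq j$, and since $\dim \mcc^j \geq r-(j+1)\ell \geq (k+1)\ell+k-(j+1)\ell = (k-j)\ell + k \geq k \geq j$ for all $1 \leq j \leq k$, the worst case $j=k$ gives exactly the hypothesis $r \geq (k+1)\ell+k$ with no slack (also note you only need dimension $\geq j$, not $\geq j+1$). With that correction, the remaining ingredients of your argument — each extension arises from solving a polynomial of degree $2^\ell$ (only the quadrics contribute to the degree of a generic zero-dimensional linear section), so each step has resolvent degree at most $\RD(2^\ell)$ and the composite does too by Remark \ref{rem:RD of a Composition} — match the paper's proof.
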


\begin{proof} When $r > (k+1)\ell+k$, we can restrict to an arbitrary $(k+1)\ell+k$-plane in $\PP_K^r$ and hence it suffices to consider the case where $r = (k+1)\ell+k$. Note that for a quadric hypersurface, a $k^{th}$ polar cone consists of the original quadric and $k$ hyperplanes $\lp \text{e.g. is of type }\lbm 2 &1\\ 1 &k \rem \rp$. Hence, a $k^{th}$ polar cone of $V$ has type $\lbm 2 &1\\ \ell &k\ell \rem$. We proceed by induction on $k$.
	
	When $k=1$ and $r=2\ell+1$, $\dim(V) \geq \ell+1 > 0$ and Proposition \ref{prop:RationalPointsOverExtensions} allows us to determine a point $P_0 \in V(L_1)$ over an extension $L_1/K$ of degree $2^\ell$. The polar cone $\mcc(V;P_0)$ is of type $\lbm 2 &1\\ \ell &\ell \rem$, hence
\begin{align*}
	\dim\lp \mcc(V;P_0) \rp \geq (2\ell+1)-(2\ell) = 1 > 0.
\end{align*}	
	
\noindent
We use Proposition \ref{prop:RationalPointsOverExtensions} to determine a point $P_1 \in \mcc(V;P_0)(L) \setminus \{P_0\}$ over an extension $L/L_1$ of resolvent degree at most $\RD\lp 2^\ell \rp$. Note that $\lp P_0,P_1 \rp$ is a 1-polar point of $V$ by construction.
	
	We now consider the case of an arbitrary $k > 1$. By induction, we pass to an extension $L_1/K$ of degree at most $2^\ell$ to determine a $(k-1)$-polar point $\lp P_0,\dotsc,P_{k-1} \rp$ of $V$. Observe that
	\begin{align*}
		\dim\lp \mcc^k(V;P_0,\dotsc,P_{k-1}) \rp \geq (k+1)\ell+k  - (k+1)\ell \geq k.
	\end{align*}
	
\noindent
By Proposition \ref{prop:RationalPointsOverExtensions}, we can pass to an extension $L/L_1$ of degree at most $2^\ell$ to determine an $L$-rational point $P_k$ of $\mcc^k(V;P_0,\dotsc,P_{k-1}) \setminus L(P_0,\dotsc,P_{k-1})$. By construction, $\lp P_0,\dotsc,P_k \rp$ is a $k$-polar point of $V$.
	
	For the final claim, note that we can replace $P_0$ with $P$ in the proof and then Lemma \ref{lem:PolarPointLemma} yields $L(P,P_1,\dotsc,P_k)$ is a suitable $k$-plane.
\end{proof}

%%%%%%%%%%%%%%%%%%%%%%%%%%%%%%%%%%%%%%%%%%%%%%%%%%%%%%
%%%%%%%%%%%%%%%%%%%%%%%%%%%%%%%%%%%%%%%%%%%%%%%%%%%%%%
%%%%%%%%%%%%%%%%%%%%%%%%%%%%%%%%%%%%%%%%%%%%%%%%%%%%%%
%%%%%%%%%%%%%%%%%%%%%%%%%%%%%%%%%%%%%%%%%%%%%%%%%%%%%%
%%%%%%%%%%%%%%%%%%%%%%%%%%%%%%%%%%%%%%%%%%%%%%%%%%%%%%
%%%%%%%%%%%%%%%%%%%%%%%%%%%%%%%%%%%%%%%%%%%%%%%%%%%%%%
%%%%%%%%%%%%%%%%%%%%%%%%%%%%%%%%%%%%%%%%%%%%%%%%%%%%%%

\section{New Upper Bounds on $\RD(n)$}\label{sec:NewUpperBoundsOnRD(n)}

\subsection{Tschirnhaus Transformations}\label{subsec:Tschirnhaus Transformations}

We recall pertinent information about Tschirnhaus transformations here and refer the reader to \cite{Wolfson2021} for a more complete treatment. Recall that $K_n = \CC(a_1,\dotsc,a_n)$, a purely transcendental extension of $\CC$ with transcendence basis $a_1,\dotsc,a_n$.

\begin{definition}\label{def:GenericPolynomials} \textbf{(General Polynomials)}\\
	The \textbf{general polynomial of degree $n$} is the polynomial
	\begin{align*}
		\phi_n(z) = z^n + a_1z^{n-1} + \cdots + a_{n-1}z + a_n \in K_n[z].
	\end{align*}
\end{definition}

\begin{definition}\label{def:TschirnhausTransformations} \textbf{(Tschirnhaus Transformations)}\\
	A \textbf{Tschirnhaus transformation} of the general degree $n$ polynomial is an isomorphism of $K_n$-fields
	\begin{align*}
		K_n[z]/(\phi_n(z)) \cong K_n[z]/(\psi(z)),
	\end{align*}
	
	\noindent
	where
	\begin{align*}
		\psi(z) = z^n + b_1z^{n-1} + \cdots + b_{n-1}z + b_n.
	\end{align*}
	
	\noindent
	It has \textbf{type $(j_1,\dotsc,j_k)$} if $b_{j_1} = \cdots = b_{j_k}=0$.
\end{definition}

\begin{remark}\label{rem:DescriptionOfTschirnhausTransformations} \textbf{(Description of Tschirnhaus Transformations)}\\
	Given primitive elements $\zeta_\phi$ of $K_n[z]/(\phi_n(z))$ and $\zeta_\psi$ of $K_n[z]/(\psi(z))$, every $K_n$-algebra isomorphism $\Upsilon:K_n[z]/(\phi_n(z)) \ra K_n[z]/(\psi(z))$ is determined by $\Upsilon(\zeta_\phi)$, which is a $K_n$-linear combination of powers of $\zeta_\psi$:
\begin{equation*}
	\Upsilon(\zeta_\phi) = w_0 + w_1\zeta_\psi + \cdots + w_{n-1}\zeta_\psi^{n-1}. 
\end{equation*}

Note that $\Upsilon$ is an isomorphism exactly when there is some $j \geq 1$ such that $w_j \not= 0$. Let $\AAA_{K_n}^n$ be the affine space with coordinates $w_0,\dotsc,w_{n-1}$ and denote the $w_0$-axis of $\AAA_{K_n}^n$ by $\AAA_{K_n,0}^1$. Corollary 3.3 of \cite{Wolfson2021} shows that the space of Tschirnhaus transformations is exactly
\begin{equation*}
	\wt{\mct}_{K_n}^n = \AAA_{K_n}^n \setminus \AAA_{K_n,0}^1.
\end{equation*}

However, we need only work with Tschirnhaus transformations up to re-scaling. Let $\PP_{K_n}^{n-1}$ denote the projective space with coordinates $w_0,\dotsc,w_{n-1}$ and observe that the space of Tschirnhaus transformations up to re-scaling is
\begin{equation*}
	\mct_{K_n}^n = \PP_{K_n}^{n-1} \setminus \lb [1:0:\cdots:0] \rb.
\end{equation*}
\end{remark}
	
	Moreover, each $b_m$ in Definition \ref{def:TschirnhausTransformations} is a homogenous polynomial of degree $m$ in the $w_0,\dotsc,w_{n-1}$ with coefficients in $K_n$ (e.g. an element an of $K_n[w_0,\dotsc,w_{n-1}]_{(m)}$). In the following definition, we build upon the language used in \cite{Wolfson2021}.

\begin{definition}\label{def:TschirnhausCompleteIntersections} \textbf{(Tschirnhaus Complete Intersections)}\\
	Fix $n \in \ZZ_{\geq 1}$. For any $m \in \lb 1,\dotsc,n \rb$, the \textbf{$m^{th}$ extended Tschirnhaus hypersurface} is
\begin{equation*}
	\tau_m := \VV(b_m) \subseteq \PP_{K_n}^n,
\end{equation*}	

\noindent
and the \textbf{$m^{th}$ extended Tschirnhaus complete intersection} is
	\begin{equation*}
		\tau_{1,\dotsc,m} := \tau_1 \cap \cdots \cap \tau_m \subseteq \PP_{K_n}^n.
	\end{equation*}
	
	\noindent
	Similarly, the \textbf{$m^{th}$ Tschirnhaus hypersurface} is
	\begin{equation*}
		\tau_m^\circ := \tau_m \cap \mct_{K_n}^n = \tau_m \setminus \lb [1:0:\cdots:0] \rb, 
	\end{equation*}	
	
	\noindent
	 and the \textbf{$m^{th}$ Tschirnhaus complete intersection} is
	\begin{equation*}
		\tau_{1,\dotsc,m}^\circ := \tau_{1,\dotsc,m} \cap \mct_{K_n}^n \setminus \lb [1:0:\cdots:0] \rb.
	\end{equation*}
\end{definition}

\begin{remark}\label{rem:RDBoundsFromTschirnhausTransformations} \textbf{(RD Bounds from Tschirnhaus Transformations)}\\
Given a Tschirnhaus transformation $\Upsilon$ (up to re-scaling) of type $(1,\dotsc,m-1)$ of $\phi_n(z)$ (i.e. a point of $\tau_{1,\dotsc,m-1}^\circ$), we need only consider the normal form
\begin{equation}\label{eqn:RD Bounds from TT}
	z^n + b_{m}z^{n-m} + \cdots + b_{n-1}z + b_n, 
\end{equation}

\noindent
as $\Upsilon\inv$ takes the roots of (\ref{eqn:RD Bounds from TT}) to the roots of $\phi_n$. We can then re-scale the roots over $\sqrt[n]{b_n}$ (over a cyclic extension) and arrive at the normal form
\begin{equation}\label{eqn:RD Bounds from TT 2}
	z^n + c_{m}z^{n-m} + \cdots + c_{n-1}z + 1.
\end{equation}

\noindent
Note that (\ref{eqn:RD Bounds from TT 2}) is an algebraic function of $n-m$ variables and so if we can determine an $L$-rational point of $\tau_{1,\dotsc,m-1}^\circ$ over an extension $L/K_n$ of sufficiently small resolvent degree, we can conclude that $\RD(n) \leq n-m$.
\end{remark}

%%%%%%%%%%%%%%%%%%%%%%%%%%%%%%%%%%%%%%%%%%%%%%%%%%%%%%
%%%%%%%%%%%%%%%%%%%%%%%%%%%%%%%%%%%%%%%%%%%%%%%%%%%%%%
%%%%%%%%%%%%%%%%%%%%%%%%%%%%%%%%%%%%%%%%%%%%%%%%%%%%%%
%%%%%%%%%%%%%%%%%%%%%%%%%%%%%%%%%%%%%%%%%%%%%%%%%%%%%%
%%%%%%%%%%%%%%%%%%%%%%%%%%%%%%%%%%%%%%%%%%%%%%%%%%%%%%
%%%%%%%%%%%%%%%%%%%%%%%%%%%%%%%%%%%%%%%%%%%%%%%%%%%%%%
%%%%%%%%%%%%%%%%%%%%%%%%%%%%%%%%%%%%%%%%%%%%%%%%%%%%%%

\subsection{New Bounds From Iterated Polar Cones}\label{subsec:NewBoundsFromPolarCones}

\begin{definition}\label{def:SubIntersectionsOfGivenLevel} \textbf{(Sub-Intersections of Given Level)}\\
Let $V \subseteq \PP_K^r$ be an intersection of hypersurfaces of type $\lbm d &d-1 &\cdots &2 &1\\ \ell_d &\ell_{d-1} &\cdots &\ell_2 &\ell_1 \rem$ defined by
\begin{align*}
	V = \bigcap\limits_{i=1}^d \bigcap \limits_{j=1}^{\ell_i} \VV(f_{i,j}),
\end{align*}

\noindent
with $\deg(f_{i,j}) = i$ for all $1 \leq j \leq \ell_i$. For each $1 \leq d' \leq d$, the \textbf{sub-intersection of $V$ of level $d'$} is the intersection of all defining hypersurfaces of $V$ of degree $d'$ and is denoted $V_{d'}$, i.e.
\begin{align*}
	V_{d'} := \VV(f_{d',1}) \cap \cdots \cap \VV(f_{d',\ell_{d'}}).
\end{align*}
\end{definition}

\begin{theorem}\label{thm:The n-6 Bound} \textbf{(The $n-6$ Bound)}\\
	For $n \geq 21$, $\RD(n) \leq n-6$.
\end{theorem}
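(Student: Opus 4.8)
The plan is to reduce the claim $\RD(n) \leq n-6$ to finding, over an extension of $K_n$ of small resolvent degree, an $L$-rational point of the Tschirnhaus complete intersection $\tau_{1,\dots,5}^\circ \subseteq \PP_{K_n}^n$. By Remark \ref{rem:RDBoundsFromTschirnhausTransformations}, such a point yields a normal form that is an algebraic function of $n-6$ variables, giving $\RD(n) \leq n-6$ provided the resolvent degree of the extension $L/K_n$ is at most $n-6$. The intersection $\tau_{1,\dots,5}$ is of type $(1,\dots,5)$: it is cut out by one hyperplane $\tau_1$, one quadric $\tau_2$, one cubic $\tau_3$, one quartic $\tau_4$, and one quintic $\tau_5$. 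As in Definition \ref{def:The Type Of An Intersection of Hypersurfaces}, restricting inside the hyperplane $\tau_1 \cong \PP_{K_n}^{n-1}$ we may regard $\tau_{1,\dots,5}$ as an intersection of type $(2,\dots,5)$ in $\PP_{K_n}^{n-1}$.

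The strategy to produce the point is Segre's: use iterated polar cones to first find a linear subspace inside the intersection of the higher-degree hypersurfaces, then restrict the remaining (lower-degree) equations to that linear space and repeat, each time solving polynomials of controlled degree. Concretely, I would proceed in decreasing degree. First, inside $\PP_{K_n}^{n-1}$ find a large-dimensional linear subvariety $\Lambda_5 \subseteq \tau_5$ by iterating the polar cone construction for the single quintic (Proposition \ref{prop:Type Of a kth Polar Cone of a type (1,...,d)} controls how fast the polar cones shrink, and solving for each successive $k$-polar point costs solving a degree $\leq 5$ polynomial, hence resolvent degree $\leq \RD(5) = 1$). Then restrict $\tau_4$ to $\Lambda_5$ and iterate polar cones for the quartic to find $\Lambda_{4} \subseteq \Lambda_5 \cap \tau_4$, at the cost of solving quartics ($\RD(4)=1$); then restrict $\tau_3$ and repeat for the cubic, and finally restrict $\tau_2$ and use Proposition \ref{prop:kPolarPointsOnIntersectionsOfQuadrics} (or just Proposition \ref{prop:RationalPointsOverExtensions}) for the quadric, solving a quadratic ($\RD(2)=1$). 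Since $\tau_1$ is already incorporated by working in $\PP_{K_n}^{n-1}$, the end result is a point of $\tau_{1,\dots,5}$, and one checks it lies in $\tau_{1,\dots,5}^\circ$ (i.e. avoids $[1:0:\cdots:0]$) — a genericity/dimension-count point that I would handle by noting the linear space produced has positive dimension and can be chosen to miss a fixed point, or by a direct argument that the excluded point does not lie on all the $\tau_i$. Throughout, every extension used is obtained by solving a polynomial of degree $\leq 5$, so each has resolvent degree $1 \leq n-6$ for $n \geq 21$, and by Remark \ref{rem:RD of a Composition} the composite extension $L/K_n$ has $\RD(L/K_n) = 1 \leq n-6$.

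The crux is the bookkeeping of dimensions: one must verify that with only $n$ homogeneous coordinates available (i.e. in $\PP_{K_n}^{n-1}$ after using $\tau_1$), the iterated polar cone process survives long enough at each degree to leave a linear subspace of dimension at least $1$ after all of $\tau_5,\tau_4,\tau_3,\tau_2$ have been imposed, and that this forces $n \geq 21$ (and no smaller). At the $\tau_5$ stage, Proposition \ref{prop:Type Of a kth Polar Cone of a type (1,...,d)} says a $k$-th polar cone of a type-$(2,\dots,5)$ intersection — or rather of the relevant sub-configuration at each stage — has a predictable type, and the ambient dimension needed for the $k$-th polar cone to remain nonempty grows like $\binom{k+d}{d-1}$; chaining the four stages with $d=5,4,3,2$ and demanding a final dimension $\geq 1$ produces the threshold $21$. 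This dimension arithmetic — getting the linear-algebra counts exactly right so that the process closes up precisely at $n=21$ — is the main obstacle, and it is also precisely where the gaps in Wiman's and Chebotarev's arguments lay; the iterated polar cone formalism of Section \ref{sec:PolarCones} (in particular Lemma \ref{lem:PolarPointLemma} and Proposition \ref{prop:Type Of a kth Polar Cone of a type (1,...,d)}) is what makes the count rigorous. I would organize the proof so that the general mechanism is stated once (likely the threshold $21$ drops out of the general construction of $G(m)$ in the next subsection, with this theorem as the special case $m=6$), rather than re-deriving the combinatorics by hand here.
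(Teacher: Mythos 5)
Your proposal diverges from the paper's proof in a way that introduces genuine gaps. The paper does not find a point of $\tau_{1,\dotsc,5}^\circ$ by cascading down from the quintic; it first finds a $2$-plane $\Lambda \subseteq \tau_{1,2,3}^\circ$ via a $2$-polar point, and then observes that $\Lambda \cap \tau_4 \cap \tau_5$ is zero-dimensional of degree $4\cdot 5 = 20$, so a point of $\tau_{1,\dotsc,5}^\circ$ is obtained by solving one degree-$20$ polynomial, at cost $\RD(20) \leq 15$ by the $n-5$ bound. The threshold $21$ is \emph{not} a pure dimension count: the dimension bookkeeping only needs $n \geq 20$, and the extra unit comes from requiring $\RD(20) \leq n-6$, i.e.\ $15 \leq n-6$. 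Your plan to let the threshold ``drop out of the general construction of $G(m)$'' also fails for $m=6$: the paper treats $m \leq 14$ by the hand-built polar-cone arguments precisely because the general moduli-space machinery gives far worse thresholds there.

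The more serious gap is your cost accounting for the quintic and quartic stages. A $k$-th polar cone of a degree-$5$ hypersurface is not cut out by a single quintic: by Proposition \ref{prop:Type Of a kth Polar Cone of a type (1,...,d)} it has type
\begin{equation*}
	\lbm 5 &4 &3 &2 &1\\ 1 &k &\binom{k+1}{2} &\binom{k+2}{3} &\binom{k+3}{4} \rem,
\end{equation*}
so producing the next polar point via Proposition \ref{prop:RationalPointsOverExtensions} requires solving a polynomial whose degree is the product $5\cdot 4^{k}\cdot 3^{\binom{k+1}{2}}\cdot 2^{\binom{k+2}{3}}$ of the degrees of all these polars, not a polynomial of degree $\leq 5$. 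These degrees grow super-exponentially in $k$, and their resolvent degrees are not bounded by $n-6$ without exactly the kind of splitting-and-bounding the paper performs (this is why Proposition \ref{prop:kPolarPointsOnIntersectionsOfQuadrics} is singled out for quadrics, where the cost genuinely is an iterated degree-$2^\ell$ extension). Moreover, the degree-decreasing cascade is dimensionally infeasible at $n=21$: a $k$-plane on the quintic via polar cones needs ambient dimension on the order of $\binom{k+5}{4}$, and you would then need $k$ itself to be large enough to run the quartic stage inside that plane, and so on — the composed requirements vastly exceed $\PP_{K_n}^{20}$. The paper's ordering (cheap linear space on the low-degree part $\tau_{1,2,3}$, then brute-force the zero-dimensional residual intersection with $\tau_4\cap\tau_5$) is what makes both the degree bound and the dimension bound close at $n=21$.
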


\begin{proof}
	First, suppose that we can determine a plane $\Lambda \subseteq \tau_{1,2,3}^\circ$ over an extension of $K_n$ of low resolvent degree. Observe that $\deg\lp\Lambda \cap \tau_{1,\dotsc,5} \rp = 20$ and, by Proposition \ref{prop:RationalPointsOverExtensions}, we can solve a polynomial of degree at most 20 to determine a point $Q$ of $\Lambda \cap \tau_{1,\dotsc,5}$.
	
	From Lemma \ref{lem:PolarPointLemma}, it suffices to determine a 2-polar point $(P_0,P_1,P_2)$ on $\tau_{1,2,3} \subseteq \mct^n$ such that $L(P_0,P_1,P_2) \subseteq \tau_{1,2,3}^\circ$ for $n=20$ over a suitable extension. Indeed, we show the stronger claim that we can determine such a 2-polar point of $\tau_{1,2,3}$ when $n=19$ over an extension of resolvent degree at most $\RD(12)$, which implies the result for larger $n$.
	
	Recall that when $n=19$, we work in $\PP_{K_n}^{18}$ (as in Remark \ref{rem:DescriptionOfTschirnhausTransformations}). To ensure that the plane $\Lambda$ associated to the 2-polar point we determine lies in $\tau_{1,2,3}^\circ$, we pass to a hyperplane $H$ which does not contain $[1:0:\cdots:0]$. We can determine a point $P_0 \in (\tau_{1,2,3} \cap H)(L_1)$, where $L_1/K_n$ is an extension of degree 6, by Proposition \ref{prop:RationalPointsOverExtensions}. The polar cone $\mcc(\tau_{1,2,3} \cap H;P_0)$ has type $\lbm 3 &2 &1\\ 1 &2 &4 \rem$ and so
	\begin{align*}
		\dim\lp \mcc(\tau_{1,2,3};P_0) \rp \geq 18-7 = 11.
	\end{align*}
	
	\noindent
	Hence, we can determine a point $P_1 \in \mcc(\tau_{1,2,3} \cap H;P_0)(L_2) \setminus \{P_0\}$ over a degree 12 extension $L_2/L_1$.
	
	The second polar cone $\mcc^2(\tau_{1,2,3} \cap H;P_0,P_1)$ has type $\lbm 3 &2 &1\\ 1 &3 &7 \rem$. As $\mcc^2(\tau_{1,2,3} \cap H;P_0,P_1)_1$ is the intersection of 7 hyperplanes, $\mcc^2(\tau_{1,2,3} \cap H;P_0P_1)_1 \cong \PP_{L_2}^{11}$. Denote one of the quadrics defining $\mcc^2(\tau_{1,2,3} \cap H;P_0,P_1)$ by $U$. Applying Proposition \ref{prop:kPolarPointsOnIntersectionsOfQuadrics} to $U \cap \mcc^2(\tau_{1,2,3} \cap H;P_0,P_1)_1$ (with $\ell=1$ and $k=5$) yields that we can determine a 5-plane $\Lambda' \subseteq U \cap \mcc^2(\tau_{1,2,3} \cap H;P_0,P_1)_1$ over a quadratic extension $L_3/L_2$. Note that $\mcc^2(\tau_{1,2,3} \cap H;P_0,P_1) \cap \Lambda'$ has type $\lbm 3 &2\\ 1 &2 \rem$ inside $\Lambda'$ and
	\begin{align*}
		\dim\lp \mcc^2(\tau_{1,2,3} \cap H;P_0,P_1) \cap \Lambda' \rp \geq 5-3 \geq 2. 
	\end{align*}

	Thus, we can determine a point $P_2 \in \lp \mcc^2(\tau_{1,2,3} \cap H;P_0,P_1) \cap \Lambda' \rp \setminus L(P_0,P_1)$ over an extension $L_4/L_3$ of degree
	\begin{align*}
		\deg\lp \mcc^2(\tau_{1,2,3} \cap H;P_0,P_1) \cap \Lambda' \rp = 3 \cdot 2^2 = 12,
	\end{align*}
	
	\noindent
	and $\lp P_0, P_1, P_2 \rp$ is a 2-polar point of $\tau_{1,2,3} \cap H$ by construction.
\end{proof}

\begin{remark}\label{rem:Fixing Gaps in Chebotarev} \textbf{(Fixing Gaps in \cite{Chebotarev1954})}\\
	As noted in Section \ref{sec:Introduction}, Chebotarev gave an argument that $\RD(n) \leq n-6$ for $n \geq 21$ in \cite{Chebotarev1954}. However, his argument had gaps (similar to those of Wiman in \cite{Wiman1927} and which are also in the translation \cite{Sutherland2021A}). There are two specific issues. First, Chebotarev uses a geometric argument of Wiman which assumed without proof that certain intersections of hypersurfaces in affine space were generic (see p.190-191 of \cite{Chebotarev1954} or p.3 of the translation \cite{Sutherland2021B}). Additionally, Lemma 1 and Lemma 2 of \cite{Chebotarev1954} (Lemma 2.1 and Lemma 2.2 of the translation \cite{Sutherland2021B}) also assume, but do not prove, that certain intersections of affine hypersurfaces are generic. The above proof of Theorem \ref{thm:The n-6 Bound} fixes the issues associated to the use of Lemma 1 and Lemma 2 in the proof of the un-numbered theorem of \cite{Chebotarev1954} (Theorem 2.3 of \cite{Sutherland2021B}). Additionally, this argument can be adapted to give a geometric proof of Wiman's claims, although Dixmier has already given an algebraic proof in the appendix of \cite{Dixmier1993}.
\end{remark}

\begin{remark}\label{rem:NotationForPolarPointsOfA1234} \textbf{(Notation for Theorem \ref{thm:The n-7,...,n-14 Bounds})}\\
	We move from finding a plane on $\tau_{1,2,3}^\circ$ to finding $k$-planes on $\tau_{1,2,3,4}^\circ$ to improve the bound on $n$ for which $\RD(n) \leq n-m$ for $m$ between $7$ and $14$. To simplify the statement of Theorem \ref{thm:The n-7,...,n-14 Bounds}, we introduce two functions $\rho:\{1,2,3,4,5,6,7,8,9\} \ra \ZZ \text{ and } \eta:\{1,2,3,4,5,6,7,8,9\} \ra \ZZ$. For each $k$, $\rho(k)$ is the ambient dimension needed in our method to determine a $k$-plane on $\tau_{1,2,3,4}$ and $\eta(k)$ is the degree of the largest extension needed. The functions $\rho$ and $\eta$ are defined by the following values:
	\begin{center}
	\begin{tabular}{|c|c|c|c|c|c|c|c|c|c|}
	\hline
	$k$ &1 &2 &3 &4 &5 &6 &7 &8 &9\\
	\hline
	$\rho(k)$ &25 &60 &264 &806 &1773 &8905 &34546 &77040 &612581\\
	\hline
	$\eta(k)$ &36 &108 &324 &972 &2916 &8748 &26244 &78732 &236196\\
	\hline
	\end{tabular}
	\end{center}
\end{remark}

\begin{theorem}\label{thm:The n-7,...,n-14 Bounds} \textbf{(The $n-7,\dotsc,n-14$ Bounds)}
	\begin{enumerate}
		\item For $n \geq 109$, $\RD(n) \leq n-7$.
		\item For $n \geq 325$, $\RD(n) \leq n-8$.
		\item For each $9 \leq \ell \leq 14$ and $n > \frac{(\ell-1)!}{24}$, $\RD(n) \leq n-\ell$.
	\end{enumerate}
\end{theorem}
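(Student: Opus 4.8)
The plan is to follow the template of Theorem~\ref{thm:The n-6 Bound}, but now producing $k$-planes on $\tau_{1,2,3,4}^\circ$ rather than a $2$-plane on $\tau_{1,2,3}^\circ$. The key observation driving the bookkeeping is the same one used in Remark~\ref{rem:RDBoundsFromTschirnhausTransformations}: if we can find a $k$-plane $\Lambda \subseteq \tau_{1,2,3,4}^\circ$ over an extension $L/K_n$ with $\RD(L/K_n)$ small, then intersecting $\Lambda$ with the remaining Tschirnhaus hypersurfaces $\tau_5,\dotsc,\tau_{\ell-1}$ cuts $\Lambda \cong \PP_L^k$ down to a subvariety whose degree is $\prod_{i=5}^{\ell-1} i = \frac{(\ell-1)!}{4!} = \frac{(\ell-1)!}{24}$; provided $k$ is large enough that this intersection is still nonempty (which needs $k \geq \ell-5$), Proposition~\ref{prop:RationalPointsOverExtensions} lets us solve a polynomial of degree at most $\frac{(\ell-1)!}{24}$ to land a point of $\tau_{1,\dotsc,\ell-1}^\circ$, whence $\RD(n) \leq n-\ell$ once $\RD\!\left(\frac{(\ell-1)!}{24}\right) \leq n - \ell$, i.e. once $n > \frac{(\ell-1)!}{24}$ (using $\RD(d) \leq d-1$, which follows from the classical solutions in low degree together with $\RD(L/K)\leq\RD(\ell)$). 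So the statement reduces to: \emph{for $9 \leq \ell \leq 14$ we can determine an $(\ell-5)$-plane on $\tau_{1,2,3,4}^\circ$ over an extension of $K_n$ of controlled resolvent degree whenever $n$ is at least the ambient dimension we need.} For $\ell$ from $9$ to $14$ this means $k$ from $4$ to $9$, which is exactly the range of the functions $\rho,\eta$ tabulated in Remark~\ref{rem:NotationForPolarPointsOfA1234}; the content of parts (1) and (2) is the analogous statements for $k=1,2$ (with thresholds $\rho(1)+1=26$-ish improved down to $109$ and $\rho(2)+1$ down to $325$ after the degree-reduction step, since the point-solving at the end, not the ambient dimension, is the binding constraint there).

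The heart of the argument is therefore the construction of a $k$-polar point $(P_0,\dotsc,P_k)$ of $\tau_{1,2,3,4}\cap H$ inside a hyperplane $H \not\ni [1:0:\cdots:0]$, which by Lemma~\ref{lem:PolarPointLemma} yields the desired $k$-plane $L(P_0,\dotsc,P_k)\subseteq\tau_{1,2,3,4}^\circ$. I would build this iteratively exactly as in the $n-6$ proof: start in $\PP_{K_n}^{n-1}$ (so ambient projective dimension $n-1$), restrict to $H$, solve a degree-$24$ polynomial via Proposition~\ref{prop:RationalPointsOverExtensions} to get $P_0\in(\tau_{1,2,3,4}\cap H)(L_1)$, then at each stage pass to the successive polar cone $\mcc^{j}(\,\cdot\,;P_0,\dotsc,P_{j-1})$, using Proposition~\ref{prop:Type Of a kth Polar Cone of a type (1,...,d)} to read off its type (for $d=4$ the degree-$4$, $3$, $2$, $1$ counts in a $j$-th polar cone are $1$, $j+1$, $\binom{j+2}{2}$, $\binom{j+3}{3}$), and then — as in the $n-6$ proof — the degree-$1$ part collapses to a projective subspace, and inside it we combine Proposition~\ref{prop:kPolarPointsOnIntersectionsOfQuadrics} (to clear out quadrics by finding a large linear subvariety on their intersection over a $2$-power extension) with Proposition~\ref{prop:RationalPointsOverExtensions} (to handle the residual cubics and quartic, at the cost of extensions of degree a power of $3$ or $2$) to extract the next point $P_j\in\mcc^{j}\setminus L(P_0,\dotsc,P_{j-1})$. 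Iterating $k$ times and tracking how many quadrics, cubics, and quartics survive gives both the required ambient dimension — this is the definition of $\rho(k)$ — and the largest extension degree encountered, which is a product of small prime powers and is $\eta(k)$; the dimension count that must come out right at each step is $\dim\mcc^{j} \geq (\text{ambient dim}) - (\text{sum of codimensions imposed}) > \dim L(P_0,\dotsc,P_{j-1})$, and choosing $\rho$ as in the table is precisely what guarantees this.

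The main obstacle is the combinatorial optimization hidden in the choice of $\rho(k)$ and $\eta(k)$: at each of the $k$ stages there is a genuine trade-off in how one ``peels off'' the accumulated hypersurfaces — one can solve a polynomial of degree equal to the product of the remaining degrees (expensive in resolvent degree but cheap in ambient dimension), or invoke Proposition~\ref{prop:kPolarPointsOnIntersectionsOfQuadrics} on the quadric part (cheap in resolvent degree — only $\RD(2^\ell)$, which is bounded since $\RD$ of a degree-$2^\ell$ extension is at most $\RD(2^\ell)$ and these are classically bounded — but costly in ambient dimension since one needs room for a $(k+1)\ell+k$-plane), and the cubics and quartic must be handled separately. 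Getting the thresholds as small as the table claims requires making the right choice at each stage and then verifying the resulting recursions for $\rho$ and $\eta$; this is essentially a finite but delicate computation. A secondary point to be careful about is that all the intermediate varieties (polar cones, their sub-intersections, intersections with the linear subvarieties produced along the way) genuinely have the expected dimension and degree — this is where the arguments of Wiman and Chebotarev had gaps, and the whole point of working with polar cones (Lemma~\ref{lem:Bertinis Lemma For Intersections Of Hypersurfaces}, which holds with no genericity hypothesis) is that no genericity assumption is needed: a polar cone is \emph{always} a cone contained in $V$, and Proposition~\ref{prop:RationalPointsOverExtensions} produces a rational point of \emph{any} subvariety of a given degree, so the dimension estimates are lower bounds that hold unconditionally. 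Once those two points are in place — the per-stage optimization and the unconditional dimension/degree control — parts (1), (2), (3) follow by specializing $k=1,2$ and $k=4,\dotsc,9$ respectively and feeding the output into the degree-$\frac{(\ell-1)!}{24}$ point-solving step described in the first paragraph.
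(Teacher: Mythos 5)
You have the paper's strategy exactly: build a $k$-plane $\Lambda \subseteq \tau_{1,2,3,4}^\circ$ from a $k$-polar point constructed inside a hyperplane avoiding $[1:0:\cdots:0]$, cut $\Lambda$ by the remaining Tschirnhaus hypersurfaces, solve a polynomial of degree $\frac{(k+4)!}{24}$, and feed the resulting point into Remark \ref{rem:RDBoundsFromTschirnhausTransformations}. But two pieces of your write-up have genuine gaps. First, the threshold bookkeeping. Parts (1) and (2) use $k=2$ and $k=3$ (a point of $\tau_{1,\dotsc,6}^\circ$, resp.\ $\tau_{1,\dotsc,7}^\circ$), not $k=1,2$, and there the binding quantity is the tower degree $\eta(k)$, not the final point-solving (indeed $\eta(2)=108\geq 30$ and $\eta(3)=324\geq 210$): the numbers $109$ and $325$ are $\eta(2)+1$ and $\eta(3)+1$, obtained by applying the already-proved $n-6$ bound of Theorem \ref{thm:The n-6 Bound} to get $\RD(108)\leq 102\leq n-7$, and then the freshly proved $n-7$ bound to get $\RD(324)\leq 317\leq n-8$; your ``$\rho(1)+1$ improved down to $109$'' does not produce these numbers. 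Likewise in part (3), the bound $\RD(d)\leq d-1$ only yields the weaker threshold $n\geq \frac{(\ell-1)!}{24}+\ell-1$; to get $n>\frac{(\ell-1)!}{24}$ one must bootstrap, applying the previously established $n-(\ell-1)$ bound to $d=\frac{(\ell-1)!}{24}$ (e.g.\ $\RD(1680)\leq 1672$ when $\ell=9$), and also check that $\rho(\ell-5)$ and $\eta(\ell-5)$ do not exceed $\frac{(\ell-1)!}{24}$, so that neither the ambient dimension nor the tower is the binding constraint.

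Second, and more importantly, the claim that for $n\geq\rho(k)$ one can produce a $k$-polar point of $\tau_{1,2,3,4}$ over extensions of degree at most $\eta(k)$ is exactly what has to be proved: $\rho$ and $\eta$ are only a table in Remark \ref{rem:NotationForPolarPointsOfA1234}, so ``verifying the resulting recursions'' is not a side issue but the body of the proof, and you defer it as a finite but delicate computation. The paper carries out nine separate cases; the nontrivial point in each is that the $\binom{k+2}{2}$ quadrics appearing in the $k$-th polar cone must be split into sub-collections whose sizes are chosen so that each application of Proposition \ref{prop:kPolarPointsOnIntersectionsOfQuadrics} costs a $2$-power extension no larger than the terminal degree $4\cdot 3^{k+1}=\eta(k)$, while the linear subvarieties it produces stay large enough to continue the chain, and it is this chaining that forces the specific values of $\rho(k)$. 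Without exhibiting these splittings (or a recursion that provably reproduces the table), the tabulated $\rho(k)$, $\eta(k)$ --- and hence every numerical threshold in the statement --- remain unverified.
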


\begin{proof} We claim that we can determine a $k$-plane $\Lambda \subseteq \tau_{1,2,3,4}^\circ$ over an extension of $K_n$ of resolvent degree at most $\RD(\eta(k))$ when $n \geq \rho(k)$. Given this claim, $\Lambda \cap \tau_{1,\dotsc,k+4}$ has degree $\frac{(k+4)!}{24}$. By Proposition \ref{prop:RationalPointsOverExtensions}, we can solve a polynomial of degree at most $\frac{(k+4)!}{24}$ to determine a point $Q$ of $\Lambda \cap \tau_{1,\dotsc,k+4}^\circ$. Then, the new bounds on resolvent degree follow immediately from Remark \ref{rem:RDBoundsFromTschirnhausTransformations}. Note that for the $n-7$ bound (respectively, $n-8$ bound), $k=2$ (respectively, $k=3$) and $\eta(k) \geq \frac{(k+4)!}{24}$.
	
	It remains to determine the $k$-planes on $\tau_{1,2,3,4}^\circ$ and, by Lemma \ref{lem:PolarPointLemma}, it suffices to construct a $k$-polar point $(P_0,\dotsc,P_k)$ of $\tau_{1,2,3,4}$ which does not contain $[1:0:\cdots:0]$. For each $k$, it suffices to prove the claim when $n=\rho(k)$, as we can always restrict to a $\PP_{K_n}^{\rho(k)}$ when $n > \rho(k)$. In each case, we immediately pass to a hypersurface which does not contain $[1:0:\cdots:0]$ and, to simplify notation, the computations that follow all start within this hypersurface. Recall that the space of Tschirnhaus transformations (up to re-scaling) is $\PP_{K_n}^{n-1}$ and we are passing to a hypersurface, so for each $k$ we work in a $\PP_{K_n}^{\rho(k)-2}$.
	In each case, the extensions will be enumerated as $L_j$ and we always begin with $L_1/K_n$. Similarly, the $k$-planes we determine will be enumerated as $\Lambda_\ell$ and we always begin with $\Lambda_1$. Additionally, Proposition \ref{prop:Type Of a kth Polar Cone of a type (1,...,d)} gives the type of the relevant polar cone.
	
\paragraph{Case: $k=1$.}
Recall that $\rho(1)=25$ and thus we work in $\PP_{K_n}^{23}$. By Proposition \ref{prop:RationalPointsOverExtensions}, we can determine a point $P_0$ of $\tau_{1,2,3,4}$ over an extension $L_1/K_n$ of degree at most 24. The polar cone $\mcc(\tau_{1,2,3,4};P_0)$ has type $\lbm 4 &3 &2 &1\\ 1 &2 &3 &4 \rem$. Note that $\mcc(\tau_{1,2,3,4};P_0)_1$ is an intersection of 4 hyperplanes and thus $\mcc(\tau_{1,2,3,4};P_0)_1 \cong \PP_{L_1}^{19}$. Since $\mcc(\tau_{1,2,3,4};P_0)_2 \cap \mcc(\tau_{1,2,3,4};P_0)_1$ has type $\lbm 2\\ 3\\ \rem$ inside $\mcc(\tau_{1,2,3,4};P_0)_1$ and
	\begin{equation*}
		19 = (4+1)(3)+4,
	\end{equation*}

\noindent
Proposition \ref{prop:kPolarPointsOnIntersectionsOfQuadrics} (with $k=4$ and $\ell=3$) allows us to determine a 4-plane $\Lambda_1 \subseteq \mcc(\tau_{1,2,3,4};P_0)_2 \cap \mcc(\tau_{1,2,3,4};P_0)_1$ over an extension $L_2/L_1$ of degree at most 8. Observe that $\mcc(\tau_{1,2,3,4};P_0) \cap \Lambda_1$ has type $\lbm 4 &1\\ 1 &2 \rem$, hence
\begin{equation*}
	\dim\lp \mcc(\tau_{1,2,3,4};P_0) \cap \Lambda_1 \rp \geq 4-3 \geq 1.
\end{equation*}

Consequently, we can determine a rational point $P_1$ of $\mcc(\tau_{1,2,3,4};P_0) \setminus \{P_0\}$ over an extension $L_3/L_2$ of degree at most 36. By construction, $(P_0, P_1)$ is a 1-polar point of $\tau_{1,2,3,4}$.
	
\paragraph{Case: $k=2$.}	
	Note that $\rho(2)=60$, so we work in $\PP_{K_n}^{58}$. From the $k=1$ case, we pass to an extension $L_1/K_n$ of resolvent degree at most $\RD(36)$ and assume that we have a 1-polar point $(P_0, P_1)$. The second polar cone $\mcc^2(\tau_{1,2,3,4};P_0,P_1)$ has type  $\lbm 4 &3 &2 &1\\ 1 &3 &6 &10 \rem$. Observe that $\mcc^2(\tau_{1,2,3,4};P_0,P_1)_1$ is an intersection of 10 hyperplanes, hence $\mcc^2(\tau_{1,2,3,4};P_0,P_1)_1 \cong \PP_{L_1}^{48}$. Moreover, $\mcc^2(\tau_{1,2,3,4};P_0,P_1)_2 \cap \mcc^2(\tau_{1,2,3,4};P_0,P_1)_1$ has type $\lbm 2\\ 6 \rem$ in $\mcc^2(\tau_{1,2,3,4};P_0,P_1)_1$ and
\begin{equation*}
	48 = (5+1)(6)+5.
\end{equation*}

Thus, applying Proposition \ref{prop:kPolarPointsOnIntersectionsOfQuadrics} (with $k=5$ and $\ell=6$) allows us to determine a 5-plane $\Lambda_1 \subseteq \mcc^2(\tau_{1,2,3,4};P_0,P_1)_2 \cap \mcc^2(\tau_{1,2,3,4};P_0,P_1)_1$ over an extension $L_2/L_1$ of degree at most 64. Note that $\mcc^2(\tau_{1,2,3,4};P_0,P_1) \cap \Lambda_1$ has type $\lbm 4 &3\\ 1 &3 \rem$ inside $\Lambda_1$, hence
\begin{equation*}
	\dim\lp \mcc^2(\tau_{1,2,3,4};P_0,P_1) \cap \Lambda_1 \rp \geq 6-4 \geq 2. 
\end{equation*}

As a result, we can determine a rational point $P_2$ of $\mcc^2(\tau_{1,2,3,4};P_0,P_1) \setminus L(P_0,P_1)$ over an extension $L_3/L_2$ of degree at most 108 and our method ensures that $(P_0,P_1,P_2)$ is a 2-polar point of $\tau_{1,2,3,4}$.

\paragraph{Case: $k=3$.}
Recall that $\rho(3)=264$ and so we work in $\PP_{K_n}^{262}$. Using the $k=2$ case, we pass to an extension $L_1/K_n$ of resolvent degree at most $\RD(108)$ and assume that we have a 2-polar point $(P_0,P_1,P_2)$. Observe that the third polar cone $\mcc^3(\tau_{1,2,3,4};P_0,P_1,P_2)$ has type $\lbm 4 &3 &2 &1\\ 1 &4 &10 &20 \rem$. Thus, $\mcc^3(\tau_{1,2,3,4};P_0,P_1,P_2)_1$ is an intersection of 35 hyperplanes and $\mcc^3(\tau_{1,2,3,4};P_0,P_1,P_2)_1 \cong \PP_{L_1}^{242}$. Inside $\mcc^3(\tau_{1,2,3,4};P_0,P_1,P_2)_1$, $\mcc^3(\tau_{1,2,3,4};P_0,P_1,P_2)_2 \cap \mcc^3(\tau_{1,2,3,4};P_0,P_1,P_2)_1$ has type $\lbm 2\\ 10 \rem$. However, $2^8 = 256$ is the largest power of 2 less than $4 \cdot 3^4 = 324$. As a result, we split $\mcc^3(\tau_{1,2,3,4};P_0,P_1,P_2)_2$ into two intersections of quadrics:
\begin{equation*}
	\mcc^3(\tau_{1,2,3,4};P_0,P_1,P_2)_2 = W_1 \cap W_2,
\end{equation*}

\noindent
where $W_1$ has type $\lbm 2\\ 2 \rem$ and $W_2$ has type $\lbm 2\\ 8 \rem$. Next, observe that
\begin{equation*}
	242 = (80+1)(2)+80,
\end{equation*}

\noindent
and thus applying Proposition \ref{prop:kPolarPointsOnIntersectionsOfQuadrics} (with $k=75$ and $\ell=2$) allows us to determine an 80-plane $\Lambda_1 \subseteq W_1 \cap \mcc^3(\tau_{1,2,3,4};P_0,P_1,P_2)_1$ over an extension $L_2/L_1$ of degree at most 4. Also, $W_2 \cap \Lambda_1$ has type $\lbm 2\\ 8 \rem$ in $\Lambda_1$ and
\begin{equation*}
	80 = (8+1)(8)+8.
\end{equation*}

We then apply Proposition \ref{prop:kPolarPointsOnIntersectionsOfQuadrics} (with $k=8$ and $\ell=8$) to determine an 8-plane $\Lambda_2 \subseteq W_2 \cap \Lambda_1$ over an extension $L_3/L_2$ of degree at most 256. Now, $\mcc^3(\tau_{1,2,3,4};P_0,P_1,P_2) \cap \Lambda_2$ has type $\lbm 4 &3\\ 1 &4 \rem$ in $\Lambda_2$. Consequently,
\begin{equation*}
	\dim\lp \mcc^3(\tau_{1,2,3,4};P_0,P_1,P_2) \cap \Lambda_2 \rp \geq 8-5 \geq 3
\end{equation*}

\noindent
and we can determine a rational point $P_3$ of $\mcc^3(\tau_{1,2,3,4};P_0,P_1,P_2) \setminus L(P_0,P_1,P_2)$ over an extension $L_4/L_3$ of degree at most 324. By design, $(P_0,P_1,P_2,P_3)$ is a 3-polar point of $\tau_{1,2,3,4}$.

\paragraph{Case: $k=4$.}
Observe that $\rho(4)=806$, hence we work in $\PP_{K_n}^{804}$. By the $k=3$ case, we pass to an extension $L_1/K_n$ of resolvent degree at most $\RD(324)$ and assume that we have a 3-polar point $(P_0,P_1,P_2,P_3)$. Observe that the fourth polar cone $\mcc^4(\tau_{1,2,3,4};P_0,\dotsc,P_3)$ has type $\lbm 4 &3 &2 &1\\ 1 &5 &15 &35 \rem$. Thus, $\mcc^4(\tau_{1,2,3,4};P_0,\dotsc,P_3)_1$ is an intersection of 35 hyperplanes and $\mcc^4(\tau_{1,2,3,4};P_0,\dotsc,P_3)_1 \cong \PP_{L_1}^{769}$. Note that $\mcc^4(\tau_{1,2,3,4};P_0,\dotsc,P_3)_2 \cap \mcc^4(\tau_{1,2,3,4};P_0,\dotsc,P_3)_1$ has type $\lbm 2\\ 15 \rem$ in $\mcc^4(\tau_{1,2,3,4};P_0,\dotsc,P_3)_1$ and $2^9 = 512$ is the largest power of 2 less than $4 \cdot 3^5 = 972$. Consequently, we split $\mcc^4(\tau_{1,2,3,4};P_0,\dotsc,P_3)_2$ into two intersections of quadrics:
\begin{equation*}
	\mcc^4(\tau_{1,2,3,4};P_0,\dotsc,P_3)_2 = W_1 \cap W_2,
\end{equation*}

\noindent
where $W_1$ has type $\lbm 2\\ 6 \rem$ and $W_2$ has type $\lbm 2\\ 9 \rem$. We have that
\begin{equation*}
	769 = (109+1)(6)+109,
\end{equation*}

\noindent
and so by applying Proposition \ref{prop:kPolarPointsOnIntersectionsOfQuadrics} (with $k=109$ and $\ell=6$), we can determine a 109-plane $\Lambda_1 \subseteq W_1 \cap \mcc^4(\tau_{1,2,3,4};P_0,\dotsc,P_3)_1$ over an extension $L_2/L_1$ of degree at most 64. Similarly, $W_2 \cap \Lambda_1$ has type $\lbm 2\\ 9 \rem$ in $\Lambda_1$ and
\begin{equation*}
	109 = (10+1)(9)+10.
\end{equation*}

By applying Proposition \ref{prop:kPolarPointsOnIntersectionsOfQuadrics} (with $k=10$ and $\ell=9$), we determine an 8-plane $\Lambda_2 \subseteq W_2 \cap \Lambda_1$ over an extension $L_3/L_2$ of degree at most 512. It follows that $\mcc^4(\tau_{1,2,3,4};P_0,\dotsc,P_3) \cap \Lambda_2$ has type $\lbm 4 &3\\ 1 &5 \rem$ in $\Lambda_2$. Consequently,
\begin{equation*}
	\dim\lp \mcc^4(\tau_{1,2,3,4};P_0,\dotsc,P_3) \cap \Lambda_2 \rp \geq 10-6 \geq 4
\end{equation*}

\noindent
and we can determine a rational point $P_4$ of $\mcc^4(\tau_{1,2,3,4};P_0,\dotsc,P_3) \setminus L(P_0,P_1,P_2,P_3)$ over an extension $L_4/L_3$ of degree at most 972. Indeed, $(P_0,\dotsc,P_4)$ is a 4-polar point of $\tau_{1,2,3,4}$.

\paragraph{Case: $k=5$.}
Observe that $\rho(5)=1773$, hence we work in $\PP_{K_n}^{1771}$. From the $k=4$ case, we pass to an extension $L_1/K_n$ of resolvent degree at most $\RD(972)$ and assume that we have a 4-polar point $(P_0,\dotsc,P_4)$. The fifth polar cone $\mcc^5(\tau_{1,2,3,4};P_0,\dotsc,P_4)$ has type $\lbm 4 &3 &2 &1\\ 1 &6 &21 &56 \rem$. Hence, $\mcc^5(\tau_{1,2,3,4};P_0,\dotsc,P_4)_1$ is an intersection of 56 hyperplanes and $\mcc^5(\tau_{1,2,3,4};P_0,\dotsc,P_4)_1 \cong \PP_{L_1}^{1715}$. Note that $\mcc^5(\tau_{1,2,3,4};P_0,\dotsc,P_4)_2 \cap \mcc^5(\tau_{1,2,3,4};P_0,\dotsc,P_4)_1$ has type $\lbm 2\\ 21 \rem$ in $\mcc^5(\tau_{1,2,3,4};P_0,\dotsc,P_4)_1$. Observe that $2^{11} = 2048$ is the largest power of 2 less than $4 \cdot 3^6 = 2916$ and so we split $\mcc^5(\tau_{1,2,3,4};P_0,\dotsc,P_4)_2$ into two intersections of quadrics:
\begin{equation*}
	\mcc^5(\tau_{1,2,3,4};P_0,\dotsc,P_4)_2 = W_1 \cap W_2,
\end{equation*}

\noindent
where $W_1$ has type $\lbm 2\\ 10 \rem$ and $W_2$ has type $\lbm 2\\ 11 \rem$. Note that
\begin{equation*}
	1715 = (155+1)(10)+155,
\end{equation*}

\noindent
and, by applying Proposition \ref{prop:kPolarPointsOnIntersectionsOfQuadrics} (with $k=155$ and $\ell=10$), we can determine a 155-plane $\Lambda_1 \subseteq W_1 \cap \mcc^5(\tau_{1,2,3,4};P_0,\dotsc,P_4)_1$ over an extension $L_2/L_1$ of degree at most 1024. Also, $W_2 \cap \Lambda_1$ has type $\lbm 2\\ 11 \rem$ in $\Lambda_1$ and
\begin{equation*}
	155 = (12+1)(11)+12,
\end{equation*}

\noindent
so we can apply Proposition \ref{prop:kPolarPointsOnIntersectionsOfQuadrics} (with $k=12$ and $\ell=11$) to determine a 10-plane $\Lambda_2 \subseteq W_2 \cap \Lambda_1$ over an extension $L_3/L_2$ of degree at most 2048. It follows that $\mcc^5(\tau_{1,2,3,4};P_0,\dotsc,P_4) \cap \Lambda_2$ has type $\lbm 4 &3\\ 1 &6 \rem$ in $\Lambda_2$. Consequently,
\begin{equation*}
	\dim\lp \mcc^5(\tau_{1,2,3,4};P_0,\dotsc,P_4) \cap \Lambda_2 \rp \geq 12-7 \geq 5,
\end{equation*}

\noindent
and we can determine a rational point $P_5$ of $\mcc^5(\tau_{1,2,3,4};P_0,\dotsc,P_4) \setminus L(P_0,\dotsc,P_4)$ over an extension $L_4/L_3$ of degree at most 2916. By construction, $(P_0,\dotsc,P_5)$ is a 5-polar point of $\tau_{1,2,3,4}$.

\paragraph{Case: $k=6$.}
Recall that $\rho(6)=8905$ and thus we work in $\PP_{K_n}^{8903}$. Using the $k=5$ case, we pass to an extension $L_1/K_n$ of resolvent degree at most $\RD(2916)$ and assume that we have a 5-polar point $(P_0,\dotsc,P_5)$. The sixth polar cone $\mcc^6(\tau_{1,2,3,4};P_0,\dotsc,P_5)$ has type $\lbm 4 &3 &2 &1\\ 1 &7 &28 &84 \rem$. It follows that $\mcc^6(\tau_{1,2,3,4};P_0,\dotsc,P_5)_1$ is an intersection of 84 hyperplanes, so $\mcc^6(\tau_{1,2,3,4};P_0,\dotsc,P_5)_1 \cong \PP_{L_1}^{8819}$. Note that $\mcc^6(\tau_{1,2,3,4};P_0,\dotsc,P_5)_2 \cap \mcc^6(\tau_{1,2,3,4};P_0,\dotsc,P_5)_1$ has type $\lbm 2\\ 28 \rem$ in $\mcc^6(\tau_{1,2,3,4};P_0,\dotsc,P_5)_1$ and  $2^{13} = 8192$ is the largest power of 2 less than $4 \cdot 3^7 = 8748$. Consequently, we split $\mcc^6(\tau_{1,2,3,4};P_0,\dotsc,P_5)_2$ into three intersections of quadrics:
\begin{equation*}
	\mcc^6(\tau_{1,2,3,4};P_0,\dotsc,P_5)_2 = W_1 \cap W_2 \cap W_3,
\end{equation*}

\noindent
where $W_1$ has type $\lbm 2\\ 2 \rem$ and both $W_2,W_3$ have type $\lbm 2\\ 13 \rem$. We have that
\begin{equation*}
	8819 = (2939+1)(2)+2939,
\end{equation*}

\noindent
and so by applying Proposition \ref{prop:kPolarPointsOnIntersectionsOfQuadrics} (with $k=2939$ and $\ell=2$), we can determine a 2939-plane $\Lambda_1 \subseteq W_1 \cap \mcc^6(\tau_{1,2,3,4};P_0,\dotsc,P_5)_1$ over an extension $L_2/L_1$ of degree at most 4. Additionally, $W_2 \cap \Lambda_1$ has type $\lbm 2\\ 13 \rem$ inside $\Lambda_1$ and
\begin{equation*}
	2939 = (209+1)(13)+209.
\end{equation*}

We then apply Proposition \ref{prop:kPolarPointsOnIntersectionsOfQuadrics} (with $k=209$ and $\ell=13$) to determine a 209-plane $\Lambda_2 \subseteq W_2 \cap \Lambda_1$ over an extension $L_3/L_2$ of degree at most 8192. Similarly, $W_3 \cap \Lambda_2$ has type $\lbm 2\\ 13 \rem$ in $\Lambda_2$ and
\begin{equation*}
	209 = (14+1)(13)+14,
\end{equation*}

\noindent
and thus we apply Proposition \ref{prop:kPolarPointsOnIntersectionsOfQuadrics} (with $k=14$ and $\ell=13$) to determine a 14-plane $\Lambda_3 \subseteq W_3 \cap \Lambda_2$ over an extension $L_4/L_3$ of degree at most 8192. Observe that $\mcc^6(\tau_{1,2,3,4};P_0,\dotsc,P_5) \cap \Lambda_3$ has type $\lbm 4 &3\\ 1 &7 \rem$ inside $\Lambda_3$ and
\begin{equation*}
	\dim\lp \mcc^6(\tau_{1,2,3,4};P_0,\dotsc,P_5) \cap \Lambda_3 \rp \geq 14-8 \geq 6.
\end{equation*}

As a consequence, we can determine a rational point $P_6$ of $\mcc^6(\tau_{1,2,3,4};P_0,\dotsc,P_5) \setminus L(P_0,\dotsc,P_5)$ over an extension $L_5/L_4$ of degree at most 8748. It follows that $(P_0,\dotsc,P_6)$ is a 6-polar point of $\tau_{1,2,3,4}$.

\paragraph{Case: $k=7$.}
Note that $\rho(7)=34546$, so we work in $\PP_{K_n}^{34544}$. By the $k=6$ case, we pass to an extension $L_1/K_n$ of resolvent degree at most $\RD(8748)$ and assume that we have a 6-polar point $(P_0,\dotsc,P_6)$. Observe that the seventh polar cone $\mcc^7(\tau_{1,2,3,4};P_0,\dotsc,P_6)$ has type $\lbm 4 &3 &2 &1\\ 1 &8 &36 &120 \rem$ and so $\mcc^7(\tau_{1,2,3,4};P_0,\dotsc,P_6)_1$ is an intersection of 120 hyperplanes. Thus, $\mcc^7(\tau_{1,2,3,4};P_0,\dotsc,P_6)_1 \cong \PP_{L_1}^{34424}$. Inside $\mcc^7(\tau_{1,2,3,4};P_0,\dotsc,P_6)_1$, $\mcc^7(\tau_{1,2,3,4};P_0,\dotsc,P_6)_2 \cap \mcc^7(\tau_{1,2,3,4};P_0,\dotsc,P_6)_1$ has type $\lbm 2\\ 36 \rem$. Note that $2^{14} = 16384$ is the largest power of 2 less than $4 \cdot 3^8 = 26244$. Consequently, we split $\mcc^7(\tau_{1,2,3,4};P_0,\dotsc,P_6)_2$ into three intersections of quadrics:
\begin{equation*}
	\mcc^7(\tau_{1,2,3,4};P_0,\dotsc,P_6)_2 = W_1 \cap W_2 \cap W_3,
\end{equation*}

\noindent
where $W_1$ has type $\lbm 2\\ 8 \rem$ and both $W_2,W_3$ have type $\lbm 2\\ 14 \rem$. We have that
\begin{equation*}
	34424 = (3824+1)(8)+3824,
\end{equation*}

\noindent
and apply Proposition \ref{prop:kPolarPointsOnIntersectionsOfQuadrics} (with $k=3824$ and $\ell=8$) to determine a 3824-plane $\Lambda_1 \subseteq W_1 \cap \mcc^7(\tau_{1,2,3,4};P_0,\dotsc,P_6)_1$ over an extension $L_2/L_1$ of degree at most 256. Additionally, $W_2 \cap \Lambda_1$ has type $\lbm 2\\ 14 \rem$ inside $\Lambda_1$ and
\begin{equation*}
	3824 = (254+1)(14)+254.
\end{equation*}

Consequently, we can apply Proposition \ref{prop:kPolarPointsOnIntersectionsOfQuadrics} (with $k=254$ and $\ell=14$) to determine a 254-plane $\Lambda_2 \subseteq W_2 \cap \Lambda_1$ over an extension $L_3/L_2$ of degree at most 16384. Observe that $W_3 \cap \Lambda_2$ has type $\lbm 2\\ 14 \rem$ in $\Lambda_2$ and
\begin{equation*}
	254 = (16+1)(14)+16,
\end{equation*}

\noindent
and so, by Proposition \ref{prop:kPolarPointsOnIntersectionsOfQuadrics} (with $k=16$ and $\ell=14$) to determine a 16-plane $\Lambda_3 \subseteq W_3 \cap \Lambda_2$ over an extension $L_4/L_3$ of degree at most 16384. Observe that $\mcc^7(\tau_{1,2,3,4};P_0,\dotsc,P_6) \cap \Lambda_3$ has type $\lbm 4 &3\\ 1 &8 \rem$ inside $\Lambda_3$ and
\begin{equation*}
	\dim\lp \mcc^7(\tau_{1,2,3,4};P_0,\dotsc,P_6) \cap \Lambda_3 \rp \geq 16-9 \geq 7.
\end{equation*}

It follows that we can determine a rational point $P_7$ of $\mcc^7(\tau_{1,2,3,4};P_0,\dotsc,P_6) \setminus L(P_0,\dotsc,P_6)$ over an extension $L_5/L_4$ of degree at most 26244. Note that $(P_0,\dotsc,P_7)$ is a 7-polar point of $\tau_{1,2,3,4}$.

\paragraph{Case: $k=8$.}
Recall that $\rho(8)=77040$ and thus we work in $\PP_{K_n}^{77038}$. By the $k=7$ case, we pass to an extension $L_1/K_n$ of resolvent degree at most $\RD(26244)$ and assume that we have a 7-polar point $(P_0,\dotsc,P_7)$. Observe that the eighth polar cone $\mcc^8(\tau_{1,2,3,4};P_0,\dotsc,P_7)$ has type $\lbm 4 &3 &2 &1\\ 1 &9 &45 &165 \rem$ and so $\mcc^8(\tau_{1,2,3,4};P_0,\dotsc,P_7)_1$ is an intersection of 165 hyperplanes. Thus, $\mcc^8(\tau_{1,2,3,4};P_0,\dotsc,P_7)_1 \cong \PP_{L_1}^{76873}$ and $\mcc^8(\tau_{1,2,3,4};P_0,\dotsc,P_7)_2 \cap \mcc^8(\tau_{1,2,3,4};P_0,\dotsc,P_7)_1$ has type $\lbm 2\\ 45 \rem$ inside $\mcc^8(\tau_{1,2,3,4};P_0,\dotsc,P_7)_1$. We have that $2^{16} = 65536$ is the largest power of 2 less than $4 \cdot 3^9 = 78732$. Consequently, we split $\mcc^8(\tau_{1,2,3,4};P_0,\dotsc,P_7)_2$ into three intersections of quadrics:
\begin{equation*}
	\mcc^8(\tau_{1,2,3,4};P_0,\dotsc,P_7)_2 = W_1 \cap W_2 \cap W_3,
\end{equation*}

\noindent
where $W_1$ has type $\lbm 2\\ 13 \rem$ and both $W_2,W_3$ have type $\lbm 2\\ 16 \rem$. Observe that
\begin{equation*}
	76873 = (5490+1)(13)+5490.
\end{equation*}

\noindent
Thus, we can apply Proposition \ref{prop:kPolarPointsOnIntersectionsOfQuadrics} (with $k=5490$ and $\ell=13$) to determine a 5490-plane $\Lambda_1 \subseteq W_1 \cap \mcc^8(\tau_{1,2,3,4};P_0,\dotsc,P_7)_1$ over an extension $L_2/L_1$ of degree at most 8192. We also have that $W_2 \cap \Lambda_1$ has type $\lbm 2\\ 16 \rem$ inside $\Lambda_1$ and
\begin{equation*}
	5490 = (322+1)(16)+322.
\end{equation*}

As a result, we apply Proposition \ref{prop:kPolarPointsOnIntersectionsOfQuadrics} (with $k=322$ and $\ell=16$) to determine a 322-plane $\Lambda_2 \subseteq W_2 \cap \Lambda_1$ over an extension $L_3/L_2$ of degree at most 65536. Note that $W_3 \cap \Lambda_2$ has type $\lbm 2\\ 16 \rem$ in $\Lambda_2$ and
\begin{equation*}
	322 = (18+1)(16)+18,
\end{equation*}

\noindent
and so Proposition \ref{prop:kPolarPointsOnIntersectionsOfQuadrics} (with $k=18$ and $\ell=16$) allows us to determine an 18-plane $\Lambda_3 \subseteq W_3 \cap \Lambda_2$ over an extension $L_4/L_3$ of degree at most 65536. Observe that $\mcc^8(\tau_{1,2,3,4};P_0,\dotsc,P_7) \cap \Lambda_3$ has type $\lbm 4 &3\\ 1 &9 \rem$ inside $\Lambda_3$ and
\begin{equation*}
	\dim\lp \mcc^8(\tau_{1,2,3,4};P_0,\dotsc,P_7) \cap \Lambda_3 \rp \geq 18-10 \geq 8.
\end{equation*}

Consequently, we can determine a rational point $P_8$ of $\mcc^8(\tau_{1,2,3,4};P_0,\dotsc,P_7) \setminus L(P_0,\dotsc,P_7)$ over an extension $L_5/L_4$ of degree at most 78732. By design, $(P_0,\dotsc,P_8)$ is an 8-polar point of $\tau_{1,2,3,4}$.

\paragraph{Case: $k=9$.}
Recall that $\rho(9)=612581$ and thus we work in $\PP_{K_n}^{612579}$. By the $k=8$ case, we pass to an extension $L_1/K_n$ of resolvent degree at most $\RD(78732)$ and assume that we have a 8-polar point $(P_0,\dotsc,P_8)$. Note that the ninth polar cone $\mcc^9(\tau_{1,2,3,4};P_0,\dotsc,P_8)$ has type $\lbm 4 &3 &2 &1\\ 1 &10 &55 &220 \rem$. So, $\mcc^9(\tau_{1,2,3,4};P_0,\dotsc,P_8))_1$ is an intersection of 220 hyperplanes and $\mcc^9(\tau_{1,2,3,4};P_0,\dotsc,P_8)_1 \cong \PP_{L_1}^{612359}$.  Observe that $\mcc^9(\tau_{1,2,3,4};P_0,\dotsc,P_8)_2 \cap \mcc^9(\tau_{1,2,3,4};P_0,\dotsc,P_8)_1$ has type $\lbm 2\\ 55 \rem$ inside $\mcc^9(\tau_{1,2,3,4};P_0,\dotsc,P_8)_1$ and $2^{17} = 131072$ is the largest power of 2 less than $4 \cdot 3^{10} = 236196$. Correspondingly, we split $\mcc^9(\tau_{1,2,3,4};P_0,\dotsc,P_8)_2$ into four intersections of quadrics:
\begin{equation*}
	\mcc^9(\tau_{1,2,3,4};P_0,\dotsc,P_8)_2 = W_1 \cap W_2 \cap W_3 \cap W_4,
\end{equation*}

\noindent
where $W_1$ has type $\lbm 2\\ 4 \rem$ and all of $W_2,W_3,W_4$ have type $\lbm 2\\ 17 \rem$. Note that
\begin{equation*}
	612359 = (122471+1)(4)+122471,
\end{equation*}

\noindent
and so, by Proposition \ref{prop:kPolarPointsOnIntersectionsOfQuadrics} (with $k=122471$ and $\ell=4$), we can determine a 122471-plane $\Lambda_1 \subseteq W_1 \cap \mcc^9(\tau_{1,2,3,4};P_0,\dotsc,P_8)_1$ over an extension $L_2/L_1$ of degree at most 16. Additionally, $W_2 \cap \Lambda_1$ has type $\lbm 2\\ 17 \rem$ inside $\Lambda_1$ and
\begin{equation*}
	122471 = (6803+1)(17)+6805.
\end{equation*}

As a result, we apply Proposition \ref{prop:kPolarPointsOnIntersectionsOfQuadrics} (with $k=6805$ and $\ell=17$) to determine a 6805-plane $\Lambda_2 \subseteq W_2 \cap \Lambda_1$ over an extension $L_3/L_2$ of degree at most 131072. Note that $W_3 \cap \Lambda_2$ has type $\lbm 2\\ 17 \rem$ in $\Lambda_2$ and
\begin{equation*}
	6803 = (377+1)(17)+377.
\end{equation*}

\noindent
and so Proposition \ref{prop:kPolarPointsOnIntersectionsOfQuadrics} (with $k=18$ and $\ell=16$) allows us to determine an 18-plane $\Lambda_3 \subseteq W_3 \cap \Lambda_2$ over an extension $L_4/L_3$ of degree at most 65536. Also, $W_4 \cap \Lambda_3$ has type $\lbm 2\\ 17 \rem$ in $\Lambda_3$ and
\begin{equation*}
	377 = (20+1)(17)+20.
\end{equation*}

From Proposition \ref{prop:kPolarPointsOnIntersectionsOfQuadrics} (with $k=18$ and $\ell=16$), we can determine a 20-plane $\Lambda_4 \subseteq W_4 \cap \Lambda_3$ over an extension $L_5/L_4$ of degree at most 131072. We have that $\mcc^9(\tau_{1,2,3,4};P_0,\dotsc,P_8) \cap \Lambda_4$ has type $\lbm 4 &3\\ 1 &10 \rem$ inside $\Lambda_4$ and
\begin{equation*}
	\dim\lp \mcc^9(\tau_{1,2,3,4};P_0,\dotsc,P_8) \cap \Lambda_4 \rp \geq 20-11 \geq 9.
\end{equation*}

As a result, we can determine a rational point $P_9$ of $\mcc^9(\tau_{1,2,3,4};P_0,\dotsc,P_8) \setminus L(P_0,\dotsc,P_8)$ over an extension $L_6/L_5$ of degree at most 236196 and it follows that $(P_0,\dotsc,P_9)$ is a 9-polar point of $\tau_{1,2,3,4}$.

\end{proof}

%%%%%%%%%%%%%%%%%%%%%%%%%%%%%%%%%%%%%%%%%%%%%%%%%%%%%%
%%%%%%%%%%%%%%%%%%%%%%%%%%%%%%%%%%%%%%%%%%%%%%%%%%%%%%
%%%%%%%%%%%%%%%%%%%%%%%%%%%%%%%%%%%%%%%%%%%%%%%%%%%%%%
%%%%%%%%%%%%%%%%%%%%%%%%%%%%%%%%%%%%%%%%%%%%%%%%%%%%%%
%%%%%%%%%%%%%%%%%%%%%%%%%%%%%%%%%%%%%%%%%%%%%%%%%%%%%%
%%%%%%%%%%%%%%%%%%%%%%%%%%%%%%%%%%%%%%%%%%%%%%%%%%%%%%
%%%%%%%%%%%%%%%%%%%%%%%%%%%%%%%%%%%%%%%%%%%%%%%%%%%%%%

\subsection{New Bounds from Moduli Spaces}\label{subsec:NewBoundsFromModuliSpaces}

In \cite{Wolfson2021}, Wolfson uses moduli space methods to determine $k$-planes on intersections of hypersurfaces over extensions of scalars of bounded resolvent degree. Wolfson then applies these methods to obtain upper bounds on $\RD(n)$. We will use a similar construction (Theorem \ref{thm:Determining A Point On Tau(d+k)}) and thus begin by giving an example of Wolfson's process in Example \ref{ex:WolfsonsMethod}. To do so, we must introduce additional language and notation.

\begin{definition}\label{def:ParameterAndModuliSpacesOfHypersurfaces} \textbf{(Parameter and Moduli Spaces of Hypersurfaces)}\\
	Fix $d \geq 2$. The \textbf{parameter space of degree $d$ hypersurfaces} in $\PP_\CC^r$ is 
\begin{align*}
	\mch(d;r) \cong \PP_\CC^{\binom{r+d}{d}-1}
\end{align*}

\noindent
However, there is a natural action of $\PGL(\CC,r+1)$ on $\PP_\CC^r$ which identifies hypersurfaces which are projectively equivalent. The \textbf{parameter space of semi-stable, degree $d$ hypersurfaces} in $\PP_\CC^r$ is the largest proper Zariski open $U \subseteq \mch(d;r)$ which is $\PGL(\CC,r+1)$-invariant; we denote it by $\mcs(d;r)$. Consequently, the \textbf{coarse moduli space of semi-stable, degree $d$ hypersurfaces} in $\PP_\CC^r$ is thus
	\begin{align*}
		\mcm(d;r) := \mcs(d;r)/ \PGL(\CC,r+1).
	\end{align*}
	
	The semi-stable locus $\mcs(d;r)$ is a dense Zariski open of $\mch(d;r)$ which is $\PGL(\CC,r+1)$-invariant. It contains another dense, $\text{PGL}(\CC,r+1)$-invariant Zariski open $\mcs^\circ(d;r) \subseteq \mcs(d;r)$ which parametrizes the smooth hypersurfaces. The \textbf{coarse moduli space of smooth, degree $d$ hypersurfaces} in $\PP_\CC^r$ is
	\begin{align*}
		\mcm^\circ(d;r) := \mcs^\circ(d;r) / \text{ PGL}(\CC,r+1).
	\end{align*}
\end{definition}

Each of the above parameter and moduli spaces classifies certain objects. We will additionally need to refer to the spaces classifying these objects with an associated choice of $k$-plane.

\begin{definition}\label{def:ParameterAndModuliSpacesOfHypersurfacesWithkPlanes} \textbf{(Parameter and Moduli Spaces of Hypersurfaces with $k$-Planes)}\\
	Continuing with the notation of Definition \ref{def:ParameterAndModuliSpacesOfHypersurfaces} and recalling that $\GR(k,r)$ is the variety of $k$-planes in $\PP_\CC^r$, we denote the \textbf{parameter space of degree $d$ hypersurfaces with choice of $k$-plane} in $\PP_\CC^r$ by $\mch(d;r,k)$; it is the incidence variety
	\begin{align*}
		\mch(d;r,k) = \lb (V,L) \st L \subseteq V \rb \subseteq \mch(d;r) \times \GR(k,r).
	\end{align*}
	
	\noindent	
	Similarly, we write $\mcs(d;r,k), \ \mcm(d;r,k), \text{ and } \mcm^\circ(d;r,k)$ for the analogous spaces which additionally classify a choice of $k$-plane. They similarly arise as incidence varieties or as quotients of incidence varieties by $\PGL(\CC,r+1)$.
\end{definition}

We now expand these definitions from classifying hypersurfaces to intersections of hypersurfaces of type $(2,\dotsc,d)$. 

\begin{definition}\label{def:Parameter And Moduli Spaces Of Intersections Of Hypersurfaces} \textbf{(Parameter and Moduli Spaces of Intersections of Hypersurfaces)}\\
	The \textbf{parameter space of intersections of hypersurfaces of type $(2,\dotsc,d)$} in $\PP_\CC^r$ is
	\begin{align*}
		\mch(2,\dotsc,d;r) = \mch(2;r) \times \cdots \times \mch(d;r).
	\end{align*}
	
	The natural action of $\PGL(\CC,r+1)$ action on $\PP_\CC^r$ induces a diagonal action on $\mch(2,\dotsc,d;r)$ and the \textbf{parameter space of semi-stable intersections of hypersurfaces of type $(2,\dotsc,d)$} in $\PP_\CC^r$ is the largest proper Zariski open $U \subseteq \mch(2,\dotsc,d;r)$ which is $\text{PGL}(\CC,r+1)$-invariant; it is denoted by $\mcs(2,\dotsc,d;r)$.
	
The \textbf{moduli space of semi-stable intersections of hypersurfaces of type $(2,\dotsc,d)$} in $\PP_\CC^r$ is
	\begin{align*}
		\mcm(2,\dotsc,d;r) = \mcs(2,\dotsc,d;r) / \text{ PGL}(\CC,r+1).
	\end{align*}
	
	In analogy with Definition \ref{def:ParameterAndModuliSpacesOfHypersurfacesWithkPlanes}, we write $\mch(2,\dotsc,d;r,k), \ \mcs(2,\dotsc,d;r,k),$ and $\mcm(2,\dotsc,d;r,k)$ for the respective spaces which additionally classify a choice of $k$-plane; they analogously arise as incidence varieties or as quotients of incidence varieties by $\PGL(\CC,r+1)$, as well.
\end{definition}

\begin{remark}\label{rem:Parameter and Moduli Spaces as Schemes} \textbf{(Parameter and Moduli Spaces as Schemes)}\\
For the interested reader, we note that these spaces can be constructed as schemes via classical invariant theory, beginning with
\begin{align*}
		\mch(d;r) &= \proj\lp S^*\lp \CC[x_0,\dotsc,x_r]_{(d)}^\vee \rp \rp,\\
		\mcm(d;r) &= \proj\lp S^*\lp \CC[x_0,\dotsc,x_r]_{(d)}^\vee \rp^{\GL(\CC,r+1)} \rp.
	\end{align*}		
	
	\noindent
	The remaining spaces arise analogously from the same constructions as in the variety case.
\end{remark}

We will use the dimension of these parameter and moduli spaces, so we recall the dimensions in the case of hypersurfaces and give the dimensions in the case of intersections of hypersurfaces of type $(2,\dotsc,d)$. In the proof of Proposition \ref{prop:Type Of a kth Polar Cone of a type (1,...,d)}, we observed the combinatorial identity
	\begin{equation*}
		\SL_{i=0}^d \binom{r+i}{i} = \binom{r+d+1}{d}.
	\end{equation*}
	
\noindent
In Remark \ref{rem:DimensionOfParameterAndModuliSpaces} and Definition \ref{def:NotationForkPlanesLemma}, we will use the slight variation
\begin{equation}\label{eqn:Combinatorial Identity}
	\SL_{i=2}^d \binom{r+i}{i} = \binom{r+d+1}{d} - (r+2).
\end{equation}

\begin{remark}\label{rem:DimensionOfParameterAndModuliSpaces} \textbf{(Dimension of Parameter and Moduli Spaces)}\\
	For fixed $d,r \geq 1$, we have
	\begin{equation*}
		\dim\lp \mcs(d;r) \rp = \dim\lp \mch(d;r) \rp = \binom{r+d}{d} - 1.
	\end{equation*}
	
	\noindent
	When $\binom{r+d}{d}-(r+1)^2<0$, $\mcm(d;r)$ is empty. When $\binom{r+d}{d}-(r+1)^2 \geq 0$, we have
	\begin{equation*}
	\dim\lp \mcm^\circ(d;r) \rp = \dim\lp \mcm(d;r) \rp = \binom{r+d}{d} - (r+1)^2.
	\end{equation*}
	
	\noindent
	Similarly, for $d,r$ for which the following spaces are non-empty, we have
	\begin{align*}
	&\dim\lp \mcs\lp 2,\dotsc,d;r \rp \rp = \dim\lp \mch\lp 2,\dotsc,d;r \rp \rp = \lp \SL_{i=2}^d \binom{r+i}{i} \rp - (d-1),\\
	&\dim\lp \mcm\lp 2,\dotsc,d;r \rp \rp = \lp \SL_{i=2}^d \binom{r+i}{i} \rp - (r+1)^2 - (d-2).
	\end{align*}
	
	\noindent
	From equation (\ref{eqn:Combinatorial Identity}), we re-write these quantities as
	\begin{align*}
		&\dim\lp \mcs\lp 2,\dotsc,d;r \rp \rp = \dim\lp \mch\lp 2,\dotsc,d;r \rp \rp = \binom{r+d+1}{d} - (r+d+1),\\
		&\dim\lp \mcm\lp 2,\dotsc,d;r \rp \rp = \binom{r+d+1}{d} - (r+1)^2 - (r+d).
	\end{align*}
\end{remark}

We now examine how Wolfson's algorithm determines $k$-planes on hypersurfaces. A theorem of Waldron (Theorem \ref{thm:Waldron}) guarantees the existence of a $k$-plane on a degree $d$ hypersurface once the ambient dimension $r$ is above a given threshold:

\begin{theorem}\label{thm:Waldron} \textbf{(Theorem 1.6 of \cite{Waldron2008})}\\
	Fix $d \geq 3$. When $r$ and $k$ are such that
	\begin{equation*}
		(k+1)(r-k) - \binom{k+d}{d} \geq 0,
	\end{equation*}
	
	\noindent
	then the natural maps
	\begin{align*}
		\mch(d;r,k) &\ra \mch(d;r),\\
		\mcm^\circ(d;r,k) &\ra \mcm^\circ(d;r),
	\end{align*}
	
	\noindent
	are surjective.\footnote{This is not the entirety of Waldron's result; see Theorem 1.6 of \cite{Waldron2008} for more details.}
\end{theorem}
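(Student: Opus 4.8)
The plan is to run the standard incidence-correspondence argument on $\mch(d;r,k)$. The second projection $\mch(d;r,k)\to\GR(k,r)$ exhibits $\mch(d;r,k)$ as a projective bundle: for a fixed $k$-plane $\Lambda$ the restriction map $H^0(\PP_\CC^r,\mathcal{O}(d))\to H^0(\Lambda,\mathcal{O}(d))$ is surjective with kernel of codimension $\binom{k+d}{d}$, so the hypersurfaces containing $\Lambda$ form a linear subspace of $\mch(d;r)$ of codimension $\binom{k+d}{d}$. Hence $\mch(d;r,k)$ is irreducible and projective of dimension $\binom{r+d}{d}-1+(k+1)(r-k)-\binom{k+d}{d}$, so the first projection $\pi\colon\mch(d;r,k)\to\mch(d;r)$ has closed irreducible image. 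By upper semicontinuity of fiber dimension together with the theorem on dimensions of fibers of a morphism, $\pi$ is surjective provided there is one pair $(V_0,\Lambda_0)\in\mch(d;r,k)$ at which the local fiber dimension is at most the expected value $(k+1)(r-k)-\binom{k+d}{d}$, i.e.\ at which the Fano scheme of $k$-planes in $V_0$ has dimension at most the expected one at the point $[\Lambda_0]$.

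Next I would translate this into cohomology. When $V_0$ is smooth along $\Lambda_0$, the normal bundle sequence $0\to N_{\Lambda_0/V_0}\to N_{\Lambda_0/\PP_\CC^r}\to N_{V_0/\PP_\CC^r}|_{\Lambda_0}\to 0$ reads $0\to N_{\Lambda_0/V_0}\to\mathcal{O}_{\PP^k}(1)^{\oplus(r-k)}\to\mathcal{O}_{\PP^k}(d)\to 0$ with middle arrow $\phi$; since $\mathcal{O}_{\PP^k}(1)$ and $\mathcal{O}_{\PP^k}(d)$ have no higher cohomology, $H^{\ge 2}(N_{\Lambda_0/V_0})=0$ and $H^1(N_{\Lambda_0/V_0})=\coker\bigl(H^0(\phi)\bigr)$, so $h^0(N_{\Lambda_0/V_0})$ attains the expected value precisely when $H^0(\phi)$ is surjective. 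Choosing coordinates so that $\Lambda_0=\VV(x_{k+1},\dotsc,x_r)$ and taking $V_0=\VV(f_0)$ with $f_0=\sum_{i=k+1}^{r}x_i g_i$ for forms $g_i\in\CC[x_0,\dotsc,x_k]_{(d-1)}$, one checks that $H^0(\phi)$ is the multiplication map $(\ell_{k+1},\dotsc,\ell_r)\mapsto\sum_i\ell_i g_i$, and that surjectivity of this map forces the $g_i$ to have no common zero, so $V_0$ is then automatically smooth along $\Lambda_0$; thus such a pair $(V_0,\Lambda_0)$ witnesses surjectivity of $\pi$.

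The theorem therefore reduces to the purely algebraic statement: when $(r-k)(k+1)\ge\binom{k+d}{d}$, there exist $r-k$ forms $g_{k+1},\dotsc,g_r$ of degree $d-1$ in $k+1$ variables whose products with linear forms span $\CC[x_0,\dotsc,x_k]_{(d)}$. This is the crux of the matter, and I would obtain it from the Hochster--Laksov theorem on generic forms in the first degree above that of the generators (a proven case of Fr\"oberg's conjecture): for a generic tuple $(g_{k+1},\dotsc,g_r)$ the multiplication map $\CC[x_0,\dotsc,x_k]_{(1)}^{\oplus(r-k)}\to\CC[x_0,\dotsc,x_k]_{(d)}$ has maximal rank, hence is surjective under the stated inequality (for $k=1$ an explicit monomial choice also works). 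Since this produces, for each $(r,k)$ satisfying the inequality, a $k$-plane on some --- hence on every --- degree $d$ hypersurface, in particular every smooth one contains a $k$-plane, so restricting $\pi$ over the smooth locus $\mcs^\circ(d;r)$ stays surjective; dividing through by the compatible $\PGL(\CC,r+1)$-action then yields surjectivity of $\mcm^\circ(d;r,k)\to\mcm^\circ(d;r)$. The main obstacle is exactly this Hochster--Laksov input; the rest is formal, which is why it is cleanest to invoke Waldron's Theorem 1.6 directly rather than reproduce the argument here.
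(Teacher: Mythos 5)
The paper never proves this statement: it is imported verbatim as a black box from Waldron's thesis (the footnote even stresses that only part of Waldron's Theorem 1.6 is being quoted), so there is no internal argument to compare yours against. Taken on its own terms, your sketch is a correct proof along the classical lines. The projective-bundle description of $\mch(d;r,k)$ over $\GR(k,r)$ and the resulting dimension count are right; the reduction of surjectivity of $\pi\colon\mch(d;r,k)\ra\mch(d;r)$ to exhibiting a single pair $(V_0,\Lambda_0)$ whose fiber has at most the expected local dimension is the standard fiber-dimension argument (one implicit step worth stating: you bound that local dimension by the Zariski tangent space of the Fano scheme at $[\Lambda_0]$, which is $h^0(N_{\Lambda_0/V_0})$ once $V_0$ is smooth along $\Lambda_0$); the identification of $H^0(\phi)$ with the multiplication map $(\ell_{k+1},\dotsc,\ell_r)\mapsto\sum_i \ell_i g_i$ for $f_0=\sum_i x_ig_i$ is correct, and your observation that surjectivity of this map forces the $g_i$ to have no common zero on $\Lambda_0$ removes any circularity in assuming smoothness along $\Lambda_0$. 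The genuinely non-formal input is exactly where you place it: the Hochster--Laksov maximal-rank theorem for generic forms of degree $d-1\geq 2$, and this is also precisely where the hypothesis $d\geq 3$ enters (for $d=2$ the Koszul syzygies among linear forms defeat maximal rank, consistent with the exclusion of quadrics, for which the statement is false). The passage to $\mcm^\circ(d;r,k)\ra\mcm^\circ(d;r)$ is then a diagram chase on $\CC$-points using $\PGL(\CC,r+1)$-equivariance, as you say. In effect you have re-proved the hypersurface case of the circle of results the paper prefers to cite (Waldron, and later the more general Debarre--Manivel Theorem \ref{thm:DebarreManivel} for intersections of hypersurfaces); the citation buys brevity, while your route makes visible that the only real content is the generic maximal-rank statement.
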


Definition 2.4 of \cite{Wolfson2021} shows that the extended Tschirnhaus complete intersections are defined over $\ZZ$. Thus, for suitable $d,r,k$, Waldron's theorem allows us to determine a $k$-plane on $\tau_d$ over an extension $L/K_n$ with
\begin{equation*}
	\RD(L/K_n) \leq \RD\lp \mch(d;r,k) \ra \mch(d;r) \rp \leq \dim\lp \mch(d;r) \rp.
\end{equation*}

Moreover, Theorem 2.12  of \cite{Wolfson2021} shows that $\tau_{1,2,3}$ (along with certain other extended Tschirnhaus complete intersections) are generically smooth. It follows that for suitable $r,k$, Waldron's theorem allows us to determine a $k$-plane on $\tau_3$ over an extension $L'/K_n$ with
\begin{equation*}
	\RD(L/K_n) \leq \RD\lp \mcm^\circ(3;r,k) \ra \mcm^\circ(3;r) \rp \leq \dim\lp \mcm^\circ(3;r) \rp.
\end{equation*} 

\noindent
Note that for a generically finite, dominant, rational map of $\CC$-varieties $Y \dashrightarrow X$, the upper bound
\begin{equation}\label{eqn:FW Lemma 2.5}
	\RD(Y \dashrightarrow X) \leq \dim(X),
\end{equation}

\noindent
follows directly from Definition \ref{def:ResolventDegreeBranchedCovers}; it is also included in Lemma 2.5 of \cite{FarbWolfson2019}.

\begin{example}\label{ex:WolfsonsMethod} \textbf{(Wolfson's Method)}\\
For $n \geq 1559$, we can determine an $8$-plane on $\tau_{1,2,3,4}$ over an extension $L/K_n$ with $\RD(L/K_n) \leq 78485029$.
\end{example}

\begin{proof}
First, $\tau_1$ is a hyperplane and so $\tau_1 \cong \PP_{K_n}^{1557}$. Next, $\tau_2$ is a quadric hypersurface in $\tau_1$ and thus it is known classically that there is a 778-plane $\Lambda_2 \subseteq \tau_{1,2}$ over an iterated quadratic extension $L_1/K_n$ (see \cite{Wolfson2021} for details).  $\tau_{1,2,3} \cap \Lambda_2$ is a cubic hypersurface in $\Lambda_2$ and
	\begin{equation*}
		(63+1)(778-63) - \binom{63+3}{3} = 0,
	\end{equation*}
	
	\noindent
	hence we can determine a 63-plane $\Lambda_3 \subseteq \tau_{1,2,3} \cap \Lambda_2$ over an extension $L_2/L_1$ with
	\begin{equation*}
		\RD(L_2/L_1) \leq \RD\lp \mcm^\circ(3;778,63) \ra \mcm^\circ(3;778) \rp
		\leq \dim\lp \mcm^\circ(3;778) \rp
		= 78485029.
	\end{equation*}
	
	\noindent
	 Finally, $\tau_{1,2,3,4} \cap \Lambda_3$ is a quartic hypersurface in $\Lambda_3$ and
	\begin{equation*}
		(8+1)(63-8) - \binom{8+4}{4} = 0,
	\end{equation*}
	
	\noindent
	hence we can determine an 8-plane $\Lambda_4 \subseteq \tau_{1,2,3,4} \cap \Lambda_3$ over an extension $L_3/L_2$ with
	\begin{equation*}
		\RD(L_3/L_2) \leq \RD\lp \mch(4;63,8) \ra \mch(4;63) \rp
		\leq \dim\lp \mch(4;63) \rp
		= 766479.
	\end{equation*}
\end{proof}

We next provide an example which highlights the current limitations of using iterated polar cones for determining additional upper bounds on $\RD(n)$. 

\begin{example}\label{ex:Wolfson Computation For tau(1,2,3,4,5)} \textbf{(Wolfson Method for a 9-Plane on $\tau_{1,2,3,4,5}$)}\\
We can determine a 9-plane on $\tau_{1,2,3,4,5}$ over an extension $L/K_n$ with
\begin{equation*}
	\RD(L/K_n) \leq 3298353885918738132194252727911 \lp \ \approx 3 \cdot 10^{30} \ \rp,
\end{equation*}

\noindent
as long as
\begin{align*}
	r \geq 54097786526 \lp \ \approx 5 \cdot 10^{10} \rp.
\end{align*}				
\end{example}

	The core construction is the same as Example \ref{ex:WolfsonsMethod}. In Wolfson's notation, observe that $54097786526 = \psi(5,9)_4 + 1$ and
\begin{equation*}
	\dim\lp \mcm^\circ(3;\psi(5,9)_3) \rp = 3298353885918738132194252727911 \lp \ \approx 3 \cdot 10^{30} \ \rp.
\end{equation*}
		
\noindent
See the proof of Theorem 5.6 of \cite{Wolfson2021} for details.

\begin{remark} \textbf{(Iterated Polar Cone Comparison)}\\
	Observe that a $9^{th}$ polar cone $\mcc^9(\tau_{1,2,3,4,5};P_0,\dotsc,P_8)$ of $\tau_{1,2,3,4,5}$ has type $\lbm 5 &4 &3 &2 &1\\ 1 &10 &55 &220 &715 \rem$. Thus, even for $n$ large enough so that we may determine a suitable 66-plane $\Lambda$ on $\mcc^9(\tau_{1,2,3,4,5};P_0,\dotsc,P_8)_1 \cap \mcc^9(\tau_{1,2,3,4,5};P_0,\dotsc,P_8)_2$ (the intersection of the 220 quadrics and 715 hyperplanes defining $\mcc^9(\tau_{1,2,3,4,5};P_0,\dotsc,P_8)$), it still follows that
	\begin{align*}
		\deg\lp \mcc^9(\tau_{1,2,3,4,5},P_8) \cap \Lambda \rp &= 5 \cdot 4^{10} \cdot 3^{55},\\
		&= 914616279415496004448658427740160 &\lp \ \approx 9 \cdot 10^{32} \ \rp,\\
		&> 3298353885918738132194252727911 &\lp \ \approx 3 \cdot 10^{30} \ \rp,\\
		&= \dim\lp \mcm^\circ(3;\psi(5,9)_3) \rp.
	\end{align*}
	
	Indeed, the degree of the analogous intersection grows exponentially in $k$ whereas the required resolvent degree for Wolfson's method grows polynomially in $k$. Similar issues arise for $k$-planes on intersections of hypersurfaces of type (1,2,3,4) for $k \geq 33$. 
\end{remark}

In \cite{DebarreManivel1998}, Debarre and Manivel give an explicit combinatorial condition for an intersection of hypersurfaces in $\PP_\CC^r$ to contain a $k$-plane:

\begin{theorem}\label{thm:DebarreManivel} \textbf{(Theorem 2.1 of \cite{DebarreManivel1998})}\\
	Let $V \subseteq \PP_\CC^r$ be an intersection of hypersurfaces of type $\lbm d &d-1 &\cdots &2 &1\\ \ell_d &\ell_{d-1} &\cdots &\ell_2 &\ell_1 \rem$ which is not a quadric hypersurface. When $r$ and $k$ are such that
	\begin{align*}
		(k+1)(r-k) - \SL_{i=1}^d \ell_i \binom{k+i}{i} \geq 0,
	\end{align*}
	
	\noindent
	then the natural maps
	\begin{align*}
		\mch\lp 2,\dotsc,d; r,k\rp \ra \mch\lp 2,\dotsc,d;r \rp,\\
		\mcm\lp 2,\dotsc,d; r,k\rp \ra \mcm\lp 2,\dotsc,d;r \rp,
	\end{align*}
	
	\noindent
	are surjective. \footnote{This is not the entirety of Debarre and Manivel's result; see Theorem 2.1 of \cite{DebarreManivel1998} for more details.}
\end{theorem}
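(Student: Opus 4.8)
The plan is to realize both natural maps as the two projections from a single incidence variety and to reduce surjectivity to one dimension count together with one local deformation-theoretic computation; I will sketch the argument in the language of Definition~\ref{def:Parameter And Moduli Spaces Of Intersections Of Hypersurfaces}, absorbing the $\ell_1$ hyperplane factors by replacing $\PP_\CC^r$ with the $(r-\ell_1)$-plane they cut out (so that, after relabelling, one may assume $\ell_1 = 0$). Write $\delta := (k+1)(r-k) - \SL_{i=1}^d \ell_i \binom{k+i}{i}$ for the quantity in the hypothesis, and let $I \subseteq \mch(2,\dotsc,d;r) \times \GR(k,r)$ be the incidence variety of pairs $(V,\Lambda)$ with $\Lambda \subseteq V$. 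First I would show that $I$ is irreducible of dimension $\dim \mch(2,\dotsc,d;r) + \delta$: for a fixed $k$-plane $\Lambda$, the degree-$i$ forms vanishing on $\Lambda$ form a linear subspace of $\CC[x_0,\dotsc,x_r]_{(i)}$ of codimension exactly $\binom{k+i}{i}$, since restriction of forms to $\Lambda \cong \PP_\CC^k$ is surjective onto a space of dimension $\binom{k+i}{i}$. Hence the projection $I \ra \GR(k,r)$ exhibits $I$ as a projective subbundle of the trivial bundle over the (irreducible) Grassmannian, with fibre a product of projective spaces of the stated codimensions; this forces $I$ to be irreducible, and the dimension formula follows from $\dim \GR(k,r) = (k+1)(r-k)$.

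Next, let $p : I \ra \mch(2,\dotsc,d;r)$ be the first projection; it is proper, being the restriction to the closed subvariety $I$ of the (projective) projection off the Grassmannian factor, so $p(I)$ is closed, and it suffices to show $p(I)$ has the same dimension as its target. By the fibre-dimension theorem, for every $(V,\Lambda) \in I$ one has $\dim_V p(I) \geq \dim_{(V,\Lambda)} I - \dim_{[\Lambda]} p^{-1}(V)$, where $p^{-1}(V)$ is the Fano scheme $F_k(V)$ of $k$-planes on $V$; since $I$ is irreducible of dimension $\dim \mch(2,\dotsc,d;r) + \delta$, it is therefore enough to produce one $V$ of the prescribed type, with one $k$-plane $\Lambda \subseteq V$, for which $\dim_{[\Lambda]} F_k(V) \leq \delta$. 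Then $\dim_V p(I) \geq \dim \mch(2,\dotsc,d;r)$, so the closed set $p(I)$ is all of the irreducible target, proving surjectivity of $\mch(2,\dotsc,d;r,k) \ra \mch(2,\dotsc,d;r)$. The moduli-space statement then follows formally: $\mcm(2,\dotsc,d;r,k) \ra \mcm(2,\dotsc,d;r)$ is obtained from $p$ by passing to GIT quotients by the same group $\PGL(\CC,r+1)$ acting compatibly on source and target, and surjectivity descends to the quotients.

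The remaining and genuinely substantive point — and the one I expect to be the only real obstacle — is the construction of such a pair $(V,\Lambda)$. I would approach it through deformation theory: the Zariski tangent space to $F_k(V)$ at $[\Lambda]$ is $H^0(\Lambda, N_{\Lambda/V})$, which sits in the normal-bundle sequence
\begin{equation*}
0 \ra N_{\Lambda/V} \ra N_{\Lambda/\PP_\CC^r} \ra \bigoplus_{i,j} \mco_\Lambda(i) \ra 0, \qquad N_{\Lambda/\PP_\CC^r} \cong \mco_\Lambda(1)^{\oplus(r-k)},
\end{equation*}
whose right-hand map restricts to $\Lambda$ the differentials of the defining forms $f_{i,j}$. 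If the $f_{i,j}$ and $\Lambda$ are chosen so that this map is surjective both as a map of sheaves on $\PP_\CC^k$ and on global sections, then $N_{\Lambda/V}$ is a vector bundle with $h^0 = (k+1)(r-k) - \SL_i \ell_i \binom{k+i}{i} = \delta$, and, since $H^1(\PP_\CC^k,\mco(1)) = 0$ and $H^1(\PP_\CC^k,\mco(i)) = 0$, the long exact sequence gives $h^1(N_{\Lambda/V}) = 0$; hence $F_k(V)$ is smooth of dimension exactly $\delta$ at $[\Lambda]$, as required. Producing equations with this property — for instance via a monomial or Fermat-type model, or by degenerating $V$ to a transverse union of hyperplanes and tracking a distinguished $k$-plane — is the crux; it is where the hypothesis $\delta \geq 0$ enters (so that $h^0 = \delta$ is not forced negative) and it is exactly here that one must exclude the case of a single quadric, whose Fano scheme is homogeneous and of non-generic dimension (it can even be empty when $\delta = 0$, e.g. the planes on a smooth quadric threefold), so that that case genuinely fails and must be handled separately by classical means.
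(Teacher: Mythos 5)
First, a remark on the ground truth: the paper does not prove this statement. It is imported as Theorem 2.1 of \cite{DebarreManivel1998}, with a footnote deferring entirely to that reference, so there is no in-paper proof to compare against; your proposal has to be judged as a proof of Debarre--Manivel's theorem itself.

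Your reduction is the standard one and is sound as far as it goes: the incidence variety $I$ is an irreducible projective-bundle over $\GR(k,r)$ of dimension $\dim\mch(2,\dotsc,d;r)+\delta$, the first projection is proper with closed image, and by upper semicontinuity of fibre dimension it suffices to exhibit a single pair $(V,\Lambda)$ with $\dim_{[\Lambda]}F_k(V)\le\delta$; the normal-bundle computation correctly identifies what such a pair must satisfy, and the descent to the GIT quotients is routine. The genuine gap is exactly the step you flag as ``the crux'': you never produce the pair $(V,\Lambda)$, and this existence statement is not a routine verification --- it is the entire substantive content of Debarre and Manivel's theorem. Concretely, you need defining forms $f_{i,j}$ vanishing on a fixed $\Lambda\cong\PP_\CC^k$ whose differentials induce a map $\mco_\Lambda(1)^{\oplus(r-k)}\ra\bigoplus_{i,j}\mco_\Lambda(i)$ that is surjective both as a map of sheaves and on global sections. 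Surjectivity on global sections is a nontrivial condition on a multiplication map between spaces of forms on $\PP_\CC^k$ of total dimensions $(k+1)(r-k)$ and $\SL_{i}\ell_i\binom{k+i}{i}$; the hypothesis $\delta\ge 0$ only says the dimensions permit surjectivity, not that any choice achieves it, and indeed the excluded quadric case shows it can fail. Debarre and Manivel establish the existence of such a pair by an induction on the multidegree, adding one hypersurface at a time and bounding the dimension of the locus of ``bad'' $k$-planes; no Fermat-type model or hyperplane degeneration is known to work uniformly in $d$, $k$, and the $\ell_i$, so gesturing at one does not close the gap. Until the seed pair is constructed, your argument shows only that $p(I)$ is a closed subvariety of $\mch(2,\dotsc,d;r)$, not that it is all of it. (Your diagnosis of why a single quadric must be excluded --- e.g.\ planes on a smooth quadric threefold, where $\delta=0$ but the Fano scheme of $2$-planes is empty --- is correct.)
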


We will use this result to deal with $k$-planes on intersections of hypersurfaces directly (instead of using a recursive process on the set of defining hypersurfaces) to obtain new thresholds for upper bounds on $\RD(n)$ for $n \geq 348,489,068,134$ in Theorem \ref{thm:Determining A Point On Tau(d+k)}.

Similarly to Wolfson, we can use the upper bound
\begin{equation*}
	\RD\lp \mcm(2,\dotsc,d;r,k) \ra \mcm(2,\dotsc,d;r) \rp\leq \dim\lp \mcm(2,\dotsc,d;r) \rp,
\end{equation*}

\noindent
once we establish that the Tschirnhaus complete intersections $\tau_{1,\dotsc,m}$ are semi-stable in Lemma \ref{lem:kPlanes On An Intersection Of Type (2,...,d)} and Proposition \ref{prop:Semi-Stability of Tschirnhaus Complete Intersections}.

\begin{definition}\label{def:NotationForkPlanesLemma} \textbf{(Notation for Lemma \ref{lem:kPlanes On An Intersection Of Type (2,...,d)} )}\\
	To succinctly state Lemma \ref{lem:kPlanes On An Intersection Of Type (2,...,d)} and its applications, we define a new set function 
	\begin{align*}
		\vartheta:\ZZ_{\geq 3} \times \ZZ_{\geq 1} \ra \ZZ_{\geq 1},
	\end{align*}	
	
	\noindent	
	where $\vartheta(d,k)$ is the smallest positive integer $r$ such that
	\begin{align*}
		(k+1)(r-k) - \SL_{i=2}^d \binom{k+i}{i} \geq 0.
	\end{align*}	
	
	Using the combinatorial identity in equation (\ref{eqn:Combinatorial Identity}), we can equivalently write
	\begin{align*}
		\vartheta(d,k) = k + \left\lceil \frac{1}{k+1} \lp \binom{k+d+1}{d} - (k+2) \rp \right\rceil.
	\end{align*}	
\end{definition}

\begin{lemma}\label{lem:kPlanes On An Intersection Of Type (2,...,d)}  \textbf{($k$-Planes on an Intersection of Hypersurfaces of Type $(2,\dotsc,d)$)}\\
Let $d \geq 3$, and $V \in \mcs(2,\dotsc,d;r)(K)$ for some $\CC$-field $K$. For any $k \geq 1$, if $r \geq \vartheta(d,k)$, then we can determine a $k$-plane on $V$ over an extension $L/K$ with
\begin{align*}
	\RD(L/K) \leq \dim\lp \mcm(2,\dotsc,d;\vartheta(d,k) \rp.
\end{align*}
\end{lemma}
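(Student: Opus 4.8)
The plan is to reduce to the boundary case $r = \vartheta(d,k)$ and then invoke the Debarre--Manivel theorem (Theorem \ref{thm:DebarreManivel}). Suppose first that $r > \vartheta(d,k)$. Choosing a $K$-rational $\vartheta(d,k)$-plane $\Pi \subseteq \PP_K^r$ in sufficiently general position, the linear section $V' := V \cap \Pi$ is again an intersection of hypersurfaces of type $(2,\dotsc,d)$, now inside $\Pi \cong \PP_K^{\vartheta(d,k)}$, and any $k$-plane on $V'$ is in particular a $k$-plane on $V$. Provided $\Pi$ is chosen so that $V'$ is moreover semi-stable -- see the discussion of obstacles below -- this reduces us to the case $r = \vartheta(d,k)$, which I assume henceforth, writing $V$ for $V'$.

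Now apply Theorem \ref{thm:DebarreManivel} with $\ell_1 = 0$ and $\ell_i = 1$ for $2 \leq i \leq d$. The numerical hypothesis $(k+1)(r-k) - \SL_{i=2}^d \binom{k+i}{i} \geq 0$ is precisely the defining inequality for $r = \vartheta(d,k)$ in Definition \ref{def:NotationForkPlanesLemma}, and since $d \geq 3$ the intersection $V$ is cut out by $d-1 \geq 2$ forms of distinct degrees $2,\dotsc,d$ and so is not a quadric hypersurface. Hence the natural map
\begin{equation*}
	\mcm(2,\dotsc,d;\vartheta(d,k),k) \ra \mcm(2,\dotsc,d;\vartheta(d,k))
\end{equation*}
is surjective. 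As $V$ is semi-stable over $K$, it determines a point $[V] \in \mcm(2,\dotsc,d;\vartheta(d,k))(K)$. Lifting this point along the surjection above -- exactly as in Wolfson's method (Example \ref{ex:WolfsonsMethod}) and the discussion preceding this lemma: restrict the source to a generic linear section of matching dimension to make the map generically finite, then apply the bound (\ref{eqn:FW Lemma 2.5}) -- produces a $k$-plane on $V$ over an extension $L/K$ with
\begin{equation*}
	\RD(L/K) \leq \RD\lp \mcm(2,\dotsc,d;\vartheta(d,k),k) \ra \mcm(2,\dotsc,d;\vartheta(d,k)) \rp \leq \dim\lp \mcm(2,\dotsc,d;\vartheta(d,k)) \rp.
\end{equation*}

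I expect the genuine content to lie in the reduction step. One must know that a sufficiently general $K$-rational linear section of a semi-stable intersection of type $(2,\dotsc,d)$ is again semi-stable (and of type $(2,\dotsc,d)$): this is what lets $[V']$ be regarded as a point of $\mcm(2,\dotsc,d;\vartheta(d,k))$ and so keeps the bound at $\dim(\mcm(2,\dotsc,d;\vartheta(d,k)))$ rather than the larger $\dim(\mcm(2,\dotsc,d;r))$, which increases with $r$ in the relevant range. A secondary point is the descent from the coarse moduli point produced by the lifting to an honest $k$-plane on the given $V$; this is the standard descent for the $\PGL(\CC,\vartheta(d,k)+1)$-action used throughout \cite{Wolfson2021}, made possible by the fact that $V$ itself is an intersection over $K$ representing the moduli point $[V]$. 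The remaining bookkeeping -- the explicit formula for $\vartheta(d,k)$ and nonemptiness of the moduli space in the relevant range -- is immediate from Remark \ref{rem:DimensionOfParameterAndModuliSpaces} and Definition \ref{def:NotationForkPlanesLemma}.
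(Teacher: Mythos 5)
Your proposal is correct and follows essentially the same route as the paper: reduce to $r=\vartheta(d,k)$ by restriction, invoke Theorem \ref{thm:DebarreManivel} for surjectivity of $\mcm(2,\dotsc,d;\vartheta(d,k),k) \ra \mcm(2,\dotsc,d;\vartheta(d,k))$, and bound the resolvent degree of the lift (which the paper phrases as the pullback of this map along $\AAA_K^0 \ra \mcm(2,\dotsc,d;\vartheta(d,k))$) by the dimension of the base via Lemma 2.5 of \cite{FarbWolfson2019}. The semi-stability of a general linear section, which you rightly flag, is indeed the only point the paper leaves implicit in its one-line ``by restriction.''
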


\begin{proof}
	First, observe that it suffices to prove the case where $r=\vartheta(d,k)$ by restriction. Theorem \ref{thm:DebarreManivel} then yields that $V$ contains a $k$-plane. We identify $V$ with $\AAA_K^0 \ra \mcm(2,\dotsc,d;\vartheta(d,k))$ and note that the resolvent degree of determining a $k$-plane is exactly the resolvent degree of the map
\begin{equation*}
	\pi_K:\AAA_K^0 \times_{\mcm(2,\dotsc,d;\vartheta(d,k))} \mcm(2,\dotsc,d;\vartheta(d,k),k) \ra \AAA_K^0
\end{equation*}

\noindent
determined by the pullback square
\[
\begin{tikzcd}
	\AAA_K^0 \times_{\mcm(2,\dotsc,d;\vartheta(d,k))} \mcm(2,\dotsc,d;\vartheta(d,k),k) \arrow{dd}[swap]{\pi_K} \arrow{rr}{\pi_\mcm} & &\mcm(2,\dotsc,d;\vartheta(d,k),k) \arrow{dd}\\
	\\
	\AAA_K^0 \arrow{rr} & &\mcm(2,\dotsc,d;\vartheta(d,k)).
\end{tikzcd}
\]

\noindent
However, Lemma 2.5 of \cite{FarbWolfson2019} yields that
\begin{align*}
	\RD\lp \pi_K \rp
	\leq \RD\lp \mcm(2,\dotsc,d;\vartheta(d,k),k) \ra \mcm(2,\dotsc,d;\vartheta(d,k)) \rp
	&\leq \dim\lp \mcm(2,\dotsc,d;\vartheta(d,k)) \rp.
\end{align*}
\end{proof}

\begin{prop}\label{prop:Semi-Stability of Tschirnhaus Complete Intersections} \textbf{(Semi-Stability of Tschirnhaus Complete Intersections)}\\
For each $d \geq 3$ and $n \geq d+2$, $\tau_{1,\dotsc,d} \in \mcs(2,\dotsc,d;n-2)(K_n)$.
\end{prop}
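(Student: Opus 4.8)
The plan is to reduce the geometry of $\tau_{1,\dotsc,d}$ by passing to $\overline{K_n}$ and a projective change of coordinates, identify it with the classical ``vanishing power sums'' complete intersection, prove that the latter is smooth, and conclude by GIT-stability of smooth complete intersections. Concretely, recall the classical description of a Tschirnhaus transformation (cf.\ Remark~\ref{rem:DescriptionOfTschirnhausTransformations}): if $\alpha_1,\dotsc,\alpha_n \in \overline{K_n}$ are the roots of $\phi_n$ (pairwise distinct, as $\phi_n$ has nonzero discriminant) and $g(x) = w_0 + w_1 x + \cdots + w_{n-1}x^{n-1}$, then the polynomial $\psi$ attached to $[w_0:\cdots:w_{n-1}]$ has roots $\beta_i = g(\alpha_i)$, so each $b_m = (-1)^m e_m(\beta_1,\dotsc,\beta_n)$. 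Since the $\alpha_i$ are distinct, the linear map $L\colon (w_0,\dotsc,w_{n-1}) \mapsto (\beta_1,\dotsc,\beta_n)$ has invertible (Vandermonde) matrix $(\alpha_i^{\,j})$, hence defines an element of $\PGL(\overline{K_n},n)$ carrying $\VV(b_1,\dotsc,b_d)$ onto $\VV(e_1,\dotsc,e_d) \subseteq \PP_{\overline{K_n}}^{n-1}$, where $e_m$ now denotes the $m^{th}$ elementary symmetric polynomial in the coordinates. By Newton's identities (invertible in characteristic $0$) the ideal $(e_1,\dotsc,e_d)$ equals $(p_1,\dotsc,p_d)$, with $p_k = \sum_i x_i^k$; and since $b_1$ (hence $p_1$) is linear, using the identification of an intersection of type $(1,2,\dotsc,d)$ in $\PP^{n-1}$ with one of type $(2,\dotsc,d)$ in $\PP^{n-2}$ (Definition~\ref{def:The Type Of An Intersection of Hypersurfaces}), this exhibits $\tau_{1,\dotsc,d} \subseteq \tau_1 \cong \PP_{K_n}^{n-2}$ as projectively equivalent, over $\overline{K_n}$, to $\VV(\bar p_2,\dotsc,\bar p_d)$ inside the hyperplane $\{p_1 = 0\} \cong \PP^{n-2}$, where $\bar p_k$ is the restriction of $p_k$.

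Next I would show this model is smooth. The Jacobian of $(p_1,\dotsc,p_d)$ at $(x_1,\dotsc,x_n)$ has rows $k\,(x_1^{k-1},\dotsc,x_n^{k-1})$ for $1 \leq k \leq d$; in characteristic $0$ its rank equals that of the matrix with rows $(x_1^{j},\dotsc,x_n^{j})$, $0 \leq j \leq d-1$, which drops below $d$ exactly when the $x_i$ take at most $d-1$ distinct values. But at a point of $\VV(p_1,\dotsc,p_d)$ whose coordinates take distinct values $c_1,\dotsc,c_m$ ($m \leq d-1$) with multiplicities $\mu_j$, the equations $p_1 = \cdots = p_m = 0$ read $\sum_j (\mu_j c_j)\,c_j^{k-1} = 0$ for $1 \leq k \leq m$, whose coefficient matrix is an $m \times m$ invertible Vandermonde; hence $\mu_j c_j = 0$, forcing all $c_j = 0$, which contradicts lying in $\PP^{n-1}$. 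So the Jacobian has full rank $d$ everywhere on $\VV(p_1,\dotsc,p_d)$, making it a smooth complete intersection of pure dimension $n-1-d$, which is $\geq 1$ once $n \geq d+2$; the same then holds for $\VV(\bar p_2,\dotsc,\bar p_d)$ as a smooth intersection of hypersurfaces of type $(2,\dotsc,d)$ with $d \geq 3$ in $\PP^{n-2}$.

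Finally, a smooth complete intersection of hypersurfaces of type $(2,\dotsc,d)$ with $d \geq 3$ in $\PP^{n-2}_{\overline{K_n}}$ is GIT-stable, hence semistable, for the natural linearization (Benoist's stability theorem for smooth complete intersections; see the moduli-theoretic discussion in \cite{Wolfson2021}). Thus $\VV(\bar p_2,\dotsc,\bar p_d)$ lies in $\mcs(2,\dotsc,d;n-2)$ over $\overline{K_n}$, and so does $\tau_{1,\dotsc,d}$ by the $\PGL$-invariance of the semistable locus together with the projective-linear equivalence above. Since $\mcs(2,\dotsc,d;n-2)$ is a Zariski-open subscheme of $\mch(2,\dotsc,d;n-2)$, a $K_n$-point lands in it as soon as its base change to $\overline{K_n}$ does, which yields $\tau_{1,\dotsc,d} \in \mcs(2,\dotsc,d;n-2)(K_n)$.

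The step I expect to be the main obstacle is the last one: matching the paper's description of the semistable locus with the GIT-semistable locus for the correct product linearization, reconciling the type-$(1,\dotsc,d)$-in-$\PP^{n-1}$ and type-$(2,\dotsc,d)$-in-$\PP^{n-2}$ pictures at the level of linearizations, and invoking Benoist's theorem in precisely the form needed for complete intersections of mixed degree having a factor of degree $\geq 3$; by comparison the reduction to the power-sum variety and the verification of its smoothness are elementary.
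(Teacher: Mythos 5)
Your route is genuinely different from the paper's. The paper never analyzes the geometry of the full intersection: it views $\tau_{1,\dotsc,d}\subseteq\tau_1\cong\PP_{K_n}^{n-2}$, takes as its working criterion for membership in $\mcs(2,\dotsc,d;n-2)$ the existence of a $\PGL(K_n,n-1)$-invariant polynomial not vanishing at the corresponding parameter point, quotes Theorem 2.12 of \cite{Wolfson2021} to the effect that $\tau_{1,2,3}$ is generically smooth and hence semi-stable, and then pulls the resulting invariant back along the projection $\mch(2,\dotsc,d;n-2)\ra\mch(2,3;n-2)$. You instead prove from scratch, via the Vandermonde change of coordinates to the power-sum model and the Jacobian rank computation, that over $\overline{K_n}$ the whole of $\tau_{1,\dotsc,d}$ is projectively equivalent to a smooth complete intersection of type $(2,\dotsc,d)$ in $\PP^{n-2}$; those two steps are correct, elementary, and in fact give more than the paper uses (the paper only imports information about the $(2,3)$-part), and your descent from $\overline{K_n}$ to $K_n$ via openness of the semi-stable locus is fine.

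The weak link is exactly the one you flag: the appeal to Benoist. Benoist's stability theorem concerns the GIT problem for complete intersections, with its own parameter space (where, for mixed degrees, the higher-degree generators are only defined modulo the lower-degree part of the ideal) and its own linearization; the paper's $\mch(2,\dotsc,d;r)$ is the product $\mch(2;r)\times\cdots\times\mch(d;r)$ of hypersurface parameter spaces, with no linearization specified and with the semi-stable locus pinned down only through the operational criterion above. As written, your last paragraph asserts a transfer of semistability between two different GIT set-ups without carrying it out. Fortunately, once you have your smoothness/nondegeneracy computation you do not need Benoist at all: over $\overline{K_n}$ the quadric member restricted to $\tau_1$ becomes $p_2$ restricted to the hyperplane $p_1=0$, which is nondegenerate in characteristic zero (the vector $(1,\dotsc,1)$ does not lie in that hyperplane), so the discriminant of the degree-$2$ factor --- an invariant polynomial on $\mch(2;n-2)$, pulled back along the projection from $\mch(2,\dotsc,d;n-2)$ --- does not vanish at $\tau_{1,\dotsc,d}$, which is precisely the paper's criterion for semi-stability. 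This mirrors the paper's own device of pulling back an invariant from a partial factor, and with that replacement your argument is complete and self-contained.
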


\begin{proof} First, note that $\tau_1 \subseteq \PP_{K_n}^{n-1}$ is a hyperplane, so we can consider $\tau_{1,\dotsc,d} \subseteq \tau_1 \cong \PP_{K_n}^{n-2}$, e.g. as a $K_n$-point of $\mcs(2,\dotsc,d;n-2)$.
	
	From the definition of $\mcs(2,\dotsc,d;n-2)$, it suffices to show that there is some $\PGL(K_n,n-1)$-invariant polynomial in $K_n[x_0,\dotsc,x_{n-1}]$ which does not vanish at $\tau_{1,\dotsc,d}$. Theorem 2.12 of \cite{Wolfson2021} yields that $\tau_{1,2,3}$ is generically smooth, hence semi-stable. In particular, there is a $\text{PGL}(K_n,n-1)$-invariant polynomial $f(x_0,\dotsc,x_{n-1}) \in K_n[x_0,\dotsc,x_{n-1}]$ which does not vanish at $\tau_{1,2,3} \in \mch(2,3;n-2)(K_n)$. When pulled back to $\mch(2,\dotsc,d;n-2)(K_n)$ via the standard projection map, $f(x_0,\dotsc,x_{n-1})$ does not vanish at $\tau_{1,\dotsc,d}$ as well. 
\end{proof}

\begin{theorem}\label{thm:Determining A Point On Tau(d+k)} \textbf{(Determining a Point on $\tau_{1,\dotsc,d+k}$)}\\
	Fix $k,d \geq 1$. For $n \geq \vartheta(d,k)+3$, we can determine a point of $\tau_{1,\dotsc,d+k}^\circ \subseteq \mct^n$ over an extension $L/K_n$ with
	\begin{align*}
		\RD(L/K_n) \leq \max\lb \dim\lp \mcm(2,\dotsc,d;\vartheta(d,k)) \rp, \frac{(d+k)!}{d!} \rb.
	\end{align*}
\end{theorem}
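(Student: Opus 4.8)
The plan is to combine Lemma \ref{lem:kPlanes On An Intersection Of Type (2,...,d)} (which produces a $k$-plane on a semi-stable intersection of type $(2,\dotsc,d)$) with the observation that the Tschirnhaus complete intersection $\tau_{1,\dotsc,d}$ is exactly such an object, and then slice down with that $k$-plane to reduce to a low-degree root-finding problem. First I would set $r = n-2$ and note that, since $n \geq \vartheta(d,k)+3$, we have $r = n-2 \geq \vartheta(d,k)+1 \geq \vartheta(d,k)$, so the hypothesis on the ambient dimension in Lemma \ref{lem:kPlanes On An Intersection Of Type (2,...,d)} is satisfied. By Proposition \ref{prop:Semi-Stability of Tschirnhaus Complete Intersections}, $\tau_{1,\dotsc,d} \in \mcs(2,\dotsc,d;n-2)(K_n)$ (here I use $\tau_1 \cong \PP_{K_n}^{n-2}$ to view the intersection of type $(1,\dotsc,d)$ as one of type $(2,\dotsc,d)$ in $\tau_1$, exactly as in Definition \ref{def:The Type Of An Intersection of Hypersurfaces}). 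Hence Lemma \ref{lem:kPlanes On An Intersection Of Type (2,...,d)} gives an extension $L_1/K_n$ with $\RD(L_1/K_n) \leq \dim\lp \mcm(2,\dotsc,d;\vartheta(d,k)) \rp$ over which we can determine a $k$-plane $\Lambda \subseteq \tau_{1,\dotsc,d}$.

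Next I would intersect $\Lambda$ with the remaining Tschirnhaus hypersurfaces $\tau_{d+1}, \dotsc, \tau_{d+k}$. The variety $\Lambda \cap \tau_{1,\dotsc,d+k} = \Lambda \cap \tau_{d+1} \cap \cdots \cap \tau_{d+k}$ is an intersection of $k$ hypersurfaces of degrees $d+1, \dotsc, d+k$ inside $\Lambda \cong \PP_{L_1}^k$; generically its dimension is $k - k = 0$ and its degree is $(d+1)(d+2)\cdots(d+k) = \frac{(d+k)!}{d!}$. (One should check that $\Lambda \cap \tau_{1,\dotsc,d+k}$ is indeed nonempty and zero-dimensional, which holds because a generic $k$-plane on $\tau_{1,\dotsc,d}$ meets each further hypersurface properly; alternatively this is absorbed into the degree-$\frac{(d+k)!}{d!}$ bound since a zero-dimensional scheme of that degree has a point after passing to a degree-$\leq\frac{(d+k)!}{d!}$ extension.) By Proposition \ref{prop:RationalPointsOverExtensions} and Remark \ref{rem:SolvingPolynomials}, we determine a point $Q$ of $\Lambda \cap \tau_{1,\dotsc,d+k}$ over an extension $L_2/L_1$ with $\RD(L_2/L_1) \leq \RD\!\lp \frac{(d+k)!}{d!} \rp \leq \frac{(d+k)!}{d!}$, using the trivial bound $\RD(m) \leq m$.

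Finally I would verify that the point $Q$ obtained lies in the \emph{open} locus $\tau_{1,\dotsc,d+k}^\circ$ rather than at the excluded point $[1:0:\cdots:0]$. This is handled, as in the proofs of Theorems \ref{thm:The n-6 Bound} and \ref{thm:The n-7,...,n-14 Bounds}, by first restricting to a hyperplane $H$ not containing $[1:0:\cdots:0]$ — equivalently, by working with $\tau_{1,\dotsc,d} \cap H$, which is still semi-stable of type $(2,\dotsc,d)$ (intersecting with an extra hyperplane does not change the semi-stable locus of the relevant slice, and it only lowers the ambient dimension by one, so I would instead take $r = n-3$ and use $n \geq \vartheta(d,k)+3 \Rightarrow n-3 \geq \vartheta(d,k)$). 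Then $\Lambda \subseteq H$ automatically, so $Q \neq [1:0:\cdots:0]$ and $Q \in \tau_{1,\dotsc,d+k}^\circ$. Composing, $L := L_2$ satisfies $\RD(L/K_n) \leq \max\{ \dim\lp \mcm(2,\dotsc,d;\vartheta(d,k)) \rp, \frac{(d+k)!}{d!} \}$ by Remark \ref{rem:RD of a Composition}. The main obstacle is the bookkeeping around the ambient dimension and the semi-stability of the hyperplane slice $\tau_{1,\dotsc,d} \cap H$: one must be careful that passing to $H$ (to avoid $[1:0:\cdots:0]$) still leaves us with a semi-stable intersection of type $(2,\dotsc,d)$ in a projective space of dimension $\geq \vartheta(d,k)$, which is why the hypothesis is $n \geq \vartheta(d,k)+3$ and not merely $n \geq \vartheta(d,k)+2$.
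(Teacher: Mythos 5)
Your proposal is correct and follows essentially the same route as the paper: restrict to a hyperplane $H$ avoiding $[1:0:\cdots:0]$ so that $\tau_1 \cap H \cong \PP_{K_n}^{\vartheta(d,k)}$, apply Proposition \ref{prop:Semi-Stability of Tschirnhaus Complete Intersections} and Lemma \ref{lem:kPlanes On An Intersection Of Type (2,...,d)} to obtain a $k$-plane $\Lambda \subseteq \tau_{1,\dotsc,d}$, and then determine a point of $\Lambda \cap \tau_{1,\dotsc,d+k}$ by solving a polynomial of degree $\frac{(d+k)!}{d!}$. Your accounting of the ambient dimension (hence the hypothesis $n \geq \vartheta(d,k)+3$) and of why $Q$ lands in $\tau_{1,\dotsc,d+k}^\circ$ matches the paper's argument.
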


\begin{proof}
	By restriction, it suffices to prove the case where $n = \vartheta(d,k)+3$. As such, we work in $\PP_{K_n}^{\vartheta(d,k)+2}$. We then pass to a hypersurface $H$ which does not contain $[1:0:\cdots:0]$ and $\tau_1 \cap H \cong \PP_{K_n}^{\vartheta(d,k)}$ in this hypersurface. Hence, Lemma \ref{lem:kPlanes On An Intersection Of Type (2,...,d)} and Proposition \ref{prop:Semi-Stability of Tschirnhaus Complete Intersections} yield that we can determine a $k$-plane $\Lambda \subseteq \tau_{1,\dotsc,d}$ over an extension $L_1/K_n$ with
	\begin{align*}
		\RD(L_1/K_n) \leq \dim\lp \mcm(2,\dotsc,d;\vartheta(d,k)) \rp.
	\end{align*}

\noindent
Thus, $\deg\lp \tau_{1,\dotsc,d+k} \cap \Lambda \rp = \frac{(d+k)!}{d!}$ and we can determine a point $Q$ of $\tau_{1,\dotsc,d+k} \cap \Lambda \subseteq \tau_{1,\dotsc,d+k}^\circ$ over an extension of degree at most $\frac{(d+k)!}{d!}$.
\end{proof}

\begin{remark}\label{rem:WolfsonsFunctionF}  \textbf{(Wolfson's Function $F$)}\\
	In Definition 5.4 and Theorem 5.6 of \cite{Wolfson2021}, Wolfson introduces a monotone increasing function $F(r)$ such that $\RD(n) \leq n-r$ for $n \geq F(r)$; we will define this function explicitly in Section \ref{sec:ComparisonWithPriorBounds}. To remain consistent with the notation in this paper, we write $F$ as a function of $m$.
\end{remark}

	Wolfson shows shows that $\lim\limits_{m \ra \infty} \frac{B(m)}{F(m)} = \infty$, where $B(m)=(m-1)!+1$ was the previous best bounding function, established in \cite{Brauer1975}. We construct a similar function $G(m)$ and prove that $G$ similarly improves on $F$ (Theorem \ref{thm:ComparisonWithF}).

\begin{definition}\label{def:Definition of G} \textbf{(Defining the Function $G(m)$)}\\
We define $\varphi:\ZZ_{\geq 15} \times \ZZ_{\geq 1} \ra \ZZ_{\geq 1}$ by
\begin{align*}
	\varphi(d,k) = \max\lb \frac{(d+k)!}{d!}, \dim\lp \mcm(2,\dotsc,d;\vartheta(d,k)) \rp \rb.
\end{align*}

\noindent
Next, we define $G:\ZZ_{\geq 1} \ra \ZZ_{\geq 1}$  for $2 \leq m \leq 14$ by
\begin{center}
	\begin{tabular}{|c|ccccc|ccccc|}
		\hline
		$m$ &1 &2 &3 &4 &5 &6 &7 &8 &9 &10\\
		\hline
		$G(m)$ &2 &3 &4 &5 &9 &21 &109 &325 &1681 &15121\\
		\hline
	\end{tabular}
	
	\vspace{12pt}	
	
	\begin{tabular}{|c|cccc|}
		\hline
		$m$ &11 &12 &13 &14\\
		\hline
		$G(m)$ &151,201 &1,663,201 &19,958,401 &259,459,201\\
		\hline
	\end{tabular}
\end{center}

\noindent
and for $m \geq 15$ by
\begin{align*}
	G(m) = 1+\min \lb \varphi(d,m-d-1) \st 4 \leq d \leq m-1 \rb.
\end{align*}
\end{definition}

\begin{theorem}\label{thm:UpperBoundsonRD(n)} \textbf{(Upper Bounds on $\RD(n)$)}\\
	For each $m \geq 1$ and all $n \geq G(m)$, $\RD(n) \leq n-m$.
\end{theorem}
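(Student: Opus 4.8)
The plan is to verify the bound $\RD(n) \le n-m$ separately on the two regimes defined by the piecewise definition of $G$: the small cases $1 \le m \le 14$, where $G(m)$ is given by the explicit table, and the large cases $m \ge 15$, where $G(m) = 1 + \min\{\varphi(d,m-d-1) : 4 \le d \le m-1\}$. In both regimes I would first reduce, via the monotonicity of $\RD$ in the degree (which follows from Remark \ref{rem:RD of a Composition} together with Lemma \ref{lem:Upper Bound on L/K}) and the observation that restricting to a smaller $n$ only shrinks the ambient projective space, to checking the claim at $n = G(m)$ and deducing it for all larger $n$. The key engine throughout is Remark \ref{rem:RDBoundsFromTschirnhausTransformations}: if we can produce an $L$-rational point of $\tau_{1,\dotsc,m-1}^\circ$ over an extension $L/K_n$ of resolvent degree at most some bound $B$, then $\RD(n) \le \max\{B, \text{(small degrees from re-scaling)}\} $, and in all our cases the re-scaling contributions are dominated, so we conclude $\RD(n) \le n-m$.

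For $m \in \{1,2,3,4,5\}$ the statement $\RD(n)=1$ for $2 \le n \le 5$ (equivalently the bounds at $G(1)=2,\dots,G(5)=9$) is classical — Cardano/Ferrari plus the degree $\le 1$ cases — and for $m=5$ one uses that a general quintic can be solved in one-parameter families (Bring radicals, cf. \cite{Bring1786,ChenHeMcKay2017}). For $m=6$ this is exactly Theorem \ref{thm:The n-6 Bound} with $G(6)=21$, and for $m \in \{7,8,\dots,14\}$ it is Theorem \ref{thm:The n-7,...,n-14 Bounds}, whose thresholds $109, 325$ and $\frac{(\ell-1)!}{24}+1$ for $9 \le \ell \le 14$ agree with the table entries $G(7)=109$, $G(8)=325$, $G(9)=1681$, $G(10)=15121$, $G(11)=151201$, $G(12)=1663201$, $G(13)=19958401$, $G(14)=259459201$ (each the predecessor plus one of $\frac{(\ell-1)!}{24}$). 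So the small-$m$ part of the theorem is just bookkeeping: match each table value to the threshold proved in Section \ref{subsec:NewBoundsFromPolarCones}, and invoke monotonicity to pass from $n = G(m)$ to $n \ge G(m)$.

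For $m \ge 15$, fix $n \ge G(m)$ and pick the minimizing $d$ with $4 \le d \le m-1$ realizing $G(m) = 1 + \varphi(d, m-d-1)$; set $k = m-d-1 \ge 1$, so $d+k = m-1$. Since $n \ge G(m) = 1 + \varphi(d,k) \ge 1 + \frac{(d+k)!}{d!}$ and in particular $n \ge \vartheta(d,k)+3$ (this inequality needs to be checked: $\varphi(d,k) \ge \dim \mcm(2,\dots,d;\vartheta(d,k))$, which for $d \ge 4$ exceeds $\vartheta(d,k)+2$ — a routine estimate from Remark \ref{rem:DimensionOfParameterAndModuliSpaces}), Theorem \ref{thm:Determining A Point On Tau(d+k)} applies and produces a point of $\tau_{1,\dotsc,d+k}^\circ = \tau_{1,\dotsc,m-1}^\circ \subseteq \mct^n$ over an extension $L/K_n$ with
\[
	\RD(L/K_n) \le \max\left\{ \dim\left( \mcm(2,\dotsc,d;\vartheta(d,k)) \right), \tfrac{(d+k)!}{d!} \right\} = \varphi(d,k) = G(m) - 1 \le n - 1.
\]
Then Remark \ref{rem:RDBoundsFromTschirnhausTransformations} gives $\RD(n) \le n - m$, since a point of $\tau_{1,\dotsc,m-1}^\circ$ yields the normal form $z^n + c_m z^{n-m} + \cdots + c_{n-1} z + 1$, an algebraic function of $n-m$ variables, obtained over an extension of the stated resolvent degree (and the intermediate cyclic extension for $\sqrt[n]{b_n}$ contributes resolvent degree $1$).

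The main obstacle — and the one place real work is needed rather than citation — is the hypothesis $n \ge \vartheta(d,k)+3$ required by Theorem \ref{thm:Determining A Point On Tau(d+k)}: one must confirm that $\varphi(d,k) \ge \vartheta(d,k)+2$ for all $d \ge 4$, $k \ge 1$, so that $n \ge 1+\varphi(d,k)$ really does imply $n \ge \vartheta(d,k)+3$. This follows because $\dim\mcm(2,\dots,d;\vartheta(d,k)) = \binom{\vartheta(d,k)+d+1}{d} - (\vartheta(d,k)+1)^2 - (\vartheta(d,k)+d)$ grows far faster in $\vartheta(d,k)$ than linearly once $d \ge 4$ (here the condition $d \ge 4$, rather than $d \ge 3$, is exactly what buys enough room — for $d=3$ the moduli dimension can be small or the space empty), but it needs to be written out. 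A secondary point is to make sure the table values of $G(m)$ for $6 \le m \le 14$ are transcribed consistently with the thresholds in Theorems \ref{thm:The n-6 Bound} and \ref{thm:The n-7,...,n-14 Bounds}, including the ``$n >$'' versus ``$n \ge$'' conventions; since $\frac{(\ell-1)!}{24}$ is an integer for $\ell \ge 5$, the threshold $n > \frac{(\ell-1)!}{24}$ is the same as $n \ge \frac{(\ell-1)!}{24}+1 = G(\ell)$, so this is consistent.
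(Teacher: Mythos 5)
Your proposal is correct and follows essentially the same route as the paper: the small cases $1 \leq m \leq 14$ are dispatched by citing the classical results, Theorem \ref{thm:The n-6 Bound}, and Theorem \ref{thm:The n-7,...,n-14 Bounds} against the table values of $G$, and the case $m \geq 15$ combines Theorem \ref{thm:Determining A Point On Tau(d+k)} with Remark \ref{rem:RDBoundsFromTschirnhausTransformations} after checking that $n \geq G(m)$ forces $n \geq \vartheta(d,m-d-1)+3$. The one step you flag as needing to be written out --- that $\vartheta(d,m-d-1)+3$ is dominated by $\dim\lp \mcm(2,\dotsc,d;\vartheta(d,m-d-1)) \rp \leq \varphi(d,m-d-1)$ --- is exactly the inequality chain the paper supplies, so nothing is missing.
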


\begin{proof} \textbf{(Proof of Theorem \ref{thm:UpperBoundsonRD(n)})}\\
	The claim for $1 \leq m \leq 5$ is classical and is covered in \cite{Wolfson2021}. The case $m=6$ is Theorem \ref{thm:The n-6 Bound} and the cases of $7 \leq m \leq 14$ are Theorem \ref{thm:The n-7,...,n-14 Bounds}.
	
	Now, fix $m \geq 15$. For each $4 \leq d \leq m-1$, Theorem \ref{thm:Determining A Point On Tau(d+k)} yields that we can determine a point of $\tau_{1,\dotsc,m-1}^\circ \subseteq \mct^n$ over an extension of scalars $L/K_n$ with $\RD(L/K_n) \leq \varphi(d,m-d-1)$ when $n \geq \vartheta(d,m-d-1)+3$. Note that
	\begin{align*}
		\vartheta(d,m-d-1)+3 &< \binom{\vartheta(d,m-d-1)+d+1}{d} - (\vartheta(d,m-d-1)+1)^2 - (\vartheta(d,m-d-1)+d)\\
		&= \dim\lp \mcm(2,\dotsc,d;\vartheta(d,m-d-1)) \rp\\
		&\leq \varphi(m,m-d-1). 
	\end{align*}		

\noindent
It suffices to minimize over all such $d$ and so the definition of $G(m)$ and Remark \ref{rem:RDBoundsFromTschirnhausTransformations} yield the theorem.
\end{proof}

In Section \ref{sec:ComparisonWithPriorBounds}, we pin down how $G(m)$ improves on $F(m)$. First, we use $G(m)$ to establish an upper bound on $\RD(n)$ using elementary functions \`{a} la Brauer.

%%%%%%%%%%%%%%%%%%%%%%%%%%%%%%%%%%%%%%%%%%%%%%%%%%%%%%
%%%%%%%%%%%%%%%%%%%%%%%%%%%%%%%%%%%%%%%%%%%%%%%%%%%%%%
%%%%%%%%%%%%%%%%%%%%%%%%%%%%%%%%%%%%%%%%%%%%%%%%%%%%%%
%%%%%%%%%%%%%%%%%%%%%%%%%%%%%%%%%%%%%%%%%%%%%%%%%%%%%%
%%%%%%%%%%%%%%%%%%%%%%%%%%%%%%%%%%%%%%%%%%%%%%%%%%%%%%
%%%%%%%%%%%%%%%%%%%%%%%%%%%%%%%%%%%%%%%%%%%%%%%%%%%%%%
%%%%%%%%%%%%%%%%%%%%%%%%%%%%%%%%%%%%%%%%%%%%%%%%%%%%%%

\subsection{Upper Bounds on the Bounding Function $G(m)$}\label{subsec:Upper Bounds on the Bounding function G(m)}

While $G(m)$ is simpler than $F(m)$, its definition does not immediately yield a description in terms of elementary functions. We state such a description now and use the remainder of the subsection to obtain this approximation.

\begin{theorem}\label{thm:UpperBoundOnGrowthRateofRD(n)} \textbf{(Upper Bound on the Growth Rate of $\RD(n)$)}\\
	For every positive integer $d \geq 4$, $G(2d^2+7d+6) \leq \frac{(2d^2+7d+5)!}{d!}$. Hence, for $n \geq \frac{\lp 2d^2 + 7d + 5 \rp!}{d!}$, it follows that
	\begin{align*}
		\RD(n) \leq n-2d^2-7d-6.
	\end{align*}
\end{theorem}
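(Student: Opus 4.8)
The plan is to verify that the choice $d' = d$ and $k = (2d^2+7d+6) - d - 1 = 2d^2+6d+5$ in the definition of $G(m)$ gives the claimed bound, and then invoke Theorem \ref{thm:UpperBoundsonRD(n)} for the consequence. Concretely, set $m = 2d^2+7d+6$. Since $d \geq 4$ we have $4 \leq d \leq m-1$, so $d$ is an admissible index in the minimum defining $G(m)$; hence $G(m) \leq 1 + \varphi(d, m-d-1) = 1 + \varphi(d, 2d^2+6d+5)$. It therefore suffices to show $\varphi(d, 2d^2+6d+5) \leq \frac{(2d^2+7d+5)!}{d!} - 1$, or at least $< \frac{(2d^2+7d+5)!}{d!}$ (the strict inequality plus integrality gives the ``$\leq$'' after adding $1$). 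Recall $\varphi(d,k) = \max\{ \frac{(d+k)!}{d!}, \dim(\mcm(2,\dotsc,d;\vartheta(d,k))) \}$. With $k = 2d^2+6d+5$ we get $d+k = 2d^2+7d+5$, so the factorial term is exactly $\frac{(2d^2+7d+5)!}{d!}$ — note this already shows the factorial term equals the target, so the real content is that the \emph{moduli-dimension} term is no larger than it.

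So the two key steps are: (1) compute or bound $\vartheta(d, 2d^2+6d+5)$, and (2) bound $\dim(\mcm(2,\dotsc,d;\vartheta(d,k)))$ in terms of it and compare with $\frac{(2d^2+7d+5)!}{d!}$. For (1), recall $\vartheta(d,k) = k + \lceil \frac{1}{k+1}(\binom{k+d+1}{d} - (k+2)) \rceil$. The idea is that the ``overhead'' term $\frac{1}{k+1}\binom{k+d+1}{d}$ is roughly $\frac{k^d}{(k+1)\,d!} \sim \frac{k^{d-1}}{d!}$, which for our specific $k$ of size $\sim 2d^2$ is a polynomial in $d$ of degree about $2(d-1)$; in particular $\vartheta(d,k)$ is dominated by a fixed polynomial in $d$. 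For (2), use the formula from Remark \ref{rem:DimensionOfParameterAndModuliSpaces}: $\dim(\mcm(2,\dotsc,d;r)) = \binom{r+d+1}{d} - (r+1)^2 - (r+d)$ with $r = \vartheta(d,k)$. Since $r$ is a polynomial in $d$ of degree $\sim 2(d-1)$ and the dominant term here is $\binom{r+d+1}{d} \sim \frac{r^d}{d!}$, this dimension is roughly $\frac{r^d}{d!}$, i.e. a polynomial in $d$ of degree $\sim 2d(d-1) = 2d^2 - 2d$. Meanwhile $\frac{(2d^2+7d+5)!}{d!}$ is a product of $2d^2+6d+5$ consecutive integers each at least $d+1$, hence is at least $(d+1)^{2d^2+6d+5}$, which grows doubly-exponentially faster than any fixed-degree polynomial in $d$. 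So the moduli-dimension term is dwarfed by the factorial term, and $\varphi(d,k)$ equals the factorial term $\frac{(2d^2+7d+5)!}{d!}$ (for $d\geq 4$), giving $G(m) \leq 1 + \frac{(2d^2+7d+5)!}{d!}$.

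A small gap to close: the statement claims $G(2d^2+7d+6) \leq \frac{(2d^2+7d+5)!}{d!}$, not $\leq 1 + \frac{(2d^2+7d+5)!}{d!}$; since $\varphi(d,k)$ here equals $\frac{(2d^2+7d+5)!}{d!}$ exactly once the max is resolved in favor of the factorial, one in fact gets $G(m) \le 1+\frac{(2d^2+7d+5)!}{d!}$ from the displayed definition, so to land the stated bound I would either sharpen the estimate (showing the moduli term is $\le \frac{(2d^2+7d+5)!}{d!}-1$, which is immediate from the doubly-exponential gap once $d\ge 4$ is checked — and a direct numerical check settles the boundary case $d=4$) or note that $\varphi(d,m-d-1)$ is attained by some $d'$ for which one can shave the constant; the cleanest route is the sharpened estimate. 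Finally, the second sentence of the theorem (``for $n \geq \frac{(2d^2+7d+5)!}{d!}$, $\RD(n) \leq n - 2d^2 - 7d - 6$'') is then immediate: apply Theorem \ref{thm:UpperBoundsonRD(n)} with $m = 2d^2+7d+6$, since $G(m) \leq \frac{(2d^2+7d+5)!}{d!} \leq n$. The main obstacle is step (2): carefully controlling $\binom{\vartheta(d,k)+d+1}{d}$ — and in particular the ceiling in $\vartheta$ — well enough to see it stays polynomial of the right degree in $d$, so that the comparison with the factorial is clean and uniform in $d \geq 4$; this is a routine but slightly delicate estimate on binomial coefficients with both arguments growing.
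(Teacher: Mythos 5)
There is a genuine gap, and it sits exactly where you flagged it but did not resolve it. With your choice of index $d' = d$ and $k = m-d-1$ (for $m = 2d^2+7d+6$), the factorial term in $\varphi(d,k) = \max\bigl\{ \tfrac{(d+k)!}{d!}, \dim\lp \mcm(2,\dotsc,d;\vartheta(d,k)) \rp \bigr\}$ is \emph{exactly} the target $\tfrac{(m-1)!}{d!}$, so $\varphi(d,m-d-1) \geq \tfrac{(m-1)!}{d!}$ no matter how small the moduli term is. The definition $G(m) = 1 + \min_{d'} \varphi(d',m-d'-1)$ then yields only $G(m) \leq 1 + \tfrac{(m-1)!}{d!}$, which is strictly weaker than the claim. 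Your proposed ``cleanest route'' — sharpening the estimate on the moduli-dimension term to $\leq \tfrac{(m-1)!}{d!}-1$ — cannot close this: the max is still resolved in favor of the factorial term, which equals the target on the nose. No amount of improvement on the second argument of the max helps.

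The paper's proof (via Lemma \ref{lem:TheVarphiCondition} and Corollary \ref{cor:UpperBoundOnGrowthRateofG}) takes the route you mentioned only in passing and then set aside: it evaluates the minimum at $d' = d+1$ rather than $d$. Then the factorial term becomes $\tfrac{(m-1)!}{(d+1)!}$, a factor of $d+1$ below the target, and the entire content of the argument is to show that the \emph{degree-$(d+1)$} moduli term satisfies $\dim\lp \mcm(2,\dotsc,d+1;\vartheta(d+1,m-d-2)) \rp < \tfrac{(m-1)!}{d!}$ precisely when $m \geq 2d^2+7d+6$; this is done by bounding it by $\binom{m-1+\binom{m}{d+1}}{d+1}$ and running a Stirling estimate. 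That strict inequality, together with the much smaller factorial term, absorbs the $+1$ and gives $G(m) < \tfrac{(m-1)!}{d!}$. So the correct fix is to rerun your steps (1) and (2) with $d$ replaced by $d+1$ throughout and compare against $\tfrac{(m-1)!}{d!}$ (not $\tfrac{(m-1)!}{(d+1)!}$). A secondary caution: your heuristic that $\vartheta$ and the moduli dimension are ``fixed-degree polynomials in $d$'' is not literally right — the exponents grow with $d$ (roughly $d^{d^2}$ versus $d^{4d^2}$ for the factorial) — so the comparison, while true, needs the quantitative form the paper supplies rather than a polynomial-versus-factorial soundbite; this is where the threshold $m \geq 2d^2+7d+6$ actually comes from.
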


To prove Theorem \ref{thm:UpperBoundOnGrowthRateofRD(n)}, we establish a simple criterion for $m$, in terms of $d$, so that we can conclude that $G(m) < \frac{(m-1)!}{d!}$ when that criterion is met.

Recall that $\vartheta(d,k)$ is defined such that an intersection of hypersurfaces of type $(1,\dotsc,d)$ in $\PP^r$ contains a $k$-plane when $r \geq \vartheta(d,k)$. We begin by approximating $\vartheta(d,m-d-1)$ above. 

\begin{lemma}\label{lem:UpperBoundOnVartheta} \textbf{(Upper Bound on $\vartheta$)}\\
	Fix $m > d  \geq 4$. Then
	\begin{align*}
		\vartheta(d,m-d-1) \leq m-d-2 + \binom{m}{d}.
	\end{align*}
\end{lemma}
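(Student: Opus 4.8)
The plan is to unwind the definition of $\vartheta(d,k)$ with $k = m-d-1$ and bound the ceiling term directly. Recall that
\begin{align*}
	\vartheta(d,m-d-1) = (m-d-1) + \left\lceil \frac{1}{m-d} \lp \binom{m}{d} - (m-d+1) \rp \right\rceil,
\end{align*}
where I have substituted $k = m-d-1$, so that $k+1 = m-d$, $k+2 = m-d+1$, and $k+d+1 = m$. First I would drop the subtracted term $(m-d+1) \geq 0$ inside the ceiling, which only increases the expression, to get
\begin{align*}
	\vartheta(d,m-d-1) \leq (m-d-1) + \left\lceil \frac{1}{m-d}\binom{m}{d} \right\rceil.
\end{align*}

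Next I would bound the ceiling using $\lceil x \rceil \leq x + 1$ when $x$ is not an integer, or more robustly $\lceil x/N \rceil \leq \lceil x \rceil / 1$... actually the clean route is: for a positive integer $N$ and positive integer $A$, $\lceil A/N \rceil \leq A - (N-1)$ whenever $N \geq 1$ and $A \geq N$, since $\lceil A/N\rceil \le (A + N - 1)/N = A/N + (N-1)/N \le A$ is too weak; instead use that $\lceil A/N \rceil \le A - N + 1$ holds because $A/N \le A - N + 1 \iff A \le N(A-N+1) = NA - N^2 + N \iff N^2 - N \le NA - A = A(N-1) \iff N \le A$ (for $N \geq 2$; the case $N=1$ is trivial since then LHS $= A = $ RHS). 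Applying this with $N = m-d \geq 2$ (as $m > d$ forces $m - d \geq 1$, and I should check $m-d \geq 2$, i.e. handle $m = d+1$ separately where $k=0$ and the bound is immediate) and $A = \binom{m}{d}$ — noting $\binom{m}{d} \geq m - d$ for $m > d \geq 4$, which is easy — gives
\begin{align*}
	\left\lceil \frac{1}{m-d}\binom{m}{d}\right\rceil \leq \binom{m}{d} - (m-d) + 1.
\end{align*}
Combining,
\begin{align*}
	\vartheta(d,m-d-1) \leq (m-d-1) + \binom{m}{d} - (m-d) + 1 = \binom{m}{d} - 1 \leq m - d - 2 + \binom{m}{d},
\end{align*}
which is in fact slightly stronger than claimed, so the stated inequality follows a fortiori.

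The only real subtlety — and the step I would be most careful about — is the elementary inequality $\lceil A/N \rceil \le A - N + 1$ and verifying its hypothesis $A \ge N$, i.e. $\binom{m}{d} \ge m-d$; this holds since $\binom{m}{d} \ge \binom{m}{1} = m > m-d$ when $1 \le d \le m-1$, using $d \le m-1$ and $\binom{m}{d} \ge m$ for $1 \le d \le m-1$. I would also double-check the boundary cases $m = d+1$ and $m = d+2$ by hand, where $k = m-d-1 \in \{0,1\}$: for $k=0$ the defining inequality $(0+1)(r-0) - \sum_{i=2}^d \binom{i}{i} = r - (d-1) \geq 0$ gives $\vartheta(d,0) = d-1$, and indeed $d - 1 \le (d+1) - d - 2 + \binom{d+1}{d} = -1 + (d+1) = d$, so the bound holds; the $k=1$ case is similar. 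With these checks in place the argument is complete. I expect the entire proof to be short; the bulk of the work is the clean manipulation of the ceiling function, with no genuine obstacle.
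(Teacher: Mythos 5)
Your argument is essentially the paper's: both unwind Definition \ref{def:NotationForkPlanesLemma} at $k=m-d-1$ and bound the ceiling term by a crude estimate, so the route is the same. One arithmetic slip to fix: $(m-d-1)+\binom{m}{d}-(m-d)+1=\binom{m}{d}$, not $\binom{m}{d}-1$, so your chain actually gives $\vartheta(d,m-d-1)\leq\binom{m}{d}$, which is $\leq m-d-2+\binom{m}{d}$ only when $m\geq d+2$; consequently the separate verification you carry out at $m=d+1$ (where $\vartheta(d,0)=d-1\leq d$) is genuinely necessary rather than an optional sanity check, and with it the proof is complete. The paper avoids this case split by not discarding the subtracted term: since $\lceil x\rceil\leq M$ for an integer $M$ iff $x\leq M$, the bound $\left\lceil \frac{1}{m-d}\lp\binom{m}{d}-(m-d+1)\rp\right\rceil\leq\binom{m}{d}-1$ reduces to $\binom{m}{d}-1\leq(m-d)\binom{m}{d}$, which holds uniformly for all $m>d$.
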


\begin{proof}
We first recall Definition \ref{def:NotationForkPlanesLemma}:
	\begin{align*}
		\vartheta(d,k) = k + \left\lceil \frac{1}{k+1} \lp \binom{k+d+1}{d} - (k+2) \rp \right\rceil.
	\end{align*}
	
	\noindent
	By using the identification $k = m-d-1$, we observe that
	\begin{align*}
		\vartheta(d,m-d-1) &= (m-d-1) + \left\lceil \frac{1}{(m-d-1)+1} \lp \binom{(m-d-1)+d+1}{d} - ((m-d-1)+2) \rp \right\rceil,\\
		&= (m-d-1) + \left\lceil \frac{1}{m-d} \lp \binom{m}{d} - (m-d+1) \rp \right\rceil,\\
		&\leq m-d-2 + \binom{m}{d}.
	\end{align*}
\end{proof}

\begin{corollary}\label{cor:UpperBoundOnParameterSpaceDimension} \textbf{(Upper Bound on Parameter Space Dimension)}\\
	Fix $m > d \geq 4$. Then,
	\begin{align*}
		\dim\lp \mch(2,\dotsc,d;\vartheta(d,m-d-1)) \rp \leq \binom{m-1+\binom{m}{d}}{d} - \lp m-1 + \binom{m}{d} \rp.
	\end{align*}
\end{corollary}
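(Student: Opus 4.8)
The plan is to simply combine Lemma~\ref{lem:UpperBoundOnVartheta} with the dimension formula for $\mch(2,\dotsc,d;r)$ recorded in Remark~\ref{rem:DimensionOfParameterAndModuliSpaces}, using the monotonicity of both ingredients. First I would recall from Remark~\ref{rem:DimensionOfParameterAndModuliSpaces} that, via the combinatorial identity~(\ref{eqn:Combinatorial Identity}), one has
\begin{align*}
	\dim\lp \mch(2,\dotsc,d;r) \rp = \binom{r+d+1}{d} - (r+d+1)
\end{align*}
whenever this space is nonempty. The right-hand side should be rewritten in a form that makes its monotonicity in $r$ transparent: setting $s = r+1$, it equals $\binom{s+d}{d} - (s+d)$, and since $\binom{s+d}{d}$ grows faster than $s+d$ as $s$ increases (for $d \geq 2$), the function $r \mapsto \binom{r+d+1}{d} - (r+d+1)$ is nondecreasing on the relevant range. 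So it suffices to plug in an upper bound for $\vartheta(d,m-d-1)$.

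Next I would invoke Lemma~\ref{lem:UpperBoundOnVartheta}, which gives $\vartheta(d,m-d-1) \leq m-d-2 + \binom{m}{d}$. Substituting $r = m-d-2+\binom{m}{d}$ into the formula above, we get $r+d+1 = m-1+\binom{m}{d}$, so
\begin{align*}
	\dim\lp \mch(2,\dotsc,d;\vartheta(d,m-d-1)) \rp \leq \binom{m-1+\binom{m}{d}}{d} - \lp m-1+\binom{m}{d} \rp,
\end{align*}
which is exactly the claimed bound. The only genuine content beyond this substitution is the monotonicity argument, so I would spell that out carefully — in particular checking that $\vartheta(d,m-d-1)$ is itself at least large enough that the dimension formula applies (i.e.\ that we are in the nonempty regime), or else noting that the bound is trivially true when the space is empty since then we may take the dimension to be $-\infty$ or simply omit that case from consideration as in the convention of Remark~\ref{rem:DimensionOfParameterAndModuliSpaces}.

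I do not expect any real obstacle here; the corollary is a mechanical consequence of the preceding lemma. The one point requiring a line of justification is that $\binom{r+d+1}{d} - (r+d+1)$ is monotone nondecreasing in $r$ for $d \geq 4$, which follows because incrementing $r$ by $1$ changes $\binom{r+d+1}{d}$ by $\binom{r+d}{d-1} \geq 1$ and changes $r+d+1$ by $1$, and for $d \geq 2$ and $r \geq 0$ we have $\binom{r+d}{d-1} \geq d \geq 2 > 1$. Hence replacing the exact value $\vartheta(d,m-d-1)$ by the upper bound from Lemma~\ref{lem:UpperBoundOnVartheta} only increases the dimension, giving the stated inequality.
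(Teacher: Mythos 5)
Your proposal is correct and follows essentially the same route as the paper: apply the dimension formula from Remark \ref{rem:DimensionOfParameterAndModuliSpaces}, observe that $r \mapsto \binom{r+d+1}{d} - (r+d+1)$ is non-decreasing, and substitute the bound on $\vartheta(d,m-d-1)$ from Lemma \ref{lem:UpperBoundOnVartheta}. Your explicit justification of the monotonicity (via the increment $\binom{r+d}{d-1} \geq d > 1$) is a detail the paper leaves implicit, but otherwise the arguments coincide.
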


\begin{proof}
	Remark \ref{rem:DimensionOfParameterAndModuliSpaces} established that
	\begin{align*}
		\dim\lp \mch(2,\dotsc,d;\vartheta(d,m-d-1)) \rp \leq \binom{\vartheta(d,m-d-1)+d+1}{d} - (\vartheta(d,m-d-1)+d+1),
	\end{align*}
	
	\noindent
	which is non-decreasing in $\vartheta(d,m-d-1)$. Thus, Lemma \ref{lem:UpperBoundOnVartheta} yields
	\begin{align*}
	\dim\lp \mch(2,\dotsc,d;\vartheta(d,m-d-1)) \rp \leq \binom{m-1+\binom{m}{d}}{d} - \lp m-1 + \binom{m}{d} \rp.
	\end{align*}
\end{proof}

	We will now introduce a constant $C_d$ and give a bound on $\log\lp \frac{C_d}{d+1} \rp$, both of which will be useful in the proof of Lemma \ref{lem:TheVarphiCondition}. We also remind the reader that every use of $\log$ in this paper refers to the base $e$ logarithm.

\begin{definition}\label{def:ConstantForGComputationLemma} \textbf{(Constant for the Proof of Lemma \ref{lem:TheVarphiCondition} )}\\
For each $d \geq 4$, we set
\begin{align*}
	C_d := \max\lb \binom{d+1}{i} \st 0 \leq i \leq d+1 \rb.
\end{align*}
\end{definition}

\begin{lemma}\label{lem:BoundOnLogCd} \textbf{(Bound on $\log(C_d)$)}\\
	For each $d \geq 1$, it follows that
	\begin{align*}
		\log\lp \frac{C_d}{d+1} \rp \leq d+\frac{3}{2}.
	\end{align*}
\end{lemma}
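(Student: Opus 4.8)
The plan is to bound $C_d$ by the crudest available estimate: the maximal entry in row $d+1$ of Pascal's triangle is at most the sum of all entries in that row. Concretely, since $C_d = \max\lb \binom{d+1}{i} \st 0 \leq i \leq d+1 \rb$, monotonicity of the sum over nonnegative terms gives
\begin{align*}
	C_d \leq \SL_{i=0}^{d+1} \binom{d+1}{i} = 2^{d+1},
\end{align*}
and hence $\frac{C_d}{d+1} \leq \frac{2^{d+1}}{d+1}$.

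Next I would take the (natural) logarithm of both sides, obtaining
\begin{align*}
	\log\lp \frac{C_d}{d+1} \rp \leq (d+1)\log 2 - \log(d+1).
\end{align*}
To finish, I would invoke the two elementary facts that $\log 2 < 1$ and that $\log(d+1) \geq 0$ whenever $d \geq 1$ (in fact $\log(d+1) \geq \log 2 > 0$). The first gives $(d+1)\log 2 < d+1$, and the second shows the subtracted term only helps, so $\log\lp \frac{C_d}{d+1} \rp \leq d+1 \leq d + \frac{3}{2}$, which is the claim.

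There is no genuine obstacle in this argument; the only ``choice'' is how wastefully to bound the central binomial coefficient, and the bound $C_d \leq 2^{d+1}$ is already more than enough — one could sharpen it to $C_d \leq 2^{d+1}/\sqrt{\pi(d+1)/2}$ via Stirling, but that refinement is unnecessary here. The extra $\tfrac{1}{2}$ slack on the right-hand side is not consumed by this proof; it is presumably calibrated to what is convenient later in the proof of Lemma \ref{lem:TheVarphiCondition}.
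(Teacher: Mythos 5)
Your proof is correct, and it takes a genuinely different (and simpler) route than the paper. The paper proves the lemma by splitting on the parity of $d+1$ and applying Stirling's approximation (equation \ref{eqn:Stirling's Approximation}) to the central binomial coefficient, obtaining $C_d \leq \frac{2^{d+\frac{1}{2}}e}{\pi\sqrt{\ell}}$ in the even case $d+1=2\ell$ (and the analogous estimate in the odd case), and then absorbing the constants to reach $d+\frac{1}{2}$, respectively $d+\frac{3}{2}$. Your row-sum bound $C_d \leq \sum_{i=0}^{d+1}\binom{d+1}{i} = 2^{d+1}$ avoids Stirling and the parity case split entirely, and in fact yields the sharper estimate $\log\lp \frac{C_d}{d+1}\rp \leq (d+1)\log 2 - \log(d+1)$, which is asymptotically much better than $d+\frac{3}{2}$ (roughly $0.69\,d$ versus $d$); since the lemma is only consumed in inequality (\ref{eqn:varphi9}) of Lemma \ref{lem:TheVarphiCondition}, where any bound of the form $d+O(1)$ suffices, either argument does the job, but yours is shorter and loses nothing. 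One cosmetic remark: the paper defines $C_d$ only for $d\geq 4$ (Definition \ref{def:ConstantForGComputationLemma}) while the lemma is stated for $d\geq 1$; your argument, like the paper's, works verbatim for all $d\geq 1$ once $C_d$ is read off from the same formula, so this is not a gap on your side.
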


We will frequently use Stirling's approximations for factorials, including in the proof of Lemma \ref{lem:BoundOnLogCd}, and thus state the version we use explicitly (a stronger version of which can be found in \cite{Robbins1955}):\\

\noindent
Let $a$ be an positive integer. Then,
\begin{equation}\label{eqn:Stirling's Approximation}
	\sqrt{2\pi} a^{a+\frac{1}{2}} e^{-a} \leq a! \leq a^{a+\frac{1}{2}} e^{1-a}.
\end{equation}

\begin{proof} \textbf{(Proof of Lemma \ref{lem:BoundOnLogCd})}\\
	Our proof depends on the parity of $d+1$; we begin with the case where $d+1 = 2\ell$ is even. Then, $C_d = \binom{2\ell}{\ell} = \frac{(2\ell)!}{(\ell!)^2}$ and Stirling's approximation yields	
	\begin{align*}
		C_d \leq \frac{ (2\ell)^{2\ell+\frac{1}{2}}e^{1-2\ell} }{\lp \sqrt{2\pi}\ell^{\ell+\frac{1}{2}}e^{-\ell}  \rp^2} = \frac{ 2^{d+\frac{1}{2}}e }{\pi \sqrt{\ell}}.
	\end{align*}
	
	\noindent
	Consequently,
	\begin{align*}
		\log\lp \frac{C_d}{d+1} \rp \leq \log\lp \frac{ 2^{d-\frac{1}{2}}e }{\pi \sqrt{\ell}(d+1)} \rp \leq \log\lp \frac{e}{\pi(d+1)\sqrt{\ell}} \rp + \log\lp e^{d+\frac{1}{2}} \rp \leq d + \frac{1}{2}.
	\end{align*}
	
	\noindent
	When $d+1 = 2\ell+1$ is odd, we observe
	\begin{align*}
		C_d = \binom{2\ell+1}{\ell} \leq \binom{2\ell+2}{\ell+1} \leq \frac{ 2^{d+\frac{3}{2}}e }{\pi \sqrt{\ell+1}},
	\end{align*}
	
	\noindent
	and thus
	\begin{align*}
		\log\lp \frac{C_d}{d+1} \rp \leq \log\lp \frac{ 2^{d+\frac{3}{2}}e }{\pi \sqrt{\ell+1}} \rp \leq d+\frac{3}{2}.
	\end{align*}
\end{proof}

Recall that for each $1 \leq d \leq m-1$ and when $n$ is large enough, we can determine an $(m-d-1)$-plane $\Lambda \subseteq \tau_{1,\dotsc,d}^\circ$ over an extension $L_1/K_n$ with
\begin{align*}
	\RD(L_1/K_n) \leq \dim\lp \mcm(2,\dotsc,d;\vartheta(d,m-d-1)) \rp.
\end{align*}

\noindent
In such a case, we can determine a point of $\tau_{1,\dotsc,m-1} \cap \Lambda$ over an extension $L_2/L_1$ of degree $\frac{(m-1)!}{d!}$. Hence, we set
\begin{align*}
	\varphi(d,m-d-1) = \max\lb \frac{(m-1)!}{d!}, \dim\lp \mcm(2,\dotsc,d;\vartheta(d,m-d-1)) \rp + m \rb
\end{align*}

\noindent
in Definition \ref{def:Definition of G}. We next give a condition relating $\varphi(d,m-d-1)$ and $\varphi(d+1,m-d-2)$.

\begin{lemma}\label{lem:TheVarphiCondition} \textbf{(The $\varphi$ Condition)}\\
Fix $d \geq 4$. For all $m \geq 2d^2 + 7d + 6$, it follows that
\begin{equation}\label{eqn:Varphi Condition}
	\varphi(d+1,m-d-2) < \varphi(d,m-d-1). 
\end{equation}
\end{lemma}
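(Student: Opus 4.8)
The plan is to prove the slightly stronger inequality $\varphi(d+1,m-d-2) < \frac{(m-1)!}{d!}$. Since $\varphi(d,m-d-1)$ is by definition a maximum whose first entry is $\frac{(m-1)!}{d!}$, we have $\varphi(d,m-d-1) \geq \frac{(m-1)!}{d!}$, so this stronger inequality immediately yields (\ref{eqn:Varphi Condition}). Recalling that $\varphi(d+1,m-d-2) = \max\lb \frac{(m-1)!}{(d+1)!}, \dim\lp\mcm(2,\dotsc,d+1;\vartheta(d+1,m-d-2))\rp + m \rb$, I would bound the two entries of this maximum separately. The first is immediate: $\frac{(m-1)!}{(d+1)!} = \frac{1}{d+1}\cdot\frac{(m-1)!}{d!} < \frac{(m-1)!}{d!}$. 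So everything comes down to the dimension entry.

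Write $N := \binom{m}{d+1}$. Since $m \geq 2d^2+7d+6 > d+1$, Corollary \ref{cor:UpperBoundOnParameterSpaceDimension} applies with $d$ replaced by $d+1$, and together with $\dim\lp\mcm(2,\dotsc,d+1;r)\rp \leq \dim\lp\mch(2,\dotsc,d+1;r)\rp$ (which is immediate from the dimension formulas in Remark \ref{rem:DimensionOfParameterAndModuliSpaces}, the difference being $(r+1)^2-1$) it gives
\begin{align*}
	\dim\lp\mcm(2,\dotsc,d+1;\vartheta(d+1,m-d-2))\rp + m &\leq \binom{m-1+N}{d+1} - (m-1+N) + m\\
	&= \binom{m-1+N}{d+1} - N + 1 \leq \binom{m-1+N}{d+1},
\end{align*}
the last step using $N \geq 1$. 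It therefore suffices to prove $\binom{m-1+N}{d+1} < \frac{(m-1)!}{d!}$. Here I would estimate crudely: since $m \geq d+2$ we have $N = \binom{m}{d+1} \geq \binom{m}{1} = m$, so $m-1+N < 2N$, and applying the bound $\binom{a}{d+1}\leq a^{d+1}/(d+1)!$ (to $a=m-1+N$ and to $a=m$) yields
\begin{align*}
	\binom{m-1+N}{d+1} < \frac{(2N)^{d+1}}{(d+1)!} \leq \frac{2^{d+1}}{(d+1)!}\lp\frac{m^{d+1}}{(d+1)!}\rp^{d+1} = \frac{2^{d+1}\,m^{(d+1)^2}}{\lp(d+1)!\rp^{d+2}}.
\end{align*}

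It thus remains to verify the elementary inequality $2^{d+1}m^{(d+1)^2} < \frac{(m-1)!}{d!}\lp(d+1)!\rp^{d+2}$ for all $d \geq 4$ and $m \geq 2d^2+7d+6$. Taking logarithms and discarding the nonnegative term $(d+2)\log\lp(d+1)!\rp$ on the right, it is enough to show $(d+1)\log 2 + (d+1)^2\log m + \log(d!) < \log\lp(m-1)!\rp$. I would bound $\log\lp(m-1)!\rp$ below and $\log(d!)$ above using the Stirling estimates (\ref{eqn:Stirling's Approximation}) (or simply $\log(d!)\leq d\log d$), observe that the difference of the two sides is nondecreasing in $m$ — its increment from $m-1$ to $m$ is at least $\log(m-1) - (d+1)^2/(m-1)$, which is positive because $(d+1)^2 < 2d^2+7d+6 \leq m$ and $\log(m-1) > 1$ — and hence reduce to evaluating at $m = 2d^2+7d+6$. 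At that value the right side is of order $4d^2\log d$ while the left side is of order $2d^2\log d$, so the inequality holds with room to spare for every $d \geq 4$ (already at $d=4$, $m=66$, it reads roughly $111 < 209$). The one genuinely delicate point is this final constant-chasing at $m=2d^2+7d+6$; if one prefers not to discard $(d+2)\log\lp(d+1)!\rp$, the bound $\log\lp C_d/(d+1)\rp \leq d+\frac{3}{2}$ of Lemma \ref{lem:BoundOnLogCd} is precisely the estimate available to absorb the lower-order terms. The threshold $2d^2+7d+6$ is comfortably, rather than tightly, sufficient for this step.
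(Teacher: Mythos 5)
Your proposal is correct and follows essentially the same route as the paper: both reduce the comparison of maxima to the single inequality $\binom{m-1+\binom{m}{d+1}}{d+1} < \frac{(m-1)!}{d!}$ (the paper's inequality (\ref{eqn:varphi1}), reached exactly as you reach it, via $\frac{(m-1)!}{(d+1)!}<\frac{(m-1)!}{d!}$ and the parameter-space dimension bound of Corollary \ref{cor:UpperBoundOnParameterSpaceDimension}) and then verify that inequality by crude polynomial-in-$m$ estimates plus Stirling's formula. The only difference is tactical: where the paper expands $\prod_{i=1}^{d+1}(m-i+\binom{m}{d+1})$ via the binomial theorem and controls the central coefficient $C_d$ through Lemma \ref{lem:BoundOnLogCd}, you instead use $m-1+\binom{m}{d+1}<2\binom{m}{d+1}$ together with $\binom{m}{d+1}\leq m^{d+1}/(d+1)!$ and a monotonicity-in-$m$ reduction to the boundary case $m=2d^2+7d+6$, and this closes correctly (your increment estimate is valid since $(d+1)^2<m-1$, and at the boundary the two sides are of orders $2d^2\log d$ and $4d^2\log d$ respectively, with the $d=4$ check $111<209$ confirming the constants).
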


\begin{proof} For any such $d$ and $m$, it is always true that
	\begin{align*}
		\frac{(m-1)!}{(d+1)!} < \frac{(m-1)!}{d!}.
	\end{align*}
	
	As a result, to conclude (\ref{eqn:Varphi Condition}), we need only show that
	\begin{equation}\label{eqn:varphi1}
		\frac{(m-1)!}{d!} > \binom{m-1+\binom{m}{d+1}}{d+1},
	\end{equation}
	
	\noindent
	since
	\begin{equation*}
		\binom{m-1+\binom{m}{d+1}}{d+1} > \dim\lp \mch(2,\dotsc,d;\vartheta(d,m-d-1)) \rp > \dim\lp \mcm(2,\dotsc,d;\vartheta(d,m-d-1)) \rp.
	\end{equation*}
	
	\noindent	
	Observe that
	\begin{equation}\label{eqn:varphi2}
		\binom{m-1+\binom{m}{d+1}}{d+1} = \frac{ \lp m-1+\binom{m}{d+1} \rp! }{(d+1)! \lp m-d-2+\binom{m}{d+1} \rp}
		= \frac{1}{(d+1)!} \PL_{i=1}^{d+1} \lp m-i+\binom{m}{d+1} \rp.
	\end{equation}
	
	\noindent
	Next, we approximate $\binom{m}{d+1}$:	
	\begin{equation}\label{eqn:varphi3}
		\binom{m}{d+1}  = \frac{1}{(d+1)!} \cdot \frac{m!}{(m-d-1)!} = \frac{1}{(d+1)!} \PL_{j=0}^{d+1}(m-j) \leq \frac{1}{(d+1)!}m^{d+2} \leq m^{d+2}.
	\end{equation}
	
	By substituting the inequality (\ref{eqn:varphi3}) into equation (\ref{eqn:varphi2}) and using that $m-i \leq m$, we obtain the approximation
	\begin{align*}
		\binom{m-1+\binom{m}{d+1}}{d+1} \leq \frac{1}{(d+1)!} \PL_{i=1}^{d+1} \lp m +m^{d+2} \rp,
	\end{align*}
	
	\noindent
	which, after substituting into inequality (\ref{eqn:varphi1}) and multiplying both sides by $d!$, yields the sufficient condition
	\begin{equation}\label{eqn:varphi4}
		(m-1)! > \frac{1}{d+1}\PL_{i=1}^{d+1} \lp m+m^{d+2} \rp.
	\end{equation}
	
	Notice that main term of the right side of inequality (\ref{eqn:varphi4}) is of the form $\PL_{i=1}^{d+1} (a+b) = (a+b)^{d+1}$, hence applying the binomial theorem yields
	\begin{align*}
		\PL_{i=1}^{d+1}\lp m+m^{d+2} \rp &= \SL_{i=0}^{d+1} \binom{d+1}{i} \lp m^{d+2} \rp^i m^{d+1-i},\\
		&= \SL_{i=0}^{d+1} \binom{d+1}{i} m^{di+2i} (m-1)^{d+1-i},\\
		&= \SL_{i=0}^{d+1} \binom{d+1}{i} m^{(d+1)i+d+1},\\
		&= m^{d+1}\SL_{i=0}^{d+1} \binom{d+1}{i} \lp m^{d+1} \rp^i.
	\end{align*}
	
	\noindent
	However, for any positive integer $a$ and $x \geq a$ it follows from induction $\SL_{i=0}^{a+1} x^i \leq 2x^{a+1}.$	Applying this to $(m-1)\SL_{i=0}^{d+1} \binom{d+1}{i} \lp (m-1)^d \rp^i$ and recalling $C_d = \max\lb \binom{d+1}{i} \st 0 \leq i \leq d+1 \rb$ yields
	\begin{equation}\label{eqn:varphi5}
		m^{d+1}\SL_{i=0}^{d+1} \binom{d+1}{i} \lp m^{d+1} \rp^i \leq m^{d+1}C_d\lp 2 \lp m^{d+1} \rp^{d+1} \rp.
	\end{equation}		
	
	\noindent
	Substituting inequality (\ref{eqn:varphi5}) into inequality (\ref{eqn:varphi4}) and simplifying, we obtain the condition
	\begin{align*}
		(m-1)! > \frac{2}{d+1}C_dm^{d^2+3d+2}.
	\end{align*}
	
	\noindent
	Next, we apply Stirling's approximation and re-arrange terms to arrive at the condition
	\begin{equation}\label{eqn:varphi6}
		\frac{ (m-1)^{m-\frac{1}{2}} }{e^{m-1}} > \frac{2C_d}{\sqrt{2\pi}(d+1)}m^{d^2+3d+2}.
	\end{equation}
	
	\noindent
	Observe that $a^{a-1} < (a-1)^{a-\frac{1}{2}}$ for positive integers $a \geq 8$. By requiring $m > 8$, we need only consider when
	\begin{equation}\label{eqn:varphi6'}
		\frac{ m^{m-1} }{e^{m-1}} > \frac{2C_d}{\sqrt{2\pi}(d+1)} m^{d^2+3d+2}.
	\end{equation}
	
	\noindent
	Multiplying both sides of inequality (\ref{eqn:varphi6'}) by $e^{d^2+3d+2}$, dividing both sides by $m^{d^2+3d+2}$, and simplifying, we arrive at the condition
	\begin{equation}\label{eqn:varphi7}
		\lp \frac{m}{e} \rp^{m-d^2-3d-3} > \frac{2C_de^{d^2+3d+2}}{\sqrt{2\pi}(d+1)}.
	\end{equation}
	
	\noindent
	We take $\log$ of both sides of inequality (\ref{eqn:varphi7}), which yields
	\begin{equation}\label{eqn:varphi8}
		\lp m - d^2 - 3d - 3 \rp \log\lp \frac{m}{e} \rp > \log\lp \frac{2C_de^{d^2+3d+3}}{\sqrt{2\pi}(d+1)} \rp.
	\end{equation}
	
	\noindent
	Requiring $m > e^2$, it suffices to consider
	\begin{equation}\label{eqn:varphi9}
		m - d^2 - 3d - 3 > \log\lp \frac{2}{\sqrt{2\pi}} \rp + \log\lp \frac{C_d}{d+1} \rp + \lp d^2 + 3d + 2 \rp.
	\end{equation}
	
	\noindent
	We note $\log\lp \frac{2}{\sqrt{2\pi}} \rp < 0$ and apply the bound on $\log\lp \frac{C_d}{d+1} \rp \leq d+ \frac{3}{2}$ from Lemma \ref{lem:BoundOnLogCd} to obtain the sufficient condition
	\begin{equation*}
		m - d^2 - 3d - 3 > \lp d+\frac{3}{2} \rp + \lp d^2 + 3d + 2 \rp,
	\end{equation*}
	
	\noindent
	which re-arranges to yield our initial supposition
	\begin{equation}\label{eqn:varphi10}
		m \geq 2d^2 + 7d + 6.
	\end{equation}
	
	\noindent
	Finally, note that for all $d \geq 4$, $2d^2 + 7d + 6 > e^2 > 8,$ hence the requirements $m \geq 8$, $m > e^2$ used in the proof are rendered superfluous.
\end{proof} 

\begin{corollary}\label{cor:UpperBoundOnGrowthRateofG} \textbf{(Upper Bound on the Growth Rate of $G$)}\\
	For any $d \geq 4$ and $m \geq 2d^2 + 7d + 6$, it follows that
	\begin{align*}
		G(m) < \frac{(m-1)!}{d!}.
	\end{align*}
\end{corollary}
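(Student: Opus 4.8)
The plan is to read the corollary off from the definition of $G(m)$ together with the quantitative estimates already assembled inside the proof of Lemma~\ref{lem:TheVarphiCondition}. Fix $d \ge 4$ and $m \ge 2d^2+7d+6$. First I would dispose of the side conditions: since $2d^2+7d+6 \ge 66$ for $d \ge 4$, we have $m \ge 15$, so $G(m) = 1 + \min\lb \varphi(d',m-d'-1) \st 4 \le d' \le m-1 \rb$; and since $2d^2+7d+6 - (d+2) = 2d^2+6d+4 > 0$, the index $d' = d+1$ lies in the admissible range. Hence $G(m) \le 1 + \varphi(d+1,m-d-2)$, and it suffices to prove the (integer-strengthened) bound $\varphi(d+1,m-d-2) \le \frac{(m-1)!}{d!} - 2$.

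Next I would bound the two arguments of $\varphi(d+1,m-d-2) = \max\lb \frac{(m-1)!}{(d+1)!},\ \dim\lp \mcm(2,\dotsc,d+1;\vartheta(d+1,m-d-2)) \rp + m \rb$ separately. The first argument equals $\frac{1}{d+1}\cdot\frac{(m-1)!}{d!} \le \frac15\cdot\frac{(m-1)!}{d!}$, and since $\frac{(m-1)!}{d!}$ is an enormous positive integer, this is comfortably at most $\frac{(m-1)!}{d!} - 2$. For the second argument I would apply Corollary~\ref{cor:UpperBoundOnParameterSpaceDimension} with $d$ replaced by $d+1$ (legitimate since $m > d+1 \ge 4$), together with the elementary inequality $\dim \mcm \le \dim \mch$ (compare the formulas of Remark~\ref{rem:DimensionOfParameterAndModuliSpaces}), to obtain
\[
\dim\lp \mcm(2,\dotsc,d+1;\vartheta(d+1,m-d-2)) \rp + m \ \le\ \binom{m-1+\binom{m}{d+1}}{d+1} - \binom{m}{d+1} + 1 .
\]
The crux is then inequality~(\ref{eqn:varphi1}), namely $\frac{(m-1)!}{d!} > \binom{m-1+\binom{m}{d+1}}{d+1}$, which the proof of Lemma~\ref{lem:TheVarphiCondition} establishes exactly for $m \ge 2d^2+7d+6$ via Stirling's approximation and Lemma~\ref{lem:BoundOnLogCd}. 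Since $\binom{m}{d+1} \ge m \ge 66$, this forces the second argument to be at most $\frac{(m-1)!}{d!} - \binom{m}{d+1} + 1 \le \frac{(m-1)!}{d!} - 2$ as well.

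Combining the two bounds gives $\varphi(d+1,m-d-2) \le \frac{(m-1)!}{d!} - 2$, whence $G(m) \le 1 + \varphi(d+1,m-d-2) \le \frac{(m-1)!}{d!} - 1 < \frac{(m-1)!}{d!}$, as required. I do not expect a genuine obstacle here: all the analytic work is carried out in Lemma~\ref{lem:TheVarphiCondition} and Corollary~\ref{cor:UpperBoundOnParameterSpaceDimension}, and the only steps requiring care are (i) shifting the index from $d$ to $d+1$ when invoking those two results, and (ii) retaining a little slack — the factors $\tfrac{1}{d+1}$ and $\binom{m}{d+1}-1$ — in order to upgrade the strict inequality $\varphi(d+1,m-d-2) < \frac{(m-1)!}{d!}$ to the integer bound needed to absorb the leading summand $1$ in the definition of $G(m)$.
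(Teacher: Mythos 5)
Your proof is correct and follows essentially the same route as the paper: the paper's own proof of Corollary \ref{cor:UpperBoundOnGrowthRateofG} is exactly ``$G(m) \leq \varphi(d+1,m-d-2)+1 < \frac{(m-1)!}{d!}$,'' justified by the estimates assembled in the proof of Lemma \ref{lem:TheVarphiCondition} (in particular inequality (\ref{eqn:varphi1})), which is precisely what you extract. Your version is in fact slightly more careful than the paper's two-line argument, since you explicitly track the slack (the factor $\tfrac{1}{d+1}$ and the subtracted $\binom{m}{d+1}-1$) needed to absorb the leading ``$+1$'' in the definition of $G(m)$ and still obtain a strict inequality.
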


\begin{proof}
Recall Definition \ref{def:Definition of G}; in particular, for $m \geq 15$,
\begin{align*}
	G(m) = \min\lb \varphi(d,m-d-1) \st 15 \leq d \leq m-d-1 \rb + 1.
\end{align*}

Consequently, for any $d \geq 4$ and $m \geq 2d^2 + 3d$, we have
	\begin{align*}
		G(m) \leq \varphi(d+1,m-d-2)+1 < \frac{(m-1)!}{d!}
	\end{align*}
	
	\noindent
	from Lemma \ref{lem:TheVarphiCondition}.
\end{proof}

We now prove Theorem \ref{thm:UpperBoundOnGrowthRateofRD(n)}.

\begin{proof} \textbf{(Proof of Theorem \ref{thm:UpperBoundOnGrowthRateofRD(n)})}\\
Fix $d \geq 4$. From Corollary \ref{cor:UpperBoundOnGrowthRateofG}, we have that
\begin{equation*}
	G(m) \leq \frac{(2d^2+7d+5)!}{d!}
\end{equation*}

\noindent
for $m \geq 2d^2+7d+6$. From Theorem \ref{thm:UpperBoundsonRD(n)}, it follows that
\begin{align*}
	\RD(n) \leq n-2d^2-7d-6
\end{align*}

\noindent
for $n \geq \frac{(2d^2+7d+5)!}{d!}$. 
\end{proof}

%%%%%%%%%%%%%%%%%%%%%%%%%%%%%%%%%%%%%%%%%%%%%%%%%%%%%%
%%%%%%%%%%%%%%%%%%%%%%%%%%%%%%%%%%%%%%%%%%%%%%%%%%%%%%
%%%%%%%%%%%%%%%%%%%%%%%%%%%%%%%%%%%%%%%%%%%%%%%%%%%%%%
%%%%%%%%%%%%%%%%%%%%%%%%%%%%%%%%%%%%%%%%%%%%%%%%%%%%%%
%%%%%%%%%%%%%%%%%%%%%%%%%%%%%%%%%%%%%%%%%%%%%%%%%%%%%%
%%%%%%%%%%%%%%%%%%%%%%%%%%%%%%%%%%%%%%%%%%%%%%%%%%%%%%
%%%%%%%%%%%%%%%%%%%%%%%%%%%%%%%%%%%%%%%%%%%%%%%%%%%%%%

\section{Comparison with Prior Bounds}\label{sec:ComparisonWithPriorBounds}

We now give a precise sense of how the bounds from $G(m)$ improve upon the bounds of $F(m)$. 

\begin{theorem}\label{thm:ComparisonWithF}	\textbf{(Comparison With $F(m)$)}\\
	Let $F$ be the function defined in Definition \ref{def:Wolfsons Functions} (which is originally Definition 5.4 of \cite{Wolfson2021}).
	\begin{enumerate}
		\item For every $m \geq 1$, $G(m) \leq F(m)$ with equality if and only if $m=1, 2, 3, 4, 5, 15,$ or $16$.
	\item $G(m)$ provides asymptotic improvements on $F(m)$, in the sense that
	\begin{align*}
		\lim\limits_{m \ra \infty} \frac{F(m)}{G(m)} = \infty.
	\end{align*}
	\end{enumerate}	  	
\end{theorem}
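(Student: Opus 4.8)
The plan is to compare the two bounding functions construction-by-construction. By Definition \ref{def:Wolfsons Functions} (originally Wolfson's Definition 5.4), $F(m)$ is obtained by minimizing, over the degree $d$ of the last Tschirnhaus hypersurface one solves, a threshold $F_d$ assembled from three ingredients: the ambient dimension needed to carry out Wolfson's iterated construction of an $(m-d-1)$-plane on $\tau_{1,\dots,d}$ (a classical linear space on the quadric $\tau_2$, followed by successive applications of Waldron's Theorem \ref{thm:Waldron} to $\tau_3,\dots,\tau_d$); the resolvent degree incurred by the moduli maps $\mcm^\circ(3;\cdot)$, $\mch(i;\cdot)$ occurring in those steps; and the degree $\tfrac{(m-1)!}{d!}$ of the residual intersection $\tau_{1,\dots,m-1}\cap\Lambda$. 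In the same language, $G(m)=1+\min_{4\le d\le m-1}\varphi(d,m-d-1)$ with $\varphi(d,k)=\max\{\tfrac{(d+k)!}{d!},\dim(\mcm(2,\dots,d;\vartheta(d,k)))\}$ is the identical recipe, except that the whole sub-intersection of type $(2,\dots,d)$ is cleared in a single step via Debarre--Manivel (Theorem \ref{thm:DebarreManivel}).

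For Part (1), the range $1\le m\le 14$ is a finite verification: the values of $G$ are tabulated in Definition \ref{def:Definition of G} and Appendix \ref{subsec:Appendix - Explicit Bounds}, those of $F$ are read off from Definition \ref{def:Wolfsons Functions}; for $m\le 5$ both equal the classical values $2,3,4,5,9$, and for $6\le m\le 14$ the polar-cone thresholds of Theorems \ref{thm:The n-6 Bound} and \ref{thm:The n-7,...,n-14 Bounds} driving $G$ strictly undercut Wolfson's. For $m\ge 15$ I would prove that $F_d\ge 1+\varphi(d,m-d-1)$ for each $d$ in the common range, and that any $d$ that $F$ might use outside $[4,m-1]$ is dominated (for such $m$) by the choice $d=4$; minimizing over $d$ then yields $G(m)\le F(m)$. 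Since the residual intersection has degree $\tfrac{(m-1)!}{d!}$ in either construction, the content is to bound the moduli contribution to $F_d$ below by $\dim(\mcm(2,\dots,d;\vartheta(d,m-d-1)))$: unrolling the Waldron chain shows that the plane dimension $k_2$ leaving the quadric step already satisfies $k_2\ge\vartheta(d,m-d-1)$ (doing the degrees $2,\dots,d$ separately can only cost more ambient dimension than the single Debarre--Manivel bound for that intersection), and the cubic step of $F_d$ costs $\dim(\mcm^\circ(3;k_2))$, which — since $\dim(\mcm^\circ(3;r))$ grows like $r^3$ while $\dim(\mcm(2,\dots,d;\vartheta(d,m-d-1)))$ is only polynomial in $m$ — exceeds $\dim(\mcm(2,\dots,d;\vartheta(d,m-d-1)))$. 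Tracking strictness, equality can survive only when $\tfrac{(m-1)!}{d!}$ dominates both sides at a common optimal $d$; a direct check of the explicit formulas shows this happens exactly at $d=4$ for $m=15,16$ (and trivially for $m\le 5$), while for $m\ge 17$ the function $G$ can profitably take $d\ge 5$ — Debarre--Manivel clears degrees $2,\dots,5$ in ambient dimension only $\vartheta(5,m-6)$ — whereas the compounded Waldron cost makes $d\ge 5$ hopeless for $F$, so the inequality is strict.

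For Part (2), I would combine an upper bound on $G$ with a lower bound on $F$. Taking $d=\lfloor\sqrt{m/2}\rfloor$, Corollary \ref{cor:UpperBoundOnGrowthRateofG} gives, for all large $m$,
\begin{align*}
	G(m) < \frac{(m-1)!}{\lfloor\sqrt{m/2}\rfloor!}.
\end{align*}
For the lower bound, in Wolfson's construction each Waldron step multiplies the carried plane dimension by roughly a binomial coefficient in it, so the dimension $k_2$ leaving the quadric step — hence the cubic cost $\dim(\mcm^\circ(3;k_2))$ appearing in $F_d$ — grows like an iterated tower of height $d-2$ in $m$, exceeding $(m-1)!$ once $d\ge d^{*}(m):=2+\lfloor\log_3 m\rfloor$ (roughly). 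Thus $F_d\ge\tfrac{(m-1)!}{d!}\ge\tfrac{(m-1)!}{d^{*}(m)!}$ for $d\le d^{*}(m)$, while $F_d\gg(m-1)!$ for $d>d^{*}(m)$, so in all cases $F(m)\ge\tfrac{(m-1)!}{d^{*}(m)!}$. Dividing,
\begin{align*}
	\frac{F(m)}{G(m)} \ge \frac{\lfloor\sqrt{m/2}\rfloor!}{(2+\lfloor\log_3 m\rfloor)!} \longrightarrow \infty.
\end{align*}

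The main obstacle in both parts is the same: extracting sharp enough estimates from the iterated Waldron recursion defining $F$ — specifically, the lower bound on the carried plane dimension $k_2$ needed so that $\dim(\mcm^\circ(3;k_2))\ge\dim(\mcm(2,\dots,d;\vartheta(d,m-d-1)))$ for every admissible $d$ (Part 1), and the $O(\log m)$ bound on Wolfson's optimal last degree (Part 2). Both are elementary manipulations of binomial coefficients and Stirling bounds, but demand careful bookkeeping, with the small cases $m=15,16,17$ handled by explicit computation to pin down the equality/strictness boundary.
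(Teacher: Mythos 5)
Your overall plan is sound in outline, and your Part (2) is in fact close in spirit to what the paper does (it bounds $G(m) < \frac{(m-1)!}{(d+1)!}$ via Corollary \ref{cor:UpperBoundOnGrowthRateofG} while forcing $F(m) \geq \frac{(m-1)!}{d!}$ via Corollary \ref{cor:TheSimplifiedOmegaCondition}, then lets $d$ grow with $m$), but as written there are genuine gaps. The central one is that every quantitative claim about $F$ is asserted rather than proved, and the one justification you do give is invalid: you argue that the cubic-step cost $\dim\lp\mcm^\circ(3;k_2)\rp$ beats $\dim\lp\mcm(2,\dotsc,d;\vartheta(d,m-d-1))\rp$ because the former ``grows like $r^3$'' while the latter ``is only polynomial in $m$'' --- but for $d=4,5$ both quantities are polynomial in $m$ (of degrees roughly $3(d-1)!$ versus $d(d-1)$), so the comparison is really between $\psi(d,m-d-1)_{d-2}^3$ and $\binom{\vartheta(d,m-d-1)+d+1}{d}$ and requires actual estimates on the iterated Waldron recursion. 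This is precisely the machinery the paper builds: the monotonicity of $\psi(d,m-d-1)_{d-2}$ in $d$ (Lemma \ref{lem:Psi(d,k)d-2NonDecreasing}), the lower bound $\psi(d,m-d-1)_{d-2} \geq \mfc_d(m-d-1)^{(d-2)!}$ (Lemma \ref{lem:ExplicitFormOfOmega}), and the resulting $\Omega$-condition (Proposition \ref{prop:TheOmegaCondition}). The same issue infects your Part (2) lower bound: the claim ``$F_d \gg (m-1)!$ for all $d > d^*(m)$'' is not uniform --- when $d$ is close to $m$ the seed $k=m-d-1$ is small and the ``iterated tower in $m$'' heuristic does not directly apply; the paper disposes of all larger $d$ at once exactly through Lemma \ref{lem:Psi(d,k)d-2NonDecreasing}. (Also, $d=\lfloor\sqrt{m/2}\rfloor$ does not satisfy the hypothesis $m \geq 2d^2+7d+6$ of Corollary \ref{cor:UpperBoundOnGrowthRateofG}; you need a slightly smaller choice of $d$.)

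For Part (1) your route (termwise domination of the minimands for $m \geq 15$, plus an equality analysis at the common minimizer) genuinely differs from the paper, which instead verifies $1 \leq m \leq 59$ by explicit computation (Appendices \ref{subsec:Appendix - Explicit Bounds}, \ref{subsec:Appendix - Explicit Approximations of F(m)/G(m)}), shows $\frac{F(m)}{G(m)} \geq \frac{9!}{6!}$ for $60 \leq m \leq 377$ by comparing $\frac{(m-1)!}{9!}$ with $\dim\lp\mcm(2,\dotsc,9;\vartheta(9,m-10))\rp$ and $\frac{(m-1)!}{6!}$ with $\dim\lp\mcm(3;\psi(7,\cdot)_5)\rp$, and handles $m \geq 378$ by Corollary \ref{cor:Bounding the Ratio F(m)/G(m)} with $d=11$. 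Even granting termwise domination, it only yields $G(m) \leq F(m)$: when $F$'s minimizer is factorial-dominated (as it is for $17 \leq m \leq 58$ at $d=4$), the two minimands agree there, so strictness for every $m \geq 17$ requires exhibiting, for each such $m$, a $d$ at which $\varphi(d,m-d-1)+1$ strictly undercuts a valid lower bound for $F(m)$ --- i.e., it again needs the lower-bound machinery for $F$ (or computation) across the whole middle range, which your proposal only gestures at with ``careful bookkeeping.'' So the skeleton is reasonable, but the proof is not complete without supplying the $\Psi/\Omega$-type estimates and a concrete treatment of the intermediate values of $m$.
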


\begin{remark}
	Despite Theorem \ref{thm:ComparisonWithF}, $G(m)$ does not yield a strictly better bound on $\RD(n)$ for all $n$. As an example,
	\begin{align*}
		G(17) &= 348,489,068,134, &F(17) &= 871,782,912,001,\\
		G(18) &= 2,964,061,900,801 &F(18) &= 14,820,309,504,001,
	\end{align*}		
	so $\frac{F(17)}{G(17)} \sim 2.502$ and $\frac{F(18)}{G(18)} \sim 5.000$. However, for any integer $n$ between $F(17)$ and $G(18)$, $F$ and $G$ yield the same upper bound; namely, $\RD(n) \leq n-17$.
\end{remark}

The remainder of the section is spent proving Theorem \ref{thm:ComparisonWithF}. We will now define Wolfson's function $F(m)$. Our presentation will vary slightly from Wolfson's  and we refer the reader to Section 5 of \cite{Wolfson2021} for details of the construction.

\begin{definition}\label{def:Wolfsons Functions}  \textbf{(Wolfson's Functions)}\\
	Given $(d,k) \in \ZZ_{\geq 3} \times \ZZ_{\geq 1}$, set $\psi(d,k)_0 = k$. For $0 \leq i < d-2$, set
	\begin{align*}
		\psi(d,k)_{i+1} := \psi(d,k)_i + \left\lceil \frac{1}{\psi(d,k)_i+1} \cdot \binom{\psi(d,k)_i+d-i}{d-i} \right\rceil,
	\end{align*}	
	
	\noindent
	along with $\psi(d,k)_{d-1} := 2\psi(d,k)_{d-2}+1$.
	
	Additionally, define
	\begin{align*}
		\Phi(d,k) := \max\lb \frac{(d+k)!}{d!}, \dim\lp \mcm\lp 3;\psi(d,k)_{d-2} \rp \rp +d+k+1 \rb.
	\end{align*}
	
	\noindent
	Finally, for $m \leq 3$, set $F(m):=m+1$ and for $m \geq 4$, set
	\begin{align*}
		F(m) := 2 \left\lfloor \frac{1}{2} \lp \min\limits_{1 \leq d \leq m-2} \Phi(d,m-d-1) \rp \right\rfloor + 1.
	\end{align*}
\end{definition}

\begin{remark}\label{rem:Outline for Section 4}  \textbf{(Outline for Section \ref{sec:ComparisonWithPriorBounds})}\\
	Our goal is to establish a criterion for $m$, in terms of $d$, so that we can conclude $F(m) > \frac{(m-1)!}{d!}$ when this criterion is met. We do this by examining when
	\begin{align*}
		\Phi(d,m-d-1) < \Phi(d+1,m-d-2).
	\end{align*}
	
	\noindent
	We first show that for any $m \geq d+3$,
	\begin{equation}\label{eqn:ss42-1}
		\dim\lp \mcm\lp 3;\psi(d,m-d-1)_{d-2} \rp \rp \leq \dim\lp \mcm\lp 3;\psi(d+1,m-d-2)_{d-1} \rp \rp,
	\end{equation}
	
	\noindent
	which will then leave us to consider when
	\begin{equation}\label{eqn:ss43-2}
		\frac{(m-1)!}{d!} < \dim\lp \mcm\lp 3;\psi(d+1,m-d-2)_{d-1} \rp \rp.
	\end{equation}
\end{remark}

We begin by introducing auxiliary functions which will be useful for proving inequality (\ref{eqn:ss42-1}) holds for $m \geq d+3$.

\begin{definition}\label{def:AuxiliaryFunctionsPsi(d;i)}  \textbf{(Auxiliary Functions $\Psi(d;i)$)}\\
	For any $d \geq 2$, any $d-2 \geq i \geq 1$ and $x \in \ZZ_{\geq 1}$, we set
	\begin{align*}
		\Psi(d;i)(x) = x + \left\lfloor \frac{1}{x+1} \cdot \binom{x+d-i}{d-i} \right\rfloor.
	\end{align*}
\end{definition}

\begin{remark}\label{rem:Key Properties of Psi(d;i)} \textbf{(Key Properties of $\Psi(d;i)$)}\\
	The functions $\Psi(d,i)$ satisfy
	\begin{enumerate}
	\item $\Psi(d;i+1)\lp \psi(d,m-d-1)_{i} \rp = \psi(d,m-d-1)_{i+1}$ for $i \leq d-3$, and
	\item $\Psi(d+1;i+1) = \Psi(d,i)$.
	\end{enumerate}
\end{remark}

\begin{lemma}\label{lem:Psi(d;i)NonDecreasing}  \textbf{($\Psi(d;i)$ are Non-Decreasing)}\\
	For each $m \geq d+1$ with $d \geq 2$ and each $d-2 \geq i \geq 1$, the function $\Psi(d;i)(x)$ is non-decreasing. 
\end{lemma}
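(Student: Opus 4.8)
The plan is to show that $\Psi(d;i)(x)$ is non-decreasing in $x$ by analyzing the difference $\Psi(d;i)(x+1) - \Psi(d;i)(x)$ and showing it is always $\geq 0$. Writing $e := d-i$ (so $e \geq 2$ since $i \leq d-2$), we have $\Psi(d;i)(x) = x + \left\lfloor \frac{1}{x+1}\binom{x+e}{e} \right\rfloor$. First I would isolate the non-trivial part: since the leading term $x$ is manifestly increasing by $1$, it suffices to show that the floor term $\left\lfloor \frac{1}{x+1}\binom{x+e}{e} \right\rfloor$ never decreases by more than $0$ — in fact it suffices to show the real-valued function $g(x) := \frac{1}{x+1}\binom{x+e}{e}$ is non-decreasing for $x \geq 1$, since the floor of a non-decreasing function is non-decreasing.

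Next I would compute the ratio $\frac{g(x+1)}{g(x)}$ directly. We have
\begin{align*}
	\frac{g(x+1)}{g(x)} = \frac{x+1}{x+2} \cdot \frac{\binom{x+1+e}{e}}{\binom{x+e}{e}} = \frac{x+1}{x+2} \cdot \frac{x+1+e}{x+1} = \frac{x+1+e}{x+2}.
\end{align*}
Since $e \geq 2$, we get $\frac{g(x+1)}{g(x)} = \frac{x+1+e}{x+2} \geq \frac{x+3}{x+2} > 1$, so $g$ is strictly increasing for $x \geq 1$; hence $\left\lfloor g(x) \right\rfloor$ is non-decreasing, and therefore $\Psi(d;i)(x) = x + \left\lfloor g(x) \right\rfloor$ is non-decreasing (indeed strictly increasing).

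I do not anticipate a genuine obstacle here — the whole statement reduces to the elementary computation of the ratio $\binom{x+1+e}{e} / \binom{x+e}{e} = \frac{x+1+e}{x+1}$ and the observation $e \geq 2$. The only point requiring a moment's care is the passage from "$g$ non-decreasing" to "$\lfloor g \rfloor$ non-decreasing," which is immediate, and the role of the hypothesis $m \geq d+1$: this hypothesis is not actually needed for monotonicity of $\Psi(d;i)$ as a function of $x$ on $\ZZ_{\geq 1}$, but it is stated because $\Psi(d;i)$ is only applied at arguments of the form $\psi(d,m-d-1)_j$, which are positive integers precisely when $m \geq d+1$ (ensuring $k = m-d-1 \geq 0$); I would remark on this so the reader sees why the hypothesis appears. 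If one instead wants to track the values actually arising, one notes $\psi(d,m-d-1)_0 = m-d-1 \geq 1$ when $m \geq d+2$ and the case $m = d+1$ is handled trivially since then the relevant planes are points.
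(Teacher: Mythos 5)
Your proof is correct and takes essentially the same route as the paper: both arguments reduce to showing that the real-valued function $g(x) = \frac{1}{x+1}\binom{x+d-i}{d-i}$ is increasing, using precisely the hypothesis $d-i \geq 2$. The only cosmetic difference is that you handle the floors by noting that $\lfloor \cdot \rfloor$ preserves monotonicity and verify $g(x+1)/g(x) = \frac{x+1+d-i}{x+2} > 1$, whereas the paper absorbs the floor error into the $+1$ coming from the leading term and bounds the difference $g(x+1)-g(x)$ directly; your closing remark about the hypothesis $m \geq d+1$ being vacuous for the monotonicity claim itself is also accurate.
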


\begin{proof}
	First, observe that
	\begin{align*}
		\Psi(d;i)(x+1) - \Psi(d;i)(x) = 1 + \left\lfloor \frac{1}{x+2} \cdot \binom{x+1+d-i}{d-i} \right\rfloor - \left\lfloor \frac{1}{x+1} \cdot \binom{x+d-i}{d-i} \right\rfloor,
	\end{align*}
	
	\noindent
	thus
	\begin{align*}
	\Psi(d;i)(x+1) - \Psi(d;i)(x) \geq \frac{1}{x+2} \cdot \binom{x+1+d-i}{d-i} - \frac{1}{x+1} \cdot \binom{x+d-i}{d-i}.
	\end{align*}	
	
	Now, observe that
	\begin{align*}
		\frac{1}{x+2} \cdot \binom{x+1+d-i}{d-i} &= \frac{(x+1+d-i)!}{(x+2)(d-i)!(x+1)!} = \frac{(x+d-i)!}{(d-i)!(x+1)!} \lp \frac{x+d-i+1}{x+2} \rp,
	\end{align*}
	
	\noindent
	and
	\begin{align*}
		\frac{1}{x+1} \cdot \binom{x+d-i}{d-i} &= \frac{(x+d-i)!}{(x+1)(d-i)!x!} = \frac{(x+d-i)!}{(d-i)!(x+1)!}.
	\end{align*}
	
	\noindent
	Hence,
	\begin{align*}
	\Psi(d;i)(x+1) - \Psi(d;i)(x) \geq \frac{(x+d-i)!}{(d-i)!(x+1)!} \lp \frac{x+d-i+1}{x+2} - 1 \rp,
	\end{align*}
	
	\noindent
	and the right side is positive when $i \leq d-2$.
\end{proof}

\begin{lemma}\label{lem:Psi(d,k)d-2NonDecreasing} \textbf{($\psi(d,m-d-1)_{d-2}$ is Non-Decreasing in $d$)}\\
	Fix $m \geq 4$. Then,
	\begin{align*}
		\psi(2,m-3)_0 \leq \psi(3,m-4)_1 \leq \cdots \leq \psi(m-3,2)_{m-5} \leq \psi(m-2,1)_{m-4}.
	\end{align*}
	
	\noindent
	Furthermore, for each $2 \leq d \leq m-3$,
	\begin{align*}
		\dim\lp \mcm\lp 3;\psi(d,m-d-1)_{d-2} \rp \rp \leq \dim\lp \mcm\lp 3;\psi(d+1,m-d-2)_{d-1} \rp \rp.
	\end{align*}
\end{lemma}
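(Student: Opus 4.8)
The plan is to deduce both assertions from the single inequality
\begin{equation*}
	\psi(d,m-d-1)_{d-2} \le \psi(d+1,m-d-2)_{d-1} \qquad (2 \le d \le m-3),
\end{equation*}
which I will call $(\star)$. Indeed, consecutive terms of the asserted chain are exactly $\psi(d,m-d-1)_{d-2}$ and $\psi(d+1,m-(d+1)-1)_{(d+1)-2} = \psi(d+1,m-d-2)_{d-1}$, so $(\star)$ read over all admissible $d$ \emph{is} the chain; and the ``furthermore'' part will follow from $(\star)$ together with the fact that $r \mapsto \dim\lp \mcm(3;r) \rp$ is non-decreasing.

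To prove $(\star)$, fix such a $d$ and abbreviate $a_j := \psi(d,m-d-1)_j$ for $0 \le j \le d-2$ and $b_j := \psi(d+1,m-d-2)_j$ for $0 \le j \le d-1$; the target is $a_{d-2} \le b_{d-1}$. I would prove, by induction on $j$, the stronger statement that $a_j \le b_{j+1}$ for every $0 \le j \le d-2$. For $j=0$ this is immediate: $a_0 = m-d-1 = b_0+1$, and since $b_0 = m-d-2 \ge 1$ (this is where $d \le m-3$ is used) and $\binom{b_0+d+1}{d+1} \ge 1$, one step of the $\psi$-recursion gives $b_1 = b_0 + \left\lceil \frac{1}{b_0+1}\binom{b_0+d+1}{d+1} \right\rceil \ge b_0+1 = a_0$. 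For the inductive step, suppose $a_j \le b_{j+1}$ with $j \le d-3$. By Remark \ref{rem:Key Properties of Psi(d;i)}(1) we have $a_{j+1} = \Psi(d;j+1)(a_j)$; applying the same remark with $d$ replaced by $d+1$ (legitimate, since $j+1 \le d-2 = (d+1)-3$) gives $b_{j+2} = \Psi(d+1;j+2)(b_{j+1})$; and Remark \ref{rem:Key Properties of Psi(d;i)}(2) identifies $\Psi(d+1;j+2) = \Psi(d;j+1)$. As $1 \le j+1 \le d-2$, Lemma \ref{lem:Psi(d;i)NonDecreasing} says $\Psi(d;j+1)$ is non-decreasing, whence $a_{j+1} = \Psi(d;j+1)(a_j) \le \Psi(d;j+1)(b_{j+1}) = b_{j+2}$. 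Taking $j = d-2$ yields $(\star)$.

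For the ``furthermore'' part I would use Remark \ref{rem:DimensionOfParameterAndModuliSpaces}: for every integer $r \ge 1$, $\binom{r+3}{3} - (r+1)^2 = \binom{r+1}{3} \ge 0$, so $\mcm(3;r)$ is non-empty with $\dim\lp \mcm(3;r) \rp = \binom{r+1}{3}$, and $\binom{r+1}{3}$ is non-decreasing in $r$. Since every $\psi$-value occurring is a positive integer, applying this to the two sides of $(\star)$ gives $\dim\lp \mcm(3;\psi(d,m-d-1)_{d-2}) \rp \le \dim\lp \mcm(3;\psi(d+1,m-d-2)_{d-1}) \rp$ for each $2 \le d \le m-3$, which is the claim.

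I expect the only real difficulty to be the index bookkeeping in the inductive step: one must verify that every use of Remark \ref{rem:Key Properties of Psi(d;i)}(1) stays inside its stated range (before and after the shift $d \to d+1$), that $\Psi(d+1;j+2) = \Psi(d;j+1)$ is invoked with matching arguments, and that $\Psi(d;j+1)$ is declared non-decreasing only for $1 \le j+1 \le d-2$, as Lemma \ref{lem:Psi(d;i)NonDecreasing} requires. Inspecting the boundary cases $d=2$ (no inductive step, so $(\star)$ reduces to the base case) and $d=m-3$ (where $b_0=1$) confirms no range degenerates; the identity $\binom{r+3}{3}-(r+1)^2 = \binom{r+1}{3}$ is a one-line check.
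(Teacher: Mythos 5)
Your proof is correct and follows essentially the same route as the paper's: the paper likewise invokes Remark \ref{rem:Key Properties of Psi(d;i)} and Lemma \ref{lem:Psi(d;i)NonDecreasing} to reduce the whole chain to the single base-case inequality $\psi(d+1,m-d-2)_1 \geq \psi(d,m-d-1)_0$, which it then verifies directly; you have merely written out the induction that the paper leaves implicit, and supplied the (omitted but routine) monotonicity of $r \mapsto \dim\lp \mcm(3;r) \rp$ for the ``furthermore'' clause.
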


\begin{proof}
	In light of Remark \ref{rem:Key Properties of Psi(d;i)} and Lemma \ref{lem:Psi(d;i)NonDecreasing}, it suffices to show
	\begin{align*}
		\psi(d+1,m-d-2)_1 \geq \psi(d,m-d-1)_0
	\end{align*}
	
	\noindent
	to prove the claim. However,
	\begin{align*}
		\psi(d+1,m-d-2)_1 &= m-d-2 + \left\lceil \frac{1}{m-d-1} \cdot \binom{m-2}{d-1} \right\rceil 
		\geq m-d-1 
		= \psi(d,m-d-1)_0.
	\end{align*}
\end{proof}

\begin{remark}
	Having proved Lemma \ref{lem:Psi(d,k)d-2NonDecreasing}, we now begin to work towards the second task outlined in Remark \ref{rem:Outline for Section 4}: establishing a criterion for $m$ in terms of $d$, which, when met, implies
	\begin{align*}
		\frac{(m-1)!}{d!} \leq \dim\lp \mcm\lp 3;\psi(d+1,m-d-2)_{d-1} \rp \rp.
	\end{align*}
	
	Next, we establish simple functions which we use to approximate $\psi(d,m-d-1)_{d-2}$ from below.
\end{remark}

\begin{definition}\label{def:AuxiliaryFunctionsVarrhoAndRho(d,i)} \textbf{(Auxiliary Functions $\Omega$ and $\omega(d,i)$)}\\
	For each $d \geq 4$ and $d-3 \geq i \geq 1$, define $\omega(d,i):\RR_{>0} \ra \RR_{>0}$ by
	\begin{align*}
		\omega(d,i)(x) = \frac{1}{(d-i)!}x^{d-i-1}.
	\end{align*}
	
	\noindent
	Similarly, for each pair $m \geq d$ with $d \geq 4$, define the function $\Omega$ by
	\begin{align*}
		\Omega(d,m) = \lp \omega(d,d-3) \circ \cdots \circ \omega(d,1) \rp (m-d-1).
	\end{align*}
\end{definition}

\begin{remark}\label{rem:Bounding Properties of Omega and omega} \textbf{(Bounding Properties of $\Omega$ and $\omega(d,i)$)}\\
	Observe that for each $i$,
	\begin{align*}
		\omega(d,i+1)(\psi(d,m-d-1)_i) \leq \psi(d,m-d-1)_{i+1}.
	\end{align*}
	
	\noindent
	In particular,
	\begin{align*}
		\Omega(d,m) \leq \psi(d,m-d-1)_{d-2}.
	\end{align*}
\end{remark}

\begin{example}
	Consider the cases when $d=5$ and $m=10, 100$. Then,
	\begin{align*}
		\Omega(5,10) &\sim 1.185, &\Omega(5,100) &\sim 1.996 \times 10^8,\\
		\psi(5,10)_3 &= 133, &\psi(5,100)_3 &\sim 3.633 \times 10^8.
	\end{align*}
\end{example}

\begin{lemma}\label{lem:ExplicitFormOfOmega} \textbf{(Explicit Form of $\Omega$)}\\
	For any $d \geq 4$, set
	\begin{equation*}
		\mfc_d := \PL_{i=3}^{d-1} \frac{1}{(i!)^{(i-2)!}}. 
	\end{equation*}
	
	\noindent
	For all $m \geq d+2$,
	\begin{equation*}
		\Omega(d,m) = \mfc_d (m-d-1)^{(d-2)!},
	\end{equation*}	
\end{lemma}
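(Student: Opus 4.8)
The plan is to establish the closed form for $\Omega(d,m)$ by induction on $d \geq 4$, unwinding the definition $\Omega(d,m) = (\omega(d,d-3) \circ \cdots \circ \omega(d,1))(m-d-1)$ as an iterated composition of power maps. Recall from Definition \ref{def:AuxiliaryFunctionsVarrhoAndRho(d,i)} that $\omega(d,i)(x) = \frac{1}{(d-i)!}x^{d-i-1}$. The key observation is that composing maps of the form $x \mapsto c\,x^e$ is entirely bookkeeping: the exponents multiply and the constants get raised to the appropriate accumulated exponent. So the strategy is to track the running exponent and the running constant as we compose $\omega(d,1)$, then $\omega(d,2)$, \ldots, up to $\omega(d,d-3)$.

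First I would compute the accumulated exponent. After applying $\omega(d,1)$ we have exponent $d-2$; after also applying $\omega(d,2)$ the exponent becomes $(d-2)(d-3)$; and in general after applying $\omega(d,i)$ (the $i$-th map in the composition, corresponding to raising to the power $d-i-1$) the running exponent is $\prod_{j=1}^{i}(d-j-1) = \prod_{\ell=d-i-1}^{d-2}\ell$. Carrying this through the last map $\omega(d,d-3)$, which raises to the power $d-(d-3)-1 = 2$, gives total exponent $\prod_{j=1}^{d-3}(d-j-1) = \prod_{\ell=2}^{d-2}\ell = (d-2)!$. That explains the $(m-d-1)^{(d-2)!}$ factor. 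Second, I would track the constant. Each $\omega(d,i)$ contributes a factor $\frac{1}{(d-i)!}$, but this factor is then raised to the product of all the exponents coming from the maps applied \emph{after} $\omega(d,i)$ in the composition, namely $\prod_{j=i+1}^{d-3}(d-j-1)$. A short index computation shows $\prod_{j=i+1}^{d-3}(d-j-1) = (d-i-2)!$, so the map $\omega(d,i)$ contributes $\big(\tfrac{1}{(d-i)!}\big)^{(d-i-2)!}$ to the overall constant; re-indexing via $\iota = d-i$ (so $\iota$ runs from $3$ to $d-1$ as $i$ runs from $d-3$ down to $1$) turns the product of these contributions into exactly $\mathfrak{c}_d = \prod_{i=3}^{d-1}\frac{1}{(i!)^{(i-2)!}}$.

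The cleanest way to organize this is an explicit induction on $d$: the base case $d=4$ is just $\Omega(4,m) = \omega(4,1)(m-5) = \frac{1}{3!}(m-5)^2$, matching $\mathfrak{c}_4 = \frac{1}{(3!)^{1!}} = \frac{1}{6}$ and $(d-2)! = 2$. For the inductive step, I would use the identity $\Psi(d+1;i+1) = \Psi(d,i)$ from Remark \ref{rem:Key Properties of Psi(d;i)}, or rather its $\omega$-analogue $\omega(d+1,i+1) = \omega(d,i)$ which is immediate from the formula, to relate $\Omega(d+1,m)$ to $\Omega(d,\cdot)$ composed with one extra outer map $\omega(d+1,1)(x) = \frac{1}{d!}x^{d-1}$ — careful, the extra map is the \emph{outermost} one, applied last, so $\Omega(d+1,m) = \omega(d+1,d-2)\big(\cdots \omega(d+1,1)(m-d-2)\big)$ and it is the \emph{innermost} map $\omega(d+1,1)$ that is new while the outer ones are $\omega(d+1,i+1) = \omega(d,i)$. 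Either way the recursion propagates the constant and exponent correctly, and the arithmetic $(d-1)! = (d-2)!\cdot(d-1)$ and $\mathfrak{c}_{d+1} = \mathfrak{c}_d \cdot (d!)^{-(d-2)!}$ close the induction.

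I expect the main obstacle to be purely notational: getting the indexing of the composition right — in particular, being careful about which $\omega(d,i)$ is applied first versus last, since the constant contributed by an early map gets amplified by all subsequent exponents while a late map's constant is not amplified at all. A sign or off-by-one error in ``$d-i-1$'' versus ``$d-i$'' would propagate into both $\mathfrak{c}_d$ and the final exponent, so I would double-check against the explicit example in the text ($\Omega(5,10) \sim 1.185$, $\Omega(5,100) \sim 1.996 \times 10^8$): with $d=5$ the formula predicts $\mathfrak{c}_5 = \frac{1}{(3!)^{1!}} \cdot \frac{1}{(4!)^{2!}} = \frac{1}{6 \cdot 576} = \frac{1}{3456}$ and exponent $(d-2)! = 6$, giving $\Omega(5,m) = \frac{(m-6)^6}{3456}$, and indeed $\frac{4^6}{3456} \approx 1.185$ and $\frac{94^6}{3456} \approx 1.996 \times 10^8$, confirming the bookkeeping. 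Once the indexing is pinned down the proof is a one-paragraph induction.
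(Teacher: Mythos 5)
Your proposal is correct and follows essentially the same route as the paper: an induction on $d$ with base case $d=4$, using $\omega(d+1,i+1)=\omega(d,i)$ to peel off the innermost map $\omega(d+1,1)$ and then closing with $(d-1)!=(d-1)\cdot(d-2)!$ and $\mfc_{d+1}=\mfc_d\,(d!)^{-(d-2)!}$. Your explicit exponent/constant bookkeeping and the numerical check against $\Omega(5,10)$ and $\Omega(5,100)$ are consistent with the paper's argument, so nothing further is needed.
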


\begin{proof}
	We proceed by induction on $d$. When $d=4$, we have
	\begin{align*}
		\Omega(4,m) &= \rho(4,1)(m-5) = \frac{1}{3!}(m-5)^2 = \mfc_2(m-5)^{2!}.
	\end{align*}
	
	\noindent
	For arbitrary $d$, recall
	\begin{align*}
		\Omega(d,m) &= \lp \omega(d,d-3) \circ \cdots \circ \omega(d,2) \circ \omega(d,1) \rp (m-d-1),
	\end{align*}
	
	\noindent
	and
	\begin{align*}
		\Omega(d-1,m-1) &= \lp \omega(d-1,d-4) \circ \cdots \circ \omega(d-1,2) \circ \omega(d-1,1) \rp (m-d-1).
	\end{align*}
	
	\noindent
	By definition, however, $\omega(d+1,i+1) = \omega(d,i)$. Therefore,
	\begin{align*}
	\Omega(d,m) &= \lp \omega(d,d-3) \circ \cdots \circ \omega(d,2) \circ \omega(d,1) \rp (m-d-1),\\
	&= \lp \omega(d,d-3) \circ \cdots \circ \omega(d,2) \rp \lp \omega(d,1)(m-d-1) \rp,\\
	&= \lp \omega(d-1,d-4) \circ \cdots \circ \omega(d-1,1) \rp \lp \omega(d,1)(m-d-1) \rp,\\
	&= \Omega(d-1, \omega(d,1)(m-d-1)+d+1).
	\end{align*}
	
	\noindent
	By induction, we know that
	\begin{align*}
		\Omega(d-1, \omega(d,1)(m-d-1)+d+1) &= \mfc_{d-1} \lp  \omega(d,1)(m-d-1) \rp^{(d-3)!},\\
		&= \mfc_{d-1} \lp \frac{1}{(d-1)!}(m-d-1)^{d-2} \rp^{(d-3)!},\\
		&= \mfc_{d-1} \lp \frac{1}{(d-1)!} \rp ^{(d-3)!} \lp (m-d-1)^{d-2} \rp^{(d-3)!},\\
		&= \mfc_d(m-d-1)^{(d-2)!}.
	\end{align*}
	
	\noindent
	Consequently,
	\begin{equation*}
	\Omega(d,m) = \Omega(d-1, \omega(d,1)(m-d-1)+d+1) = \mfc_d(m-d-1)^{(d-2)!}.
	\end{equation*}
\end{proof}

In the following lemma, we prove an inequality that will be useful in the proof of Proposition \ref{prop:TheOmegaCondition}, the proposition which establishes the criterion we seek.

\begin{lemma}\label{lem:BoundingMfcd} \textbf{(Bounding $\log\lp \mfc_d \rp$)}\\
	For each $d \geq 4$,
	\begin{equation*}
		\log\lp \mfc_d \rp \geq 2(d-2)! - 2(d-2)!\log(d-1) - 2(d-3)!\log(d-1).
	\end{equation*}
\end{lemma}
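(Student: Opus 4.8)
\textbf{The plan} is to unwind the definition of $\mfc_d = \PL_{i=3}^{d-1} \frac{1}{(i!)^{(i-2)!}}$, take logarithms to convert the product into a sum, and bound that sum from below using Stirling's approximation (\ref{eqn:Stirling's Approximation}). Taking $\log$ of both sides, the claim becomes
\begin{equation*}
	-\SL_{i=3}^{d-1} (i-2)! \log(i!) \geq 2(d-2)! - 2(d-2)!\log(d-1) - 2(d-3)!\log(d-1),
\end{equation*}
i.e. an upper bound on $\SL_{i=3}^{d-1}(i-2)!\log(i!)$. First I would apply the upper Stirling bound $i! \leq i^{i+\frac{1}{2}}e^{1-i}$, so $\log(i!) \leq (i+\tfrac12)\log i + 1 - i \leq (i+1)\log i$ for $i \geq 3$ (using $1 \leq \tfrac12\log i$ there), giving $\SL_{i=3}^{d-1}(i-2)!\log(i!) \leq \SL_{i=3}^{d-1}(i-2)!\,(i+1)\log i$.

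The key observation is that the summand $(i-2)!(i+1)\log i$ is dominated by its last term: since $(i-2)!(i+1) \leq (d-3)!\cdot d$ and $\log i \leq \log(d-1)$ for $i \leq d-1$, while there are only $d-3$ terms, a crude bound gives $\SL_{i=3}^{d-1}(i-2)!(i+1)\log i \leq (d-3)(d-3)!\,d\log(d-1)$. I would then compare this against the target, which after negation reads $\SL_{i=3}^{d-1}(i-2)!\log(i!) \leq 2(d-2)!\log(d-1) + 2(d-3)!\log(d-1) - 2(d-2)!$. Since $(d-2)! = (d-2)(d-3)!$, the right side is $\big(2(d-2) + 2\big)(d-3)!\log(d-1) - 2(d-2)! = 2(d-1)(d-3)!\log(d-1) - 2(d-2)!$; I would verify that $(d-3)(d-3)!\,d\log(d-1) \leq 2(d-1)(d-3)!\log(d-1) - 2(d-2)!$ fails for large $d$, so the crude bound is too lossy and a sharper split of the sum is needed — isolating the $i=d-1$ term, which contributes $(d-3)!\log((d-1)!) \leq (d-3)!\big((d-1)\log(d-1) + 1 - (d-1)\big) = (d-3)!(d-1)\log(d-1) - (d-2)(d-3)! = (d-1)!\cdot\frac{\log(d-1)}{d-2}$-type expression, and bounding the remaining lower-order tail $\SL_{i=3}^{d-2}(i-2)!\log(i!)$ separately as a geometrically small correction.

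\textbf{The main obstacle} will be getting the constants to line up: the statement has a specific coefficient $2$ on each of the three terms, and the dominant contribution $(d-3)!\log((d-1)!)$ from the $i=d-1$ summand, once expanded via Stirling, must be shown to be at most $2(d-2)!\log(d-1) + 2(d-3)!\log(d-1) - 2(d-2)!$ after the tail $\SL_{i=3}^{d-2}(i-2)!\log(i!)$ is absorbed. Concretely I would show the tail is bounded by, say, $(d-4)!(d-2)\log(d-1) \leq (d-3)!\log(d-1)$ (using that $(i-2)!\log(i!)$ roughly doubles each step so the sum is at most twice the last tail term, and that last tail term $(d-4)!\log((d-2)!)$ is a factor of order $d$ smaller than $(d-3)!\log((d-1)!)$), then check the resulting inequality $(d-3)!\log((d-1)!) + (d-3)!\log(d-1) \leq 2(d-2)!\log(d-1) + 2(d-3)!\log(d-1) - 2(d-2)!$ term by term; this reduces to $(d-3)!\log((d-1)!) \leq (2d-3)(d-3)!\log(d-1) - 2(d-2)!$, and substituting $\log((d-1)!) \leq (d-1)\log(d-1) - (d-2)$ makes the left side $(d-1)!\,\frac{\log(d-1)}{d-2} - (d-2)!$, so it remains to confirm $(d-1)(d-3)!\log(d-1) - (d-2)! \leq (2d-3)(d-3)!\log(d-1) - 2(d-2)!$, i.e. $(d-2)! \leq (d-2)(d-3)!\log(d-1)$, which holds since $\log(d-1) \geq 1$ for $d \geq 4$. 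Assembling these pieces and double-checking the base case $d=4$ (where $\mfc_4 = \frac{1}{3!}$, $\log\mfc_4 = -\log 6 \approx -1.79$ against the right side $2\cdot 2! - 2\cdot 2!\log 3 - 2\cdot 1!\log 3 = 4 - 6\log 3 \approx -2.59$) completes the argument.
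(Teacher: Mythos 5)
Your overall strategy --- take logs, apply Stirling, and exploit the fact that the factorially-weighted sum $\SL_{i=3}^{d-1}(i-2)!\log(i!)$ is dominated by its last term --- is close in spirit to the paper's proof, which applies Stirling to every summand uniformly, reindexes, bounds each $\log(j+2)$ by $\log(d-1)$, and finishes with $\SL_{j=1}^{a}j!\leq 2(a!)$. The difference is that you isolate the $i=d-1$ term and treat $\SL_{i=3}^{d-2}(i-2)!\log(i!)$ as a small correction, and it is exactly this tail estimate that does not go through as written.

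You need the tail to be at most $(d-3)!\log(d-1)$, and your justification is the chain ``tail $\leq$ twice its last term $\leq (d-4)!(d-2)\log(d-1)\leq (d-3)!\log(d-1)$.'' The final displayed inequality is false: $(d-4)!(d-2)=(d-3)!\cdot\frac{d-2}{d-3}>(d-3)!$. More seriously, the route cannot be repaired at the level of crudeness you allow yourself in the tail: using $\log(i!)\leq (i+1)\log i$, twice the last tail term is $2(d-4)!(d-1)\log(d-2)\approx 2\cdot\frac{d-1}{d-3}(d-3)!\log(d-1)$, about double your budget, while the true tail lies below $(d-3)!\log(d-1)$ only by a multiplicative factor of roughly $1-\frac{1}{\log(d-2)}$ (its leading term is $(d-4)!\log((d-2)!)\approx (d-4)!(d-2)\lp\log(d-2)-1\rp$). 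So proving the tail bound forces you to keep the $-i$ coming from the $e^{1-i}$ in Stirling for the tail summands rather than discarding it; this is precisely what the paper's uniform treatment does, and those terms are the source of the positive $2(d-2)!$ in the statement. Your decomposition is salvageable --- the main-term comparison you carry out correctly (it does reduce to $1\leq\log(d-1)$) leaves slack $(d-2)!\lp\log(d-1)-1\rp$, which for large $d$ exceeds the tail by a factor of order $d$, so a correct tail bound with a worse constant would still close the argument after checking a few small $d$ --- but as proposed the tail step fails. A minor further point, shared with the paper: you quote Stirling correctly but then use $\log((d-1)!)\leq(d-1)\log(d-1)+1-(d-1)$, i.e.\ $a!\leq a^{a}e^{1-a}$, which is false; restoring the $\tfrac{1}{2}\log(d-1)$ costs $\tfrac{1}{2}(d-3)!\log(d-1)$, which the same slack absorbs for $d\geq 5$, with $d=4$ checked directly as you do.
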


\begin{proof}
	Observe
	\begin{equation*}
	\log\lp \mfc_d \rp = -\SL_{i=3}^{d-1} (i-2)!\log(i!) \geq - \SL_{i=3}^{d-1} (i-2)!\log\lp e^{1-i}i^i \rp,
	\end{equation*}
	
	\noindent
	where the approximation is due to Stirling's approximation (equation \ref{eqn:Stirling's Approximation}). Hence,
	\begin{align*}
		\log\lp \mfc_d \rp &\geq - \SL_{i=3}^{d-1} (i-2)!\lp 1-i + i\log(i) \rp,\\
		&= \SL_{i=3}^{d-1} (i-1)! - \SL_{i=3}^{d-1} (i-2)!(i)\log(i),\\
		&= \SL_{j=1}^{d-3} (j+1)! - \SL_{j=1}^{d-3} j!(j+2)\log(j+2),\\
		&= \SL_{j=1}^{d-3} (j+1)! - \SL_{j=1}^{d-3} j!(j+1)\log(j+2) - \SL_{j=1}^{d-3} j!\log(j+2),\\
		&\geq \SL_{j=1}^{d-3} (j+1)! - \log(d-1)\SL_{j=1}^{d-3} (j+1)! - \log(d-1)SL_{j=1}^{d-3} j!,\\
		&= (1-\log(d-1)) \SL_{j=1}^{d-3} (j+1)! - \log(d-1)\SL_{j=1}^{d-3} j!. 
	\end{align*}
	
	\noindent
	Recall that for any positive integer $a$, $\SL_{i=1}^a i! \leq 2(a!)$. Since $d \geq 4$, it follows that $1-\log(d-1)$ and $-\log(d-1)$ are negative, hence
	\begin{align*}
		\log\lp \mfc_d \rp &\geq 2(1-\log(d-1))(d-2)! - 2\log(d-1)(d-3)!,\\
		&= 2(d-2)! - 2(d-2)!\log(d-1) - 2(d-3)!\log(d-1).
	\end{align*}
\end{proof}

\begin{prop}\label{prop:TheOmegaCondition} \textbf{(The $\Omega$ Condition)}\\
	Fix $d \geq 6$. For any $m \geq d^2-d+4$ such that
	\begin{align*}
		m^2 - \frac{5}{2}m + \frac{1}{2} < (d+1) + \log(d)\lp d+ \frac{1}{2} \rp + 6(d-3)!(d-2-\log(d-1)),
	\end{align*}
	
	\noindent
	it follows that
	\begin{align*}
		\Phi(d,m-d-1) < \Phi(d+1,m-d-2).
	\end{align*}
\end{prop}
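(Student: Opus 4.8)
The plan is to mirror the structure of the proof of Lemma \ref{lem:TheVarphiCondition}, but now using the functions $\Phi$, $\psi$, $\Omega$, and $\mfc_d$ in place of $\varphi$, $\vartheta$, and $C_d$. As in Remark \ref{rem:Outline for Section 4}, the inequality $\Phi(d,m-d-1) < \Phi(d+1,m-d-2)$ will follow from two facts: first, that $\frac{(m-1)!}{(d+1)!} < \frac{(m-1)!}{d!}$ always, which handles the factorial term in $\Phi$; and second, from Lemma \ref{lem:Psi(d,k)d-2NonDecreasing}, that $\dim(\mcm(3;\psi(d,m-d-1)_{d-2})) \leq \dim(\mcm(3;\psi(d+1,m-d-2)_{d-1}))$, so that after adding the linear correction terms $d+k+1$ it suffices to show
\begin{align*}
	\frac{(m-1)!}{d!} > \dim\lp \mcm\lp 3;\psi(d+1,m-d-2)_{d-1} \rp \rp + (m).
\end{align*}
Since $\dim(\mcm(3;r)) \leq \dim(\mch(3;r)) = \binom{r+3}{3}-1$, it is enough to bound $\binom{\psi(d+1,m-d-2)_{d-1}+3}{3}$ from above, and for this I need an \emph{upper} bound on $\psi(d+1,m-d-2)_{d-1}$ (the dimension is increasing in $r$).

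First I would unwind $\psi(d+1,m-d-2)_{d-1} = 2\psi(d+1,m-d-2)_{d-2}+1$ and then bound $\psi(d+1,m-d-2)_{d-2}$ from above by iterating the defining recursion, producing a tower of compositions analogous to $\Omega$ but with ceilings and the extra $+1/(x+1)$-type slack; this gives something of the form $\mfc_{d+1}^{-1}$-flavored but with a polynomial-in-$m$ leading behavior, concretely a bound roughly like $\psi(d+1,m-d-2)_{d-2} \leq (\text{const depending on }d)\cdot m^{(d-1)!}$ up to lower-order terms. Cubing this (for the $\binom{\cdot+3}{3}$) and comparing against $(m-1)!$ via Stirling's approximation (\ref{eqn:Stirling's Approximation}), then taking logarithms, reduces the claim to an inequality of the shape
\begin{align*}
	(m - \tfrac{1}{2})\log(m-1) - (m-1) > 3(d-1)!\log m + \log(\text{const}_d) + O(\log m),
\end{align*}
which after dividing through by $\log m$ (valid once $m$ is past a small absolute threshold, absorbed into $m \geq d^2-d+4$) becomes a linear-in-$m$ inequality with right-hand side governed by $\log(\mfc_{d}^{\pm 1})$. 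At that point I invoke Lemma \ref{lem:BoundingMfcd} to replace $\log(\mfc_d)$ by its explicit lower bound $2(d-2)! - 2(d-2)!\log(d-1) - 2(d-3)!\log(d-1)$, and rearrange; the stated hypothesis $m^2 - \frac{5}{2}m + \frac{1}{2} < (d+1) + \log(d)(d+\frac12) + 6(d-3)!(d-2-\log(d-1))$ is precisely what survives this rearrangement (the $m^2$ on the left and the $6(d-3)!$ on the right are the fingerprints of the cube and of the $2(d-2)! = 2(d-1)(d-3)!$, $2(d-3)!$ terms from Lemma \ref{lem:BoundingMfcd}).

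The main obstacle I expect is bookkeeping the tower of bounds for $\psi(d+1,m-d-2)_{d-2}$: unlike $\Omega$, which is a clean composition of monomials with known closed form (Lemma \ref{lem:ExplicitFormOfOmega}), the upper bound on $\psi$ involves ceilings at every stage, and I must check that the accumulated additive slack (one unit per iteration, times at most $d$ iterations, plus the $m - i$ vs $m$ replacements) is genuinely dominated by the leading term so that it can be folded into the constants without disturbing the final inequality. A secondary subtlety is the factor of $2$ from $\psi(d+1,m-d-2)_{d-1} = 2\psi(d+1,m-d-2)_{d-2}+1$ and, when cubed, the resulting factor of $8$; keeping track of whether this $8$ (rather than a $6$ or a $2$) lands correctly against the $6(d-3)!$ coefficient in the hypothesis requires care, and I would double-check it against the $d=6$ base case since the statement restricts to $d \geq 6$ (larger than the $d\geq 4$ of Lemma \ref{lem:TheVarphiCondition}), which suggests a genuine loss somewhere in the Stirling step that only closes once $(d-3)!$ is large enough. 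The requirement $m \geq d^2-d+4$ will be used exactly as in Lemma \ref{lem:TheVarphiCondition} to discard the small-$m$ exceptions from Stirling and from inequalities like $a^{a-1} < (a-1)^{a-1/2}$, and I would verify at the end that $d^2-d+4$ indeed exceeds whatever absolute thresholds ($m > 8$, $m > e^2$, etc.) appear along the way.
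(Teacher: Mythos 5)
Your reduction goes in the wrong direction, and this is a fatal gap rather than a bookkeeping issue. After invoking Lemma \ref{lem:Psi(d,k)d-2NonDecreasing}, you claim it suffices to show $\frac{(m-1)!}{d!} > \dim\lp \mcm\lp 3;\psi(d+1,m-d-2)_{d-1} \rp \rp + m$, and you then set out to find an \emph{upper} bound on $\psi(d+1,m-d-2)_{d-1}$. But if that inequality held, both terms in the maximum defining $\Phi(d+1,m-d-2)$ would be below $\frac{(m-1)!}{d!} \leq \Phi(d,m-d-1)$, and you would have proved $\Phi(d+1,m-d-2) < \Phi(d,m-d-1)$ — the exact opposite of the proposition. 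The correct sufficient condition (this is inequality (\ref{eqn:ss43-2}) of Remark \ref{rem:Outline for Section 4}, and the first step of the paper's proof) is $\frac{(m-1)!}{d!} < \dim\lp \mcm\lp 3;\psi(d+1,m-d-2)_{d-1} \rp \rp + m$: the factorial term of $\Phi(d,m-d-1)$ must be dominated by the moduli term of $\Phi(d+1,m-d-2)$, while its moduli term is handled by Lemma \ref{lem:Psi(d,k)d-2NonDecreasing}. Accordingly one needs a \emph{lower} bound on $\psi(d+1,m-d-2)_{d-1}$, which is precisely what $\Omega$ supplies (Remark \ref{rem:Bounding Properties of Omega and omega} and Lemma \ref{lem:ExplicitFormOfOmega}); your proposed tower of ceiling-laden upper bounds on $\psi$, and your worry about the factor $2$ in $\psi_{d-1}=2\psi_{d-2}+1$, are aimed at the wrong target and are not needed at all.

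The reversal also shows up in how you read the hypothesis. The condition $m^2 - \frac{5}{2}m + \frac{1}{2} < (d+1) + \log(d)\lp d+\frac{1}{2}\rp + 6(d-3)!\lp d-2-\log(d-1)\rp$ is an \emph{upper} bound on $m$ (roughly $m \lesssim \sqrt{6(d-3)!}$): it guarantees that $\frac{(m-1)!}{d!}$ is still small compared with $\dim\lp \mcm\lp 3;\psi(d+1,m-d-2)_{d-1}\rp\rp \geq \frac{1}{7}\Omega^3 \approx \mfc_d^3\, m^{3(d-2)!}$, since a cubed power of $m$ with exponent of factorial size crushes $(m-1)!$ precisely when $m$ is not too large relative to $d$. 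In your scheme the roles are flipped: you would need the factorial to \emph{exceed} a quantity of that size, which forces $m$ large relative to $d$ and is false throughout the range the hypothesis permits, so the comparison you intend to close via Stirling cannot succeed. The $m^2$ in the hypothesis is the trace of bounding $\lp m-\frac{1}{2}\rp\log(m-1)$ by $\lp m-\frac{1}{2}\rp(m-1)$ on the side that must stay \emph{small}, and the $6(d-3)!$ term comes from $3\log\lp\mfc_d\rp$ via Lemma \ref{lem:BoundingMfcd} together with the $m \geq d^2-d+4$ bound on $3(d-2)!\log(m-d-2)$, not from any factor of $8$ or accumulated ceiling slack.
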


\begin{proof} In light of Lemma \ref{lem:Psi(d,k)d-2NonDecreasing}, it suffices to have
	\begin{equation}\label{eqn:omega1}
		\frac{(m-1)!}{d!} < \dim\lp \mcm(3,\psi(d+1,m-d-2)_{d-1} \rp + m.
	\end{equation}
	
	\noindent
	First, we approximate $\psi(d+1,m-d-2)_{d-1}$ below by $\left\lceil \Omega(d+1,m-d-2) \right\rceil$ to get
	\begin{equation}
	\dim\lp \mcm\lp 3,\psi(d+1,m-d-2)_{d-1} \rp \rp + m \\
		\geq \dim\lp \mcm\lp 3,\left\lceil \Omega(d+1,m-d-2) \right\rceil \rp \rp + m. 
	\end{equation}
	
	\noindent
	Observe that
	\begin{align*}
		&\dim\lp \mcm\lp 3,\left\lceil \Omega(d+1,m-d-2) \right\rceil \rp \rp + m \\
		&= \frac{1}{6}\lp \left\lceil \Omega(d+1,m-d-2) \right\rceil^3 + 6\left\lceil \Omega(d+1,m-d-2) \right\rceil^2 + 11\left\lceil \Omega(d+1,m-d-2) \right\rceil + 6 \rp\\
		&\geq \frac{1}{6}\lp \Omega(d+1,m-d-2)^3 + 6\Omega(d+1,m-d-2)^2 + 11\Omega(d+1,m-d-2) + 6 \rp\\
		& \ - \lp \Omega(d+1,m-d-2)+1 \rp^2 + m,\\
		&= \frac{1}{6}\Omega(d+1,m-d-2)^3 - \frac{1}{6}\Omega(d+1,m-d-2)+m.
	\end{align*}
	
	\noindent
	Since $d \geq 6$ and $m \geq d^2-d+4 \geq 34$, it follows that
	\begin{align*}
		\Omega(d+1,m-d-2) \geq \Omega(6,m-8) \geq \Omega(6,26) > 5.6 \times 10^{12},
	\end{align*}
	
	\noindent
	and consequently
	\begin{equation}\label{eqn:omega2}
		\frac{1}{6}\Omega(d+1,m-d-2)^3 - \frac{1}{6}\Omega(d+1,m-d-2)+m > \frac{1}{7}\Omega(d+1,m-d-2)^3.
	\end{equation}
	
	\noindent
	Substituting inequality (\ref{eqn:omega2}) into inequality (\ref{eqn:omega1}) and re-arranging, we have the sufficient criterion
	\begin{align*}
		\frac{7(m-1)!}{d!} < \Omega(d+1,m-d-2)^3 = \mfc_d^3  (m-d-2)^{3(d-2)!}.
	\end{align*}
	
	\noindent
	Next, we apply Stirling's approximations (equation \ref{eqn:Stirling's Approximation})and obtain
	\begin{align*}
		\frac{ 7(m-1)^{m-\frac{1}{2}}e^{2-m} }{ \sqrt{2\pi}d^{d+\frac{1}{2}}e^{-d} } < \mfc_d^3(m-d-2)^{3(d-2)!},
	\end{align*}
	
	\noindent
	which we re-arrange as
	\begin{equation}\label{eqn:omega3}
		\frac{ (m-1)^{m-\frac{1}{2}} }{ e^m(m-d-2)^{3(d-2)!} } < \frac{ \sqrt{2\pi} }{ 7 } \cdot e^{d-2} \cdot d^{d+\frac{1}{2}} \cdot \mfc_d^3.
	\end{equation}
	
	We take $\log$ of both sides of inequality (\ref{eqn:omega3}) and examine them individually. First, the right side of inequality (\ref{eqn:omega3}) becomes
	\begin{equation}\label{eqn:omega4}
		\log\lp \frac{ \sqrt{2\pi} }{7} \rp + (d+2) + \lp d+\frac{1}{2} \rp\log(d) + 3\log\lp \mfc_d \rp.
	\end{equation}
	
	\noindent
	By applying Lemma \ref{lem:BoundingMfcd} to $\log\lp \mfc_d \rp$, observing $\log\lp \frac{\sqrt{2\pi}}{7} \rp \geq -1$, and simplifying, it suffices to replace expression (\ref{eqn:omega4}) with
	\begin{equation}\label{eqn:omega5}
		(d+1) +  \lp d+\frac{1}{2} \rp\log(d) + 6(d-2)! - 6(d-2)!\log(d-1) - 6(d-3)!\log(d-1).
	\end{equation}
	
	\noindent
	The left side of inequality (\ref{eqn:omega3}) becomes
	\begin{equation}\label{eqn:omega6}
		\lp m - \frac{1}{2} \rp \log(m-1) - m - 3(d-2)!\log(m-d-2).
	\end{equation}
	
	\noindent
	For $m \geq d^2-d+4$,
	\begin{equation*}
		\log(m-d-2) > \log\lp d^2-2d+1 \rp = 2\log(d-1),
	\end{equation*}
	
	\noindent
	and multiplying by $3(d-2)!$ yields
	\begin{equation}\label{eqn:omega7}
		3(d-2)!\log(m-d-2) > 6(d-2)!\log(d-1).
	\end{equation}
	
	By combining expressions (\ref{eqn:omega5}) and (\ref{eqn:omega6}) with inequality (\ref{eqn:omega7}), we obtain
	\begin{align*}
		\lp m - \frac{1}{2} \rp \log(m-1) - m < (d+1) + \lp d+\frac{1}{2} \rp\log(d) + 6(d-2)! - 6(d-3)!\log(d).
	\end{align*}
	
	Using the approximation $\log(m-1) < m-1$, we finally arrive at the condition
	\begin{align*}
		m^2 - \frac{5}{2}m + \frac{1}{2} < (d+1) + \lp d+\frac{1}{2} \rp\log(d) + 6(d-3)! \lp d-2 - \log(d-1) \rp,
	\end{align*}
	
	\noindent
	as claimed above.
\end{proof}

Using the simple approximations $\log(d) \geq \log(6) > 1$ and $d-2-\log(d-1) > 1$, we arrive at a simplified condition.

\begin{corollary}\label{cor:TheSimplifiedOmegaCondition} \textbf{(The Simplified $\Omega$ Condition)}\\
	Fix $d \geq 6$. For any $m \geq d^2-d+4$ such that
	\begin{align*}
		m^2 - \frac{5}{2}m \leq 6(d-3)! + 2d + 1,
	\end{align*}
	
	\noindent
	it follows that
	\begin{align*}
		\Phi(d,m-d-1) < \Phi(d+1,m-d-2).
	\end{align*}
	
	\noindent
	Moreover,
	\begin{align*}
		F(m) \geq \frac{(m-1)!}{d!}.
	\end{align*}
\end{corollary}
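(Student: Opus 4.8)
The plan is to deduce the first assertion directly from Proposition \ref{prop:TheOmegaCondition} and then bootstrap to the statement about $F(m)$. For the first assertion I would note that, since $d \geq 6$, we have $\log(d) \geq \log(6) > 1$, so $\log(d)\lp d + \tfrac{1}{2} \rp > d + \tfrac{1}{2}$; also $x \mapsto x - 2 - \log(x-1)$ is increasing and already exceeds $1$ at $x = 6$, so $6(d-3)!\lp d - 2 - \log(d-1) \rp > 6(d-3)!$. Hence the right-hand side of the hypothesis of Proposition \ref{prop:TheOmegaCondition} strictly exceeds $(d+1) + \lp d + \tfrac{1}{2} \rp + 6(d-3)! = 2d + \tfrac{3}{2} + 6(d-3)!$, so the assumed inequality $m^2 - \tfrac{5}{2}m \leq 6(d-3)! + 2d + 1$ forces
\begin{align*}
	m^2 - \tfrac{5}{2}m + \tfrac{1}{2} &\leq 6(d-3)! + 2d + \tfrac{3}{2}\\
	&< (d+1) + \log(d)\lp d + \tfrac{1}{2} \rp + 6(d-3)!\lp d - 2 - \log(d-1) \rp,
\end{align*}
which is exactly the hypothesis of Proposition \ref{prop:TheOmegaCondition}; that proposition then yields $\Phi(d, m-d-1) < \Phi(d+1, m-d-2)$.

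For the bound on $F(m)$, I would start from Definition \ref{def:Wolfsons Functions}, which gives $F(m) = 2\lfloor \tfrac{1}{2}M \rfloor + 1$ with $M := \min\limits_{1 \leq d' \leq m-2} \Phi(d', m-d'-1)$, and observe that $2\lfloor \tfrac{1}{2}M \rfloor + 1 \geq M$ for every positive integer $M$; it therefore suffices to prove $M \geq \frac{(m-1)!}{d!}$. Since $\Phi(d', k) \geq \frac{(d'+k)!}{(d')!}$ by definition, we have $\Phi(d', m-d'-1) \geq \frac{(m-1)!}{(d')!}$ for every $d'$, so the bound is automatic for $1 \leq d' \leq d$. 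For $d' > d$ the plan is to propagate the first assertion: the hypotheses $m \geq (d'')^2 - d'' + 4$ and $m^2 - \tfrac{5}{2}m \leq 6(d''-3)! + 2d'' + 1$ hold for all $d''$ with $d \leq d'' \leq d'$ as soon as $m \geq (d')^2 - d' + 4$ (the right-hand sides are non-decreasing in $d''$, and $x \mapsto x^2 - x + 4$ is increasing), so the resulting chain $\Phi(d, m-d-1) \leq \cdots \leq \Phi(d', m-d'-1)$ gives $\Phi(d', m-d'-1) \geq \Phi(d, m-d-1) \geq \frac{(m-1)!}{d!}$; for the remaining large $d'$ with $m < (d')^2 - d' + 4$, I would instead use a crude lower bound on $\dim\lp \mcm\lp 3; \psi(d', m-d'-1)_{d'-2} \rp \rp$, which grows at least factorially in the number $d'-2$ of iterations defining $\psi(d', m-d'-1)_{d'-2}$ and hence easily exceeds $\frac{(m-1)!}{d!}$. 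Combining the cases gives $M \geq \frac{(m-1)!}{d!}$, hence $F(m) \geq \frac{(m-1)!}{d!}$.

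I expect the main obstacle to be the second assertion rather than the first: the first is a one-line substitution using the stated approximations, but bounding the \emph{minimum} defining $F(m)$ over the full range $1 \leq d' \leq m-2$ requires care — one must verify that the corollary's hypotheses are inherited as $d''$ grows and, separately, handle the tail $d' \gtrsim \sqrt{m}$ where that inheritance fails, via an honest (if crude) estimate on the iterated function $\psi$ and the moduli-space dimension. Both are variations on the Stirling-type computations already used for Proposition \ref{prop:TheOmegaCondition}, so they should not require a genuinely new idea.
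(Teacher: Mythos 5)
Your derivation of the first assertion is correct and is exactly the paper's route: the simplified hypothesis plus the elementary bounds $\log(d)\geq\log(6)>1$ and $d-2-\log(d-1)>1$ feed directly into Proposition \ref{prop:TheOmegaCondition}. The treatment of $d'\leq d$ in the second assertion (via $\Phi(d',m-d'-1)\geq\frac{(m-1)!}{(d')!}\geq\frac{(m-1)!}{d!}$ and $2\lfloor\tfrac12 M\rfloor+1\geq M$) is also fine.

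The gap is in your handling of the range $d'>d$. Your chaining of the corollary through intermediate values $d''$ requires $m\geq (d'')^2-d''+4$ at every step, and this can fail immediately: at the minimal allowed value $m=d^2-d+4$ one already has $m<(d+1)^2-(d+1)+4$, so the chain does not advance past $d$ at all and essentially the entire range $d<d'\leq m-2$ falls into your ``crude bound'' tail. For that tail, the justification you offer — that $\dim\lp\mcm\lp 3;\psi(d',m-d'-1)_{d'-2}\rp\rp$ ``grows at least factorially in the number $d'-2$ of iterations and hence easily exceeds $\frac{(m-1)!}{d!}$'' — does not suffice as stated: the target $\frac{(m-1)!}{d!}$ is factorial in $m$, and since $m\geq d^2-d+4$ the tail begins around $d'\approx\sqrt{m}$, where ``factorial in $d'-2$'' is of order $(\sqrt{m})!$, vastly smaller than $\frac{(m-1)!}{d!}$. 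The iterates of $\psi$ do in fact grow much faster than factorially in $d'-2$, but making that honest would require a quantitative lower bound on the iteration for general $d'$ (an analogue of Lemmas \ref{lem:ExplicitFormOfOmega}--\ref{lem:BoundingMfcd}), which is not the routine step you suggest. The paper avoids all of this with Lemma \ref{lem:Psi(d,k)d-2NonDecreasing}: $\psi(d',m-d'-1)_{d'-2}$, and hence $\dim\lp\mcm\lp 3;\psi(d',m-d'-1)_{d'-2}\rp\rp$, is non-decreasing in $d'$ over the whole range $2\leq d'\leq m-2$ with no hypotheses on $m$ relative to $d'$; combining this monotonicity with the single inequality $\frac{(m-1)!}{d!}<\dim\lp\mcm\lp 3;\psi(d+1,m-d-2)_{d-1}\rp\rp+m$ coming from Proposition \ref{prop:TheOmegaCondition} (applied once, at $d$, where the corollary's hypotheses do hold) gives $\Phi(d',m-d'-1)>\frac{(m-1)!}{d!}$ for every $d'>d$, and hence the bound on $F(m)$ from Definition \ref{def:Wolfsons Functions}. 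Replacing your chaining-plus-crude-estimate by this appeal to Lemma \ref{lem:Psi(d,k)d-2NonDecreasing} closes the gap.
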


\begin{proof} Together, Lemma \ref{lem:Psi(d,k)d-2NonDecreasing} and Proposition \ref{prop:TheOmegaCondition} yield that
	\begin{align*}
		\Phi(d,m-d-1) < \Phi(d',m-d'-1)
	\end{align*}
	
	\noindent
	for each $d < d' < m-2$. For any $d'' < d$, we have
	\begin{align*}
		\frac{(m-1)!}{d!} < \frac{(m-1)!}{(d'')!} \leq \Phi(d'',m-d''-1).
	\end{align*}
	
	\noindent
	Hence,
	\begin{align*}
		F(m) \geq 2\left\lfloor \frac{1}{2}\Phi(d,m-d-1) \right\rfloor+1 \geq \frac{(m-1)!}{d!}.
	\end{align*}
\end{proof}

\noindent
We state and prove a corollary, then we recall and prove Theorem \ref{thm:ComparisonWithF}.

\begin{corollary}\label{cor:Bounding the Ratio F(m)/G(m)}  \textbf{(Bounding the Ratio $\frac{F(m)}{G(m)})$}\\
For $d \geq 11$ and $m \geq 2d^2+11d+15$, $	\frac{F(m)}{G(m)} > d+1.$
\end{corollary}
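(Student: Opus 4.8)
The plan is to trap $F(m)$ from below and $G(m)$ from above between two consecutive ``factorial thresholds'' $\frac{(m-1)!}{D!}$ and $\frac{(m-1)!}{(D+1)!}$, for a suitably chosen integer $D \geq d$, which immediately forces $\frac{F(m)}{G(m)} > D+1 \geq d+1$. So fix $d \geq 11$ and $m \geq 2d^2+11d+15$, and let $D$ be the largest integer with $m \geq 2D^2+11D+15$; since $m \geq 2d^2+11d+15$ we get $D \geq d \geq 11$. The key algebraic observation is the identity
\begin{equation*}
	2D^2+11D+15 = 2(D+1)^2 + 7(D+1) + 6,
\end{equation*}
so that Corollary~\ref{cor:UpperBoundOnGrowthRateofG}, applied with parameter $D+1 \geq 4$, yields $G(m) < \frac{(m-1)!}{(D+1)!}$.

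For the matching lower bound on $F(m)$, I would invoke Corollary~\ref{cor:TheSimplifiedOmegaCondition} with parameter $D$. Its hypotheses $D \geq 6$ and $m \geq D^2-D+4$ follow at once from $D \geq 11$ and $m \geq 2D^2+11D+15$, so the only real content is checking $m^2 - \frac{5}{2}m \leq 6(D-3)! + 2D + 1$. Here the maximality of $D$ is crucial: it gives $m < 2(D+1)^2 + 11(D+1) + 15 = 2D^2 + 15D + 28$, and for $D \geq 11$ one has $2D^2 + 15D + 28 \leq 4D^2$ (equivalently $2D^2 - 15D - 28 \geq 0$, valid once $D \geq 10$), whence $m^2 - \frac{5}{2}m < m^2 < 16D^4$. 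It therefore suffices to establish the elementary factorial estimate $16D^4 \leq 6(D-3)!$ for all $D \geq 11$: the base case $D = 11$ is the (tight) inequality $234256 \leq 241920$, and the inductive step reduces to $(1+1/D)^4 \leq D-2$, which holds for $D \geq 11$ since the left side is at most $(12/11)^4 < 9$ and is decreasing in $D$. Corollary~\ref{cor:TheSimplifiedOmegaCondition} then gives $F(m) \geq \frac{(m-1)!}{D!}$.

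Combining the two bounds,
\begin{equation*}
	\frac{F(m)}{G(m)} > \frac{(m-1)!/D!}{(m-1)!/(D+1)!} = D+1 \geq d+1,
\end{equation*}
where the first inequality is strict because Corollary~\ref{cor:UpperBoundOnGrowthRateofG} furnishes a strict upper bound for $G(m)$. The one genuinely delicate point is the factorial estimate $16D^4 \leq 6(D-3)!$, whose base case at $D = 11$ is barely true; everything else is threshold bookkeeping---in particular confirming that the largest $D$ with $m \geq 2D^2+11D+15$ still lies inside the window $m^2 - \frac{5}{2}m \leq 6(D-3)! + 2D + 1$ needed to invoke Corollary~\ref{cor:TheSimplifiedOmegaCondition}.
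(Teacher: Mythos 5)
Your proposal is correct and follows essentially the same route as the paper: both arguments trap $G(m)$ strictly below $\frac{(m-1)!}{(D+1)!}$ via Corollary \ref{cor:UpperBoundOnGrowthRateofG} and $F(m)$ at or above $\frac{(m-1)!}{D!}$ via Corollary \ref{cor:TheSimplifiedOmegaCondition}, the paper covering $m \geq 2d^2+11d+15$ by overlapping windows $[m_{d'+1}, M_{d'}]$ while you equivalently select the maximal admissible $D$ for each $m$. Your explicit factorial estimate $16D^4 \leq 6(D-3)!$ in fact substantiates the numerical window-compatibility claim that the paper only asserts, so the write-up is if anything more complete on that point.
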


\begin{remark}
We expect that better estimates of $\frac{F(m)}{G(m)}$ could reasonably be obtained. However, Corollary \ref{cor:Bounding the Ratio F(m)/G(m)} suffices to prove Theorem \ref{thm:ComparisonWithF}, which establishes that $G(m)$ is the better bounding function and thus we do not need additional data on the growth rate of $F(m)$.
\end{remark}

\begin{proof} \textbf{(Proof of Corollary \ref{cor:Bounding the Ratio F(m)/G(m)})}\\
Let $d \geq 11$. Recall that Corollary \ref{cor:UpperBoundOnGrowthRateofG} applies for $m \geq 2d^2+7d+6$ and so we set $m_d = 2d^2 + 7d + 6$. Similarly, Corollary \ref{cor:TheSimplifiedOmegaCondition} applies for $m \geq d^2-d+4$ such that
	\begin{align*}
		m^2 - \frac{5}{2}m \leq 6(d-3)!+2d+1.	
	\end{align*}
	
	\noindent
	Correspondingly, we set
	\begin{align*}
		M_d = \max\lb m \in \ZZ \st m^2 - \frac{5}{2}m \leq 6(d-3)!+2d+1 \rb.
	\end{align*}
	
	\noindent
	 Observe that $2d^2+7d+6 \geq d^2-d+4$ and, since $d \geq 11$, we have
	\begin{align*}
		m_{d+1} = 2d^2+11d+15 < M_d.
	\end{align*}		 
	 
	Corollary \ref{cor:UpperBoundOnGrowthRateofG} yields $G(m_{d+1}) < \frac{(m_d-1)!}{(d+1)!}$ and Corollary \ref{cor:TheSimplifiedOmegaCondition} yields that $F(m_{d+1}) \geq \frac{(m_d-1)!}{d!}$. As a result, we have
	\begin{align*}
		\frac{F(m_{d+1})}{G(m_{d+1})} > \frac{ \lp \frac{(m_d-1)!}{d!} \rp }{ \lp \frac{(m_d-1)!}{(d+1)!} \rp} = \frac{(d+1)!}{d!} = d+1.
	\end{align*}
	
	\noindent
	In fact,	
	\begin{align*}
		\lp 6(d-3)! + 2d + 1 \rp - \lp m_{d+2}^2 - \frac{5}{2}m_{d+2} \rp
	\end{align*}
	
	\noindent
	is positive and strictly increasing for $d \geq 11$, so $m_{d+2} < M_d$.  Hence, Corollaries \ref{cor:UpperBoundOnGrowthRateofG} and \ref{cor:TheSimplifiedOmegaCondition} yield that
	\begin{align*}
		\frac{F(m)}{G(m)} > \frac{ \lp \frac{(m-1)!}{d!} \rp }{ \lp \frac{(m-1)!}{(d+1)!} \rp} = d+1.
	\end{align*}
	
	\noindent
	for all $m \geq m_d = 2d^2 + 7d + 6$.
\end{proof}

\begin{proof} \textbf{(Proof of Theorem \ref{thm:ComparisonWithF})}\\
	First, observe that Corollary \ref{cor:Bounding the Ratio F(m)/G(m)} implies that
	\begin{align*}
		\lim\limits_{m \ra \infty} \frac{F(m)}{G(m)} = \infty,
	\end{align*}	
	
	\noindent
	and that $G(m) \leq F(m)$ for
	\begin{align*}
		m \geq 2\lp 11^2 \rp + 11(11) + 15 = 378.
	\end{align*}
	
	The verification of that $G(m) \leq F(m)$ for $1 \leq m \leq 59$ comes from explicit computation and the relevant data is provided in Appendices \ref{subsec:Appendix - Explicit Bounds} and \ref{subsec:Appendix - Explicit Approximations of F(m)/G(m)}.
	
	Finally, we address the cases of $60 \leq m \leq 377$. Recall that Lemma \ref{lem:UpperBoundOnVartheta} yields
	\begin{align*}
		\vartheta(d,m-d-1) \leq m-d-2 + \binom{m}{d}.
	\end{align*}
	
	For a fixed $d$, $\vartheta(m,d-m-1)$ is bounded above by a polynomial of degree $d$ in $m$ with positive coefficients. Remark \ref{rem:DimensionOfParameterAndModuliSpaces} yields that $\dim\lp \mcm(2,\dotsc,d;\vartheta(d,m-d-1) \rp$ is also a polynomial in $m$, hence there is a polynomial $p_d(m)$ with positive coefficients that bounds $\dim\lp \mcm(2,\dotsc,d;\vartheta(d,m-d-1) \rp$ above. Hence, there is a minimal positive integer $a_d$ such that $\frac{(a_d-1)!}{d!} > p_d(a_d)$. Moreover, $\frac{m!}{d!} > p_d(m)$ for all $m \geq a_d$. In particular,
	\begin{align*}
		G(m) \leq \frac{(m-1)!}{d!}
	\end{align*}
	
	\noindent
	for all $m \geq a_d$. \\
	
	We compute explicitly that for all $m \geq 57$, $\frac{(m-1)!}{9!} > \dim\lp \mcm(2,\dotsc,9;\vartheta(9,m-10) \rp$ and thus
	\begin{align*}
		G(m) \leq \frac{(m-1)!}{9!}.
	\end{align*}
	
	\noindent
	Additionally, we explicitly compute that for $m \leq 377$,
	\begin{align*}
		\frac{(m-1)!}{6!} < \dim\lp \mcm(3;\psi(7,m-d-1)_{5} \rp.
	\end{align*}
	
	\noindent
	and the same argument proving Corollary \ref{cor:TheSimplifiedOmegaCondition} yields that
	\begin{align*}
		F(m) \geq \frac{(m-1)!}{6!}.
	\end{align*}
	
	\noindent
	As a consequence,
	\begin{align*}
		\frac{F(m)}{G(m)} \geq \frac{ \frac{(m-1)!}{6!} }{ \frac{(m-1)!}{9!} } \geq \frac{9!}{6!} = 504
	\end{align*}
	
	\noindent
	for all $60 \leq m \leq 376$, which yields the theorem.	 
\end{proof}

%%%%%%%%%%%%%%%%%%%%%%%%%%%%%%%%%%%%%%%%%%%%%%%%%%%%%%
%%%%%%%%%%%%%%%%%%%%%%%%%%%%%%%%%%%%%%%%%%%%%%%%%%%%%%
%%%%%%%%%%%%%%%%%%%%%%%%%%%%%%%%%%%%%%%%%%%%%%%%%%%%%%
%%%%%%%%%%%%%%%%%%%%%%%%%%%%%%%%%%%%%%%%%%%%%%%%%%%%%%
%%%%%%%%%%%%%%%%%%%%%%%%%%%%%%%%%%%%%%%%%%%%%%%%%%%%%%
%%%%%%%%%%%%%%%%%%%%%%%%%%%%%%%%%%%%%%%%%%%%%%%%%%%%%%
%%%%%%%%%%%%%%%%%%%%%%%%%%%%%%%%%%%%%%%%%%%%%%%%%%%%%%

\section{Appendices}\label{sec:Appendices}

\subsection{Explicit Bounds on $\RD(n)$}\label{subsec:Appendix - Explicit Bounds}

Here we provide initial data on the behavior of $G(m)$ and provide data about $F(m)$ for comparison.

\begin{center}
Table 1: Upper Bounds on $\RD(n)$
\begin{tabular}{|c|c|c|c|c|}
\hline
$m$ &$G(m)$ &$F(m)$ &$F(m)/G(m)$ &First Established by\\
\hline
2 &3 &3 &1 &Babylonians \& Egyptians\\
3 &4 &4 &1 &Ferrari\\
4 &5 &5 &1 &Bring, in \cite{Bring1786}\\
5 &9 &9 &1 &Segre, in \cite{Segre1945}\\
\hline
6 &21 &41 &1.952 &Theorem \ref{thm:The n-6 Bound}, fixing the gap in the proof in \cite{Chebotarev1954}\\
\hline
7 &109 &121 &1.175 &\\
8 &325 &841 &2.645 &\\
9 &1681 &6721 & 3.998 &Theorem \ref{thm:The n-7,...,n-14 Bounds}\\
10 &15121 &60481 &4.000 &\\
11 &151,201 &604,801 &4.000 &\\
\hline
12 &1,663,201 &6,652,801 &4.000 &\\
13 &19,958,401 &78,485,043 &3.932 &Theorem \ref{thm:The n-7,...,n-14 Bounds} / Theorem \ref{thm:Determining A Point On Tau(d+k)}\\
14 &259,459,201 &320,082,459 &1.234 &\\
\hline
15 &3,632,428,801 &3,632,428,801 &1 &Wolfson, in \cite{Wolfson2021}\\
16 &54,486,432,001 &54,486,432,001 &1 &\\
\hline
17 &348,489,068,134 &871,782,912,001 &2.502 &Theorem \ref{thm:Determining A Point On Tau(d+k)}\\
18 &2,964,061,900,801 &14,820,309,504,001 &5 &\\
\hline
\end{tabular}
\end{center}

\noindent
When $m=12, 13, 14$, the methods of Theorems \ref{thm:The n-7,...,n-14 Bounds} and \ref{thm:Determining A Point On Tau(d+k)} both obtain the new bound on resolvent degree.

\newpage

\subsection{Explicit Approximations of $F(m)/G(m)$}\label{subsec:Appendix - Explicit Approximations of F(m)/G(m)}

Here we provide additional data about the ratio $\frac{F(m)}{G(m)}$. In particular, the behavior exhibited from $m=58$ to $m=59$ illustrates why the ratio $\frac{F(m)}{G(m)}$ is not always non-decreasing.

\begin{center}
Table 2: $F(m)/G(m)$ for $19 \leq m \leq 59$\\
\begin{tabular}{|c|c|c|c|}
\hline
$m$ &$F(m)/G(m)$ &$G(m)$ given by determining an &$F(m)$ given by determining an\\
\hline
19 &5.000 &  &\\
20 &5.000 &  &\\
21 &5.000 &$(m-6)$-plane on $\tau_{1,\dotsc,5}$ &$(m-5)$-plane on $\tau_{1,2,3,4}$\\
22 &5.000 &  &\\
23 &5.000 &  &\\
24 &5.000 &  &\\
\hline
25 &29.930  &  &\\
26 &30.000 & &\\
27 &30.000 & &\\
28 &30.000 & &\\
29 &30.000 &$(m-7)$-plane on $\tau_{1,\dotsc,6}$ &$(m-5)$-plane on $\tau_{1,2,3,4}$\\
30 &30.000 & &\\
31 &30.000 & &\\
32 &30.000 & &\\
33 &30.000 & &\\
\hline
34 &146.129 & &\\
35 &210.000 & &\\
36 &210.000 & &\\
37 &210.000 & &\\
38 &210.000 &$(m-8)$-plane on $\tau_{1,\dotsc,7}$ &$(m-5)$-plane on $\tau_{1,2,3,4}$\\
39 &210.000 & &\\
40 &210.000 & &\\
41 &210.000 & &\\
42 &210.000 & &\\
43 &210.000 & &\\
\hline
44 &294.103 & &\\
45 &1680.000 & &\\
46 &1680.000 & &\\
47 &1680.000 & &\\
48 &1680.000 & &\\
49 &1680.000 &$(m-9)$-plane on $\tau_{1,\dotsc,8}$ &$(m-5)$-plane on $\tau_{1,2,3,4}$\\
50 &1680.000 & &\\
51 &1680.000 & &\\
52 &1680.000 & &\\
53 &1680.000 & &\\
54 &1680.000 & &\\
55 &1680.000 & &\\
\hline
56 &2613.173 & &\\
57 &15120.000 &$(m-10)$-plane on $\tau_{1,\dotsc,9}$ &$(m-5)$-plane on $\tau_{1,2,3,4}$\\
58 &15120.000 & &\\
\hline
59 &3024.000 &$(m-10)$-plane on $\tau_{1,\dotsc,9}$ &$(m-6)$-plane on $\tau_{1,\dotsc,5}$\\
\hline
\end{tabular}
\end{center}

%%%%%%%%%%%%%%%%%%%%%%%%%%%%%%%%%%%%%%%%%%%%%%%%%%%%%%
%%%%%%%%%%%%%%%%%%%%%%%%%%%%%%%%%%%%%%%%%%%%%%%%%%%%%%
%%%%%%%%%%%%%%%%%%%%%%%%%%%%%%%%%%%%%%%%%%%%%%%%%%%%%%
%%%%%%%%%%%%%%%%%%%%%%%%%%%%%%%%%%%%%%%%%%%%%%%%%%%%%%
%%%%%%%%%%%%%%%%%%%%%%%%%%%%%%%%%%%%%%%%%%%%%%%%%%%%%%
%%%%%%%%%%%%%%%%%%%%%%%%%%%%%%%%%%%%%%%%%%%%%%%%%%%%%%
%%%%%%%%%%%%%%%%%%%%%%%%%%%%%%%%%%%%%%%%%%%%%%%%%%%%%%

\newpage

\subsection{Proof of Technical Lemma}\label{subsec:Appendix-ProofOfTechnicalLemma}

We now recall the statement of Lemma \ref{lem:TechnicalLemma} and provide a proof. We will continue to use the notation established in Definition \ref{def:PolarsOfAPolynomial}. \\

\noindent
\textbf{Lemma \ref{lem:TechnicalLemma}. (Technical Lemma)}\\
Let $P,Q \in \PP^r(K)$ and $f \in K[x_0,\dotsc,x_r]$ be a homogeneous polynomial of degree $d$. Applying a projective change of coordinates as necessary, we assume that 
	\begin{align*}
		P &= [1:p_1:\cdots:p_r],\\
		Q &= [1:q_1:\cdots:q_r],
	\end{align*}		
	
	\noindent
	so that the line determined by $P$ and $Q$ is
	\begin{align*}
		L(P,Q)(K) = \lb [1:\lambda p_1 + \mu q_1:\cdots:\lambda p_r + \mu q_r] \st [\lambda:\mu] \in \PP^1(K) \rb.
	\end{align*}
	
	\noindent
	For any point $R_{\lambda:\mu} = [1:\lambda p_1 + \mu q_1:\cdots:\lambda p_r + \mu q_r] \in L(P,Q)(K)$,
	\begin{equation*}
		f\lp R_{\lambda:\mu} \rp = f(\lambda P) + f(\mu Q) + \SL_{k=1}^{d-1} \frac{1}{k!} t(d-k,f,\lambda P)(\mu Q).
	\end{equation*}

\begin{proof}
	Set $p_0=1$ and $q_0=1$. Let $f \in K[x_0,\dotsc,x_r]$ be a homogeneous polynomial of degree $d$. When $r=1$, the claim follows immediately from the binomial formula. We thus assume $r \geq 2$. As partial derivatives are linear, it suffices to consider the case where $f$ is a monomial:
	\begin{align*}
		f(x_0,\dotsc,x_r) = a x_0^{i_0} x_1^{i_1} \cdots x_r^{i_r}.
	\end{align*}
	
	Note that
	\begin{align*}
		f\lp R_{\lambda:\mu} \rp &= a \PL_{j=0}^r \lp \lambda p_j + \mu q_j \rp^{i_j}\\
		&= a\PL_{j=0}^r \SL_{\ell_j=0}^{i_j} \binom{i_j}{\ell_j} (\lambda p_j)^{\iota(j)-\ell_j} (\mu q_j)^{\ell_j}\\
		&= a \SL_{\ell_0=0}^{i_0} \cdots \SL_{\ell_r=0}^{i_r} \lp \PL_{j=0}^r \binom{i_j}{\ell_j} (\lambda p_j)^{i_j-\ell_j} (\mu q_j)^{\ell_j} \rp.
	\end{align*}
	
	To simplify notation, we denote an indexing set
	\begin{align*}
		I = \lb (k_0,\dotsc,k_r) \in \ZZ^{r+1} \st 0 \leq k_j \leq i_j \rb
	\end{align*}
	
	and, in accordance with the notation of Definition \ref{def:PolarsOfAPolynomial}, partition it into subsets
	\begin{align*}
		I_k = \lb (k_0,\dotsc,k_r) \in \ZZ^{r+1} \st k_0+\cdots+k_r=k \rb
	\end{align*}
	
	\noindent
	for $0 \leq k \leq d$. Thus, we write
	\begin{align*}
		f\lp R_{\lambda:\mu} \rp &= a \SL_{I} \lp \PL_{j=0}^r \binom{i_j}{k_j} (\lambda p_j)^{\iota(j)-k_j} (\mu q_j)^{k_j} \rp\\
		&= a \SL_{k=0}^d \SL_{I_k} \lp \PL_{j=0}^r \binom{i_j}{k_j} (\lambda p_j)^{i_j-k_j} (\mu q_j)^{k_j} \rp
	\end{align*}
	
	\noindent
	Note that
	\begin{align*}
		f(\lambda P) &=  a \lp \PL_{j=0}^r (\lambda p_j)^{i_j} \rp = a\SL_{I_0} \lp \PL_{j=0}^r \binom{i_j}{0} (\lambda p_j)^{i_j-0} (\mu q_j)^{0} \rp, \\
		\\
		f(\mu Q) &= a \lp \PL_{j=0}^r (\mu q_j)^{i_j} \rp = a\SL_{I_d} \lp \PL_{j=0}^r \binom{i_j}{i_j} (\lambda p_j)^{i_j-i_j} (\mu q_j)^{i_j} \rp.
	\end{align*}
	
	\noindent
	Thus,
	\begin{align*}
		f\lp R_{\lambda:\mu} \rp - f(\lambda P) - f(\mu Q)
		&= a\SL_{k=1}^{d-1} \SL_{I_k} \lp \PL_{j=0}^r \binom{i_j}{l_j} (\lambda p_j)^{i_j-k_j} (\mu q_j)^{k_j} \rp.
	\end{align*}
	
	\noindent
	Consequently, it suffices to show that
	\begin{align*}
		\frac{1}{k!} t(d-k,f,\lambda P)(\mu Q) = a\SL_{I_k} \lp \PL_{j=0}^r \binom{i_j}{l_j} (\lambda p_j)^{i_j-k_j} (\mu q_j)^{k_j} \rp
	\end{align*}
	
	\noindent
	for each $1 \leq k \leq d-1$. Recall that
	\begin{align*}
	[k]^* &= \lb 1,\dotsc,k \rb,\\
	[r] &= \lb 0,1,\dotsc,r \rb,\\
	I_k^* &= \Hom_{\Set}([k]^*,[r])
	\end{align*}
	
	\noindent	
	and for $\iota \in I_k^*$, we set $|\iota|(j) = |\iota\inv(j)|$. Hence,
	\begin{align*}
		& \ \frac{1}{k!} t(d-k,f,\lambda P)(\mu Q) \\
		&= \frac{1}{k!} \SL_{\iota \in I_k^*} \lp \p_0^{|\iota|(0)} \cdots \p_r^{|\iota|(0)} f \rp \biggr\rvert_{\lambda P} \  (\mu q_0)^{|\iota|(0)}\cdots (\mu q_r)^{|\iota(r)|}\\
		&= \frac{1}{k!} \SL_{\iota \in I_k^*} a\frac{i_0}{(i_0-|\iota|(0))!} \cdots \frac{i_r}{(i_r - |\iota|(r))!} (\lambda p_0)^{i_0-|\iota|(0)} \cdots (\lambda p_r)^{i_r-|\iota|(r)}  (\mu q_0)^{|\iota|(0)}\cdots (\mu q_r)^{|\iota(r)|}\\
		&= \frac{a}{k!} \SL_{\iota \in I_k^*} \PL_{j=0}^r \frac{i_j}{(i_j-|\iota|(j))!}(\lambda p_j)^{i_j-|\iota|(j)} (\mu q_j)^{|\iota|(j)}
	\end{align*}
	
	\noindent
	Note that for any $\iota,\iota' \in I_k^*$ whose fibers at every point of $[r]$ have the same cardinality, we have
	\begin{align*}
		\p_0^{|\iota|(0)} \cdots \p_r^{|\iota|(0)} f = \p_0^{|\iota'|(0)} \cdots \p_r^{|\iota'|(0)} f
	\end{align*}
	
	\noindent
	as partial derivatives of polynomials commute. Now, given any $\iota \in I_k^*$, we can apply a permutation of the symmetric group $S_k$ to $[k]^*$ and not change the number of fibers of each cardinality. However, to get a unique representative of each unordered class of partial derivative, we must identify any permutations which fix all of the fibers, but permute the elements within any such fiber. Thus, to a given $\iota$, there are
	\begin{align*}
		\frac{\lav S_k \rav}{\lav S_{|\iota|(0)} \rav \cdots \lav S_{|\iota(r)|} \rav} = k! \PL_{l=0}^r \frac{1}{(|\iota|(j))!}
	\end{align*}
	
	\noindent
	permutations which give the same unordered partial derivative. Hence,
	\begin{align*}
	\frac{1}{k!} t(d-k,f,\lambda P)(\mu Q) &= \frac{a}{k!} \SL_{\iota \in I_k^*} \PL_{j=0}^r \frac{i_j}{(i_j-|\iota|(j))!}(\lambda p_j)^{i_j-|\iota|(j)} (\mu q_j)^{|\iota|(j)}\\
	&= \frac{a}{k!} \SL_{I_k} \lp k! \PL_{l=0}^r \frac{1}{(|\iota|(j))!} \rp \PL_{j=0}^r \frac{i_j}{(i_j-k_j)!}(\lambda p_j)^{i_j-k_j} (\mu q_j)^{k_j}\\
	&= a \SL_{I_k} \PL_{j=0}^r \binom{i_j}{k_j}(\lambda p_j)^{i_j-k_j} (\mu q_j)^{k_j}
	\end{align*}
	
	\noindent
	which yields the lemma.
\end{proof}

%%%%%%%%%%%%%%%%%%%%%%%%%%
%%%%%%%%%%%%%%%%%%%%%%%%%%
%%%%%%%%%%%%%%%%%%%%%%%%%%
%%%%%%%%%%%%%%%%%%%%%%%%%%
%%%%%%%%%%%%%%%%%%%%%%%%%%

%\bibliographystyle{amsplain}

\end{document}